\documentclass[11pt]{amsart}
\usepackage{geometry}
\usepackage{amsmath,amssymb,amsthm,mathtools, xcolor}
\colorlet{RED}{red}
\usepackage{hyperref}
\usepackage{tikz}

\def\R{\mathbb{R}}

\def\Ss{\mathbb{S}}
\def\Co{\mathbb{C}}
\def\Hc{\mathcal{H}}
\def\Lc{\mathcal{L}}
\def\Gg{\mathfrak{G}}
\def\Z{\mathbb{Z}}

\numberwithin{equation}{section}

\newtheorem{theorem}{Theorem}[section]

\newtheorem{definition}[theorem]{Definition}
\newtheorem{remark}[theorem]{Remark}
\newtheorem{proposition}[theorem]{Proposition}
\newtheorem{lemma}[theorem]{Lemma}

\renewcommand{\d}{\,{\rm d}}

\newcommand{\al}{\alpha}
\newcommand{\la}{\lambda}

\usepackage{fullpage} 
\usepackage{setspace}
\usepackage{adjustbox}

\mathtoolsset{showonlyrefs}

\title{Gaussians Do Not Always Maximize Mixed-Norm \\ Strichartz Inequalities for the Schrödinger Equation}

\author{Felipe Gon\c{c}alves}
\address{
IMPA -- Instituto de Matem\'atica Pura e Aplicada, Estrada Dona Castorina 110, Rio de Janeiro, RJ 22460-320, Brazil.}
\email{goncalves@impa.br}

\author{Giuseppe Negro}
\author{Diogo Oliveira e Silva}
\address{
Center for Mathematical Analysis, Geometry and Dynamical Systems \&
Departamento de Matem\'atica\\
Instituto Superior T\'ecnico\\
Universidade de Lisboa\\
Av. Rovisco Pais\\
1049-001 Lisboa, Portugal.}
\email{giuseppe.negro@tecnico.ulisboa.pt}
\email{diogo.oliveira.e.silva@tecnico.ulisboa.pt}

\subjclass[2020]{42B10}
\keywords{Sharp restriction theory, Strichartz inequality,   Schrödinger equation, paraboloid, gaussian, local maximizers, stability, spectral gap, lens transform, Laguerre polynomials, spherical harmonics, heat flow.}

\begin{document}

\begin{abstract}
We investigate the  maximization problem for the  family of mixed-norm Strichartz inequalities for the  Schrödinger equation,
$\|e^{-it\Delta/2}f\|_{L_t^qL_{\boldsymbol{x}}^r(\R^{1+d})}\le C_{q, r}\lVert f\rVert_{L^2(\mathbb R^d)}$, with
\[
\frac2q+\frac dr= \frac d2,
     \qquad
     q,r\geq 2,
     \qquad
   \text{and thus } \qquad r \leq \frac{2d}{d-2} \ \ (\text{if } d\geq 3).
 \]
We show that, in low dimensions $1\le d\le 5$, the thresholds  
\begin{equation}
\rho_1=10, \quad
\rho_2=6, \quad
\rho_3=4\sqrt{7}-6 \, \approx 4.583, 
\quad
\rho_4 = 2\sqrt{15}-4 \, \approx 3.746, 
\quad
\rho_5=\frac{10}{3}    \, \approx 3.333
\end{equation}
are such that 
gaussians are stable local  maximizers for $2<r<\rho_d$,  and fail to be local maximizers
for $\rho_d < r \leq \frac{2d}{d-2}$ (with the conventions there is no upper bound on $r$ when $d\in\{1,2\}$ and that $r=\infty$ is excluded when $d=2$).  In the cases $(q,r,d)\in\{(6,6,1),(8,4,1),(4,4,2)\}$, we establish global stability inequalities with effective stability constants. Both proofs hinge on spectral gaps which we compute exactly.
\end{abstract}

\maketitle

\section{Introduction}
The mixed-norm  Strichartz inequality for the free Schr\"odinger equation,
\begin{equation}\label{eq_freeSchr}
    iu_t=\frac12\Delta u, \quad u|_{t=0}=f,
\end{equation}
asserts the finiteness of the operator norm
\begin{equation}\label{eq_Strichartz}
{\bf S}_{q,r}:=   \sup_{0\neq f\in L^2(\R^d)} \frac{\|e^{-it\Delta/2}f\|_{L_t^qL_{\boldsymbol x}^r(\R^{1+d})}}{\|f\|_{L^2(\R^d)}} <\infty
\end{equation}
whenever the Lebesgue exponents $1\leq q,r\leq\infty$ and the spatial dimension $d\geq 1$ satisfy
\begin{equation}\label{eq_admissible}
    \frac2q+\frac dr=\frac d2,
    \qquad
     q,r\geq 2,
\end{equation}
with the usual exclusion of the endpoint $(q,r,d)=(2,\infty,2)$.
When  $q=r$, this was established in foundational work of Strichartz \cite{St77} drawing the connection to the restriction theorems of Stein \cite{St93} and Tomas \cite{To75}. The mixed-norm nonendpoint case, including $(q,r,d)=(4,\infty,1)$, was then addressed by Ginibre--Velo \cite{GV92} and Yajima \cite{Ya87}, while the endpoint $(q,r)=(2,\frac{2d}{d-2})$ was later established in dimensions $d\geq 3$ by Keel--Tao \cite{KT98}; see also \cite[\S 2.3, Theorem 2.3]{Tao06} for the precise formulation given here. The remaining endpoint $(q,r)=(\infty,2)$ follows directly from unitarity. Pairs $(q,r)$ for which \eqref{eq_admissible} holds are called {\it Schrödinger-admissible}.

We are interested in sharp and sharpened forms of the Strichartz inequality \eqref{eq_Strichartz}.
In view of Lieb's well-known principle that {\it gaussian kernels have only gaussian maximizers} \cite{Li90}, it is tempting to conjecture that gaussians maximize \eqref{eq_Strichartz} for all Schrödinger-admissible pairs. This conjecture was explicitly proposed in the pure norm case $q=r$ by Hundertmark--Zharnitsky  \cite[Conjecture 1.7]{HZ06} and in the mixed-norm case $q\neq r$ by Gonçalves  \cite[Conjecture 1]{Go19}.  Very recently, further numerical evidence for the general conjecture  was provided via a simple neural-network-based pipeline in \cite{FMV26}.

 In the present context, {\it gaussians} are the elements of the family
\begin{equation}\label{eq_gaussian_family_intro}
    \Gg
    :=
    \left\{
    c\,e^{-z|\boldsymbol x-\boldsymbol x_0|^2}e^{i\boldsymbol x\cdot \boldsymbol\xi_0}
    :
    c\in\Co\setminus\{0\},\;
    \Re z>0,\;
    \boldsymbol x_0,\boldsymbol\xi_0\in\R^d
    \right\}\subset L^2(\R^d).
\end{equation}
In Appendix~\ref{app_symmetry}, we explain  why  $\Gg$ is a manifold of real dimension $2d+4$  which coincides with the family generated from the standard gaussian in $\R^d$,
\begin{equation}\label{eq_stdgaussian}
g_0(\boldsymbol x):=e^{-|\boldsymbol x|^2/2},    
\end{equation}
by the symmetries of the  Schrödinger equation \eqref{eq_freeSchr}.
We  define the deficit functional $\delta_{q,r}$,
\begin{equation}\label{eq:two_homog_deficit}
    \delta_{q,r}[f]
    :=
    \frac{\|e^{-it\Delta/2}g_0\|_{L_t^qL_{\boldsymbol x}^r(\R^{1+d})}^2}
         {\|g_0\|_{L^2(\R^d)}^2}
    \|f\|_{L^2(\R^d)}^2
    -
    \|e^{-it\Delta/2}f\|_{L_t^qL_{\boldsymbol x}^r(\R^{1+d})}^2,
\end{equation}
where $g_0$ is given by \eqref{eq_stdgaussian}.
Observe that $\delta_{q,r}[g_0]=0$, and that $\delta_{q,r}[f]\geq 0$ for every $f\in L^2$ if and only if gaussians maximize \eqref{eq_Strichartz}. We further define the distance to the manifold $\Gg$ as follows:
\begin{equation}\label{eq_dist}
    \textup{dist}(f,\Gg):=\inf_{g\in\Gg}\|f-g\|_{L^2(\R^d)}.
\end{equation}

\begin{definition}
Let $(q,r)$ be a Schrödinger-admissible pair. 
Gaussians are said to be \emph{local maximizers} of
\eqref{eq_Strichartz} if there exists $\varepsilon>0$ such that
$\delta_{q,r}[f]\geq 0$
whenever \begin{equation}\label{eq_closetogaussians}
\textup{dist}(f,\Gg)<\varepsilon\|f\|_{L^2(\R^d)}.    
\end{equation} 
Furthermore, we say  that gaussians are \emph{stable local maximizers} of
\eqref{eq_Strichartz} if there exist $c,\varepsilon>0$ such that
$\delta_{q,r}[f] \geq  c\, \textup{dist}^2(f,\Gg)$
whenever \eqref{eq_closetogaussians} holds.
\end{definition}

\subsection{Main results}
The present paper disproves, in certain ranges of exponents, the conjecture that gaussians always maximize mixed-norm Strichartz inequalities for the Schrödinger equation. Our work also leaves open the possibility that gaussians are global extremizers in the remaining range.
In particular, our first main result identifies the precise  threshold at which gaussians cease to be stable local maximizers in low dimensions.
The precise statement hinges on the following thresholds: 
\begin{equation}
\rho_1=10, \quad
\rho_2=6, \quad
\rho_3=4\sqrt{7}-6, 
\quad
\rho_4 = 2\sqrt{15}-4, 
\quad
\rho_5=\frac{10}{3}.    
\end{equation}

\begin{theorem}\label{thm_main}
 Let $(q,r)$ be a Schrödinger-admissible pair. Then: 
\begin{itemize}
\item[(a)] If $1\leq d\leq 5$ and $2<r<\rho_d$, or if $d\geq 6$ and $2<r\leq\frac{2d}{d-2}$, then gaussians are stable local maximizers of \eqref{eq_Strichartz}.
\item[(b)] 
If $d=1$ and $\rho_1<r\leq \infty$, or if $d=2$ and $\rho_2<r<\infty$, or if $d\in\{3,4\}$ and $\rho_d<r\leq\frac{2d}{d-2}$, then gaussians are not local maximizers of \eqref{eq_Strichartz}.
\end{itemize}
\end{theorem}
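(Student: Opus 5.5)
Our plan is a second-order (Hessian) analysis of the deficit $\delta_{q,r}$ at $g_0$, followed by a standard local-to-stable upgrade. Since $\delta_{q,r}$ is invariant under the symmetry group generating $\Gg$ (Appendix~\ref{app_symmetry}), for $f$ close to $\Gg$ we may normalize so that the nearest gaussian is a multiple of $g_0$. We then write $f=g_0+h$ with $h\perp T_{g_0}\Gg$, the real tangent space of dimension $2d+4$. Expanding $\Phi(f)=\|e^{-it\Delta/2}f\|^2_{L^q_tL^r_x}$ to second order, the first variation vanishes because $g_0$ is a critical point. This follows since $|e^{-it\Delta/2}g_0|$ is, at each time, a gaussian with explicit width $(1+t^2)^{-1/2}$, which makes the Euler--Lagrange equation checkable directly. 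We are left with
\[
\delta_{q,r}[g_0+h]=Q(h)+o(\|h\|_{L^2}^2),
\]
where $Q$ is an explicit quadratic form. The remainder is controlled by standard Taylor bounds for $L^q_tL^r_x$ norms with $r,q\geq 2$, together with the Strichartz estimate itself.

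To compute $Q$, we use the lens transform. It maps $e^{-it\Delta/2}$ to the harmonic oscillator evolution $e^{-is(-\Delta+|x|^2)/2}$ on $s\in(-\pi/2,\pi/2)$, and it carries the mixed norm to a weighted norm $\int (\cos s)^{\alpha}\|\cdot\|^q_{L^r}\,ds$ with $\alpha=\alpha(q,r,d)$. The form $Q$ is built from $\|h\|^2$, $\int |u_0|^{r-2}|v|^2$ and $\Re\int |u_0|^{r-2}u_0^2\bar v^2$ (with $u_0,v$ the evolved $g_0,h$), together with a rank-one term coming from the outer $L^q$ norm. It commutes with rotations and with the oscillator, so we diagonalize it in the Hermite--Laguerre basis $\{L_k^{(\ell+d/2-1)}(|x|^2)\,Y_\ell\,e^{-|x|^2/2}\}$, with energy $2k+\ell+d/2$. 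Each block is finite-dimensional: gaussian integrals against $(\cos s)^{\alpha}e^{is(\cdot)}$ produce Beta-function expressions, and the eigenvalues $\mu_{k,\ell}(r,d)$ are explicit in Gamma functions. The directions with eigenvalue $0$ should be exactly the $2d+4$ tangent modes (energies $\leq 2$ in the appropriate combinations), and we verify this.

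For part (a) we need a spectral gap $\inf_{(k,\ell)\,\text{non-tangent}}\mu_{k,\ell}>0$. We expect the asymptotics in $k,\ell$ to give $\mu_{k,\ell}\to c>0$, since the cross terms decay like Beta functions, so only finitely many modes matter. The critical mode should be the lowest non-tangent one, which we expect to be energy $4$, e.g.\ $\ell=0$, $k=2$ or $\ell=4$, or a mixture of them. Setting its eigenvalue to zero yields a quadratic equation in $r$ whose root is $\rho_d$; this is consistent with the surds $4\sqrt7-6$ and $2\sqrt{15}-4$. For $d\geq6$ the root exceeds $2d/(d-2)$, so no threshold appears. For part (b), when $r>\rho_d$ that eigenvalue is negative. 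Taking $f=g_0+\varepsilon h$ with $h$ the corresponding eigenfunction gives $\delta<0$, because $h\perp T\Gg$ forces $\mathrm{dist}(f,\Gg)\sim\varepsilon$ so that $f$ satisfies \eqref{eq_closetogaussians}. At the endpoints $r=\infty$ ($d=1$) and $r=2d/(d-2)$ the expansion requires care, since differentiability of the $L^\infty$ norm fails. For these we plan to use the explicit gaussian structure (a unique maximum point of $|u_0|$) or a limiting argument.

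The main obstacle is proving that the energy-4 mode is truly the minimizer across \emph{all} $(k,\ell)$ for every $r<\rho_d$, i.e.\ a uniform monotonicity statement for the Gamma/Laguerre expressions. We would first try to show that $\mu_{k,\ell}$ is increasing in energy via ratio estimates of Gamma functions, handling small cases numerically and rigorously. Once the gap holds, stability follows by the standard Bianchi--Egnell-type argument: coercivity on $(T\Gg)^\perp$ plus the remainder bound gives $\delta\geq c\,\mathrm{dist}^2$ near $\Gg$.
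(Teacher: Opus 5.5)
\textbf{There is a genuine gap.} Your overall architecture is the paper's: lens transform, second variation, diagonalization in the modes $L_k^{\nu+\ell}(|x|^2)Y_\ell e^{-|x|^2/2}$, the sign of the bottom of the spectrum, and a unique-maximum-point argument at $r=\infty$. However, the step that carries the theorem is missing, namely locating the bottom of the spectrum over all non-tangent modes, and your heuristic for it is wrong.

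\textbf{The critical mode is misidentified in $d=1,2$.} The lowest non-tangent modes are $(k,\ell)=(0,2)$ (energy $2$, for $d\ge2$) and $(1,1)$ (energy $3$), not energy-$4$ modes. Write $\lambda=2/r$ and $\nu=d/2-1$.
\begin{itemize}
\item The $(1,1)$ eigenvalue is a positive multiple of $(1-\lambda)\bigl((\nu+3)\lambda-(\nu+1)\bigr)$. In $d=1,2$ it is this mode ($h_3$ when $d=1$) that changes sign, through a \emph{linear} condition that gives $\rho_1=10$ and $\rho_2=6$.
\item The energy-$4$ radial mode $(2,0)$ has eigenvalue proportional to $(1-\lambda)\bigl((\tfrac32\nu+4)\lambda^2-(\nu+1)\lambda-\tfrac{\nu}{2}\bigr)$. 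For $d=1$ this never vanishes (negative discriminant); for $d=2$ it vanishes at $r=8$.
\item The other energy-$4$ modes $(1,2)$ and $(0,4)$ stay positive.
\end{itemize}
So if you take an energy-$4$ mode as critical, you find no threshold at all in $d=1$. In $d=2$ you find $8$ instead of $6$, i.e.\ you would claim stability for $6<r<8$, which is false. Your quadratic with surds is correct only for $d=3,4$, and for $d=5$, where its root is the endpoint $10/3$.

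\textbf{The proposed monotonicity in energy is false.} You suggest proving global minimality by showing $\mu_{k,\ell}$ increases with the energy $2k+\ell$. This cannot work:
\begin{itemize}
\item For $d=2$ one has $1-p_{0,2}=1-\lambda$ and $1-p_{1,1}=(1-\lambda)(3\lambda-1)$. Hence the energy-$3$ eigenvalue lies below the energy-$2$ one exactly when $3<r<6$.
\item For $d=3$ the minimizer switches twice among $(0,2)$, $(1,1)$, $(2,0)$ inside the stable range.
\end{itemize}

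\textbf{What the paper does instead.}
\begin{itemize}
\item Admissibility makes your weight exponent $\alpha$ equal to $0$. This is also what makes the quadratic form exactly diagonal in time.
\item The Laguerre dilation formula turns each eigenvalue into an explicit polynomial $1-p_{k,\ell}(\lambda)$, rather than a Gamma/Beta expression.
\item Two \emph{separate} monotonicities are proved. First, $\lambda^{\alpha}F_{k,\alpha}(\lambda)$ strictly decreases in $\alpha$, so $p_{k,\ell}$ decreases in $\ell\ge1$ at fixed $k$; this uses a Cauchy--Schwarz argument. Second, $F_{k,\alpha}(\lambda)$ decreases in $k$ for $\lambda$ in an explicit range ($\lambda>\alpha/(\alpha+2)$ when $\alpha\ge0$); this uses the Jacobi three-term recurrence.
\item One restricts to $\lambda\ge(\nu+1)/(\nu+3)$, below which $(1,1)$ is already negative. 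This reduces the infimum to the three candidates $(1,1)$, $(0,2)$, $(2,0)$.
\item Finally, explicit Sturm-sequence and positive-coefficient checks give $p_{3,0}<p_{2,0}$ and $\sup_{k\ge4}p_{k,0}<p_{1,1}$ on the relevant range.
\end{itemize}
Without an argument of this type, part (a) is unproved, and part (b) in $d=1,2$ is aimed at the wrong test direction.

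(Minor: the endpoint $r=2d/(d-2)$ needs no special care, since there $r<\infty$ and $q=2$. Only $r=\infty$ in $d=1$ does. There the paper perturbs by odd Hermite functions $h_n$ with $n\ge3$, which is consistent with $(1,1)$ being the critical mode.)
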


\begin{remark}\label{rem_postmainthm}
    A few observations may help to further orient the reader.

\begin{itemize}
    \item[(a)]
If $(q,r)=(\infty,2)$, then every nonzero $L^2$ function is a maximizer; see \cite[Remark 1.9]{Sh09}.
\item[(b)] The upper bound $r \leq \frac{2d}{d-2}$ follows from \eqref{eq_admissible}  as long as $d\geq 3$. If $d\in\{1,2\}$, then this upper bound  boils down to  $r\leq\infty$ (with the exclusion of $r=\infty$ if $d=2$). 

\item[(c)]  In low dimensions $1\leq d\leq 5$, the threshold $\rho_d$ is the unique exponent not covered by Theorem \ref{thm_main}.  In order to establish whether gaussians are local maximizers for the threshold inequality, one would have to study higher-order Fréchet derivatives of $\delta_{q, \rho_d}$, which we do not pursue  here. If $d=5$, then the endpoint $r=\frac{2d}{d-2}$ coincides with the  threshold $\rho_5=\frac{10}{3}$. If $d\geq 6$, then gaussians are always stable local maximizers, and the picture is complete.
\item[(d)] The diagonal case $q=r$ corresponds to $r=2+\frac4d$. Theorem~\ref{thm_main} (a) thus recovers the previous positive results  from \cite[Theorem 1.2]{GN22}. The novelty here is that, away from the diagonal,  a genuinely new phenomenon occurs in dimensions $1\leq d\leq 4$, which in particular  disproves \cite[Conjecture 1]{Go19} in a range of exponents, but leaves open the possibility that the conjecture holds in the remaining range. For instance, when $d\geq 6$, gaussians are local maximizers for every nontrivial Schrödinger-admissible pair.

\end{itemize}
\end{remark}

Our second main result establishes effective global, or {\it constructive}, stability for all  Schrödinger-admissible pairs $(q,r)$ where $q,r$ are even integers and  $r$ divides $q$.\footnote{There exist two additional Schrödinger-admissible  {\it even} pairs  $(q,r)\in
\{(2,6),(2,4)\}$, which are endpoints in dimensions $d=3,4$, respectively. In view of Theorem \ref{thm_main} (b), gaussians fail to be  local maximizers in either case.} 

\begin{theorem}[Effective Global Stability]
\label{thm:effective_stability}
Let $f\in L^2(\mathbb R^d)$ with $f\geq 0$. If $d=1$, then
\begin{equation}\label{eq_stable66}
    \begin{split}
        \delta_{6,6}[f]
        &=
        \frac{1}{\sqrt[6]{3}}\lVert f\rVert_{L^2(\mathbb R)}^2
        -
        \lVert e^{-it\Delta/2}f\rVert_{L^6_tL^6_x(\mathbb R\times \mathbb R)}^2\\
        &\geq
        \frac{2}{8957979\sqrt[6]{3}}
        \operatorname{dist}^2(f,\Gg).
    \end{split}
\end{equation}
Moreover,
\begin{equation}\label{eq:Eight_Four_Stability}
    \begin{split}
        \delta_{8,4}[f]
        &=
        \frac{1}{\sqrt[4]{2}}\lVert f\rVert_{L^2(\mathbb R)}^2
        -
        \lVert e^{-it\Delta/2}f\rVert_{L^8_tL^4_x(\mathbb R\times \mathbb R)}^2\\
        &\geq
        \frac{9}{33554504\sqrt[4]{2}}
        \operatorname{dist}^2( f,\Gg).
    \end{split}
\end{equation}
If $d=2$, then 

\begin{equation}\label{eq_stable44}
    \begin{split}
        \delta_{4,4}[f]
        &=
        \frac{1}{\sqrt{2}}\lVert f\rVert_{L^2(\mathbb R^2)}^2
        -
        \lVert e^{-it\Delta/2}f\rVert_{L^4_tL^4_{\boldsymbol{x}}(\mathbb R\times\mathbb R^2)}^2\\
        &\geq
        \frac{1}{307212\sqrt{2}}
        \operatorname{dist}^2(f,\Gg).
    \end{split}
\end{equation}
For arbitrary complex-valued $f$, the same estimates hold with $\operatorname{dist}(f,\mathfrak G)$ replaced by $\operatorname{dist}(|f|,\mathfrak G)$.
\end{theorem}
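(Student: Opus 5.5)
We plan to reduce each case to a quadratic-form inequality on a Hilbert space where the relevant operator is explicitly diagonalizable. For even exponents with $r\mid q$, the quantity $\|e^{-it\Delta/2}f\|_{L^q_tL^r_x}^{q}$ is a multilinear expression in $f$. In the pure-norm cases $(6,6,1)$ and $(4,4,2)$ we use the Foschi-type identity. It writes $\|e^{-it\Delta/2}f\|_{L^{2k}}^{2k}=\|(e^{-it\Delta/2}f)^{k}\|_{L^2}^2$ as $c\,\langle P(f^{\otimes k}),f^{\otimes k}\rangle_{L^2(\R^{dk})}$. Here $P$ is the orthogonal projection (after normalization) onto functions of $(\sum \xi_j, \sum|\xi_j|^2)$, i.e.\ onto the functions invariant under the rotations of $\R^{dk}$ fixing the diagonal direction. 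For $(8,4,1)$ we first write $\|u(t)\|_{L^4_x}^2=\|u(t)^2\|_{L^2_x}$ and $u^2=e^{-it\Delta/2}$-type evolutions of the two-fold tensor. We then use the lens transform to reduce to a periodic problem in time. There the $L^4_t$ norm of $\|u^2(t)\|^2_{L^2_x}$ becomes a quartic expression amenable to the same projection structure.

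The first step is the reduction $f\ge0$. Since $|e^{-it\Delta/2}f|$ is pointwise dominated in the multilinear Fourier representation by $e^{-it\Delta/2}$ applied to $|\hat f|$-type data, replacing $f$ by $|f|$ can only increase the Strichartz norm while preserving $\|f\|_2$. For the relevant positive-kernel representation (after a Fourier transform, using Plancherel) it suffices to treat nonnegative functions; this yields the final sentence of the theorem. Next we expand $f$ in the Hermite/Laguerre basis adapted to the lens transform (spherical harmonics times Laguerre polynomials when $d=2$). The operator $P$ acts on $f\otimes\cdots\otimes f$ and is diagonalized by these functions. The key computation is the spectral gap: the eigenvalue associated with $g_0$ is the maximal one, $\lambda_0$. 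On the orthogonal complement of the tangent space to $\Gg$ at $g_0$ (the span of Hermite functions of degree $\le 2$ modulo symmetries) the relevant eigenvalues are at most $\lambda_0(1-\gamma)$ with explicit $\gamma$.

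With the gap in hand, the global step follows the Carlen-type strategy. Write $f=a g+h$ with $g\in\Gg$ a closest gaussian, so $h\perp T_g\Gg$ and $\|h\|=\mathrm{dist}(f,\Gg)$. Use the symmetries to put $g=g_0$, and expand $\langle P f^{\otimes k},f^{\otimes k}\rangle$ in powers of $h$. The quadratic terms are controlled by the spectral gap. The higher-order terms are bounded crudely by $\|h\|^3,\|h\|^4,\dots$ via Cauchy--Schwarz and $\|P\|\le1$. This gives the estimate when $\mathrm{dist}(f,\Gg)\le\varepsilon\|f\|$ with explicit $\varepsilon$. When the distance is large we argue differently. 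Positivity $f\ge0$ forces $\langle f,g_0\rangle$ to be bounded below in a usable way, and a simple convexity/interpolation argument between the two regimes gives $\delta\ge c\,\mathrm{dist}^2$ with the stated explicit (tiny) constants. Finally we pass from the $2k$-th power to the square via the elementary inequality $A^{1/k}-B^{1/k}\ge \frac1k A^{1/k-1}(A-B)$.

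The main obstacle is the exact diagonalization of $P$, or its $(8,4)$ analogue, on products of nonnegative Hermite expansions, together with bounding the cross terms. The cross terms in $\langle P f^{\otimes k},f^{\otimes k}\rangle$ between $g_0$ and $h$ are not diagonal. We first try to show that $P(g_0^{\otimes(k-1)}\otimes h)$ only sees the radial/Laguerre components of $h$, and then compute those eigenvalues explicitly. Tracking the constants through the far-from-gaussian regime is what produces the very small numbers in the statement.
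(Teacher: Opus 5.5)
\textbf{There is a genuine gap in the global step.}

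Your local analysis is broadly on track. An exact spectral gap on $(T_{g_0}\Gg)^\perp$ plus explicit control of cubic and higher terms is the right shape. In the even pure-norm cases, bounding the finitely many higher-order terms of the polynomial expansion is a legitimate substitute for the paper's Lagrange remainder with constant $K_{q,r}$.

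The gap is the far regime $\operatorname{dist}(f,\Gg)>\varepsilon\|f\|_{L^2}$. There you need an explicit bound $\delta[f]\ge c\|f\|_{L^2}^2$, and neither ingredient you propose delivers it.

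First, nonnegativity gives no usable lower bound on the gaussian overlap. Since $\sup_{g\in\Gg}|\langle f,g\rangle|^2/\|g\|^2=\|f\|^2-\operatorname{dist}^2(f,\Gg)$, being far from $\Gg$ means precisely that every gaussian overlap is small. For example, sums of many widely separated nonnegative bumps make it arbitrarily small.

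Second, even with such a bound, no convexity or interpolation step turns information near $\Gg$ into a quantitative lower bound on the deficit far from $\Gg$. The sharp inequality plus local stability yields global stability only through compactness of maximizing sequences, which is how the earlier non-effective version was proved. Otherwise it needs some genuinely global mechanism.

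Two secondary points. The $(8,4)$ multilinear structure is left unspecified. Also, in all three cases the gap is attained at the non-radial mode $f_{1,1}$ (odd, proportional to $h_3$, when $d=1$), so tracking only radial components would miss it.

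\textbf{The missing idea} is the heat-flow monotonicity of Bennett--Bez--Carbery--Hundertmark, combined with Christ's local-to-global argument. For even $q,r$ with $r\mid q$, the flow $\Phi_s(f)=\sqrt{e^{s\Delta}|f|^2}$ has three properties. It preserves the $L^2$ norm, the deficit strictly decreases along it away from $\Gg$, and it drives $\Phi_s(f)$ continuously to $\Gg$. Hence if $\operatorname{dist}(f,\Gg)>\eta\|f\|$, you stop at a time $s$ with $\operatorname{dist}(\Phi_s(f),\Gg)=\eta\|\Phi_s(f)\|$ and chain
\[
\frac{\delta[f]}{\operatorname{dist}^2(f,\Gg)}\ge\frac{\delta[f]}{\|f\|^2}\ge\frac{\delta[\Phi_s(f)]}{\|\Phi_s(f)\|^2}\ge\frac{\Lambda}{3}\,\frac{\operatorname{dist}^2(\Phi_s(f),\Gg)}{\|\Phi_s(f)\|^2}=\frac{\Lambda\eta^2}{3}.
\]
This is where $f\ge0$ actually enters, since $\Phi_0$ is the identity only on nonnegative functions. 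It is also why $r\mid q$ is required, which explains the inclusion of $(8,4)$.

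The constants in the statement are exactly $\Lambda\eta^2/3$ with $\eta=\min\{5^{-1/2},\,2\Lambda(K_{q,r}^2\mathbf{S}_{q,r}^4+4\Lambda^2)^{-1/2}\}$. An unspecified interpolation step would not reproduce them.

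Finally, your multilinear domination argument gives domination by the datum $\mathcal{F}^{-1}|\hat f|$, not $|f|$. Passing to $|f|$ also uses invariance of the $L^q_tL^r_x$ norm under $f\mapsto\hat f$, which is a time shift by $\pi/2$ after the lens transform.
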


Inequalities \eqref{eq_stable66} and \eqref{eq_stable44} were established in \cite[Theorem 1.2]{GN22} without any quantitative control on the stability constants. A similar situation underlies the sharpened Hausdorff--Young inequality of Christ \cite{Ch14}, for which constructive stability is still an open question.
For the Sobolev and log-Sobolev inequalities, constructive stability was recently established, with optimal dimensional dependence, by Dolbeault--Esteban--Figalli--Frank--Loss \cite{DEFFL25}.
The novelty of Theorem \ref{thm:effective_stability} lies in the off-diagonal case \eqref{eq:Eight_Four_Stability} together with the effective constants for all three inequalities.

\subsection{Proofs via spectral gap analysis}
The proofs of Theorems \ref{thm_main} and \ref{thm:effective_stability} rely on the exact computation of a spectral gap, which is interesting in its own right. We proceed to describe it in Theorem \ref{thm_gap} below.

Since $e^{-it\Delta/2}g_0$ never vanishes (see \eqref{eq_PropGauss}), the deficit functional
$\delta_{q,r}$ is twice real-differentiable at $g_0$ for
$q,r\in(2,\infty)$. For every $f\in L^2$, we denote the first two
Fr\'echet derivatives by
\[\delta_{q,r}'[g_0](f):=\left.\tfrac{\partial}{\partial\varepsilon}\right\vert_{\varepsilon=0}\delta_{q,r}[g_0+\varepsilon f],\,\text{ and }\,\delta_{q,r}''[g_0](f,f):=\left.\tfrac{\partial^2}{\partial\varepsilon^2}\right\vert_{\varepsilon=0}\delta_{q,r}[g_0+\varepsilon f].\]
 The local stability problem is reduced to the study of perturbations $g_0+h$ with
$h\perp T_{g_0}\Gg$; see Lemma~\ref{lem:third_estimate} below. Indeed, for $f$ sufficiently close to
$\Gg$, after applying a symmetry we may write
\[
    f=g_0+h,\qquad
    \|h\|_{L^2}=\operatorname{dist}(f,\Gg),
    \qquad h\perp T_{g_0}\Gg.
\]
Since the deficit is twice Fr\'echet differentiable at $g_0$
(for $q=2$, this follows directly from the calculations in
Section~2, where $\mathfrak d_{2,r}=\delta_{2,r}$), we have
\[
    \delta_{q,r}[g_0+h]
    =\frac12\delta_{q,r}''[g_0](h,h)
      +o(\|h\|_{L^2}^2).
\]
Thus gaussians are stable local maximizers provided the {\it spectral gap}
\begin{equation}\label{eq_SpecGap}
\Lambda_{q,r}(d):=\inf_{f\perp T_{g_0}\Gg}
\frac{\delta''_{q,r}[g_0](f,f)}
     {2\|f\|_{L^2(\mathbb R^d)}^2}
\end{equation}
is strictly positive.
Here, $T_{g_0} \Gg$ denotes the tangent space
to $\Gg$ at $g_0$, which is computed in Appendix~\ref{app_symmetry} and explicitly given by 
\[T_{g_0} \Gg:=\textup{span}_{\Co}\{e^{-|\boldsymbol{x}|^2/2},x_1e^{-|\boldsymbol{x}|^2/2},x_2e^{-|\boldsymbol{x}|^2/2},\ldots,x_de^{-|\boldsymbol{x}|^2/2},|\boldsymbol{x}|^2e^{-|\boldsymbol{x}|^2/2}\};\]
see also \cite[\S4]{GN22}.
Given $d\geq 2$, let $Y_m$ be a spherical harmonic of degree $m$ normalized by
\[
\int_{\Ss^{d-1}}|Y_m(\boldsymbol{\omega})|^2\d\sigma(\boldsymbol{\omega})=1,
\]
where $\d\sigma$ denotes the usual surface measure on the unit sphere $\mathbb S^{d-1}\subset\R^d$, satisfying $\sigma(\mathbb S^{d-1})=\frac{2\pi^{d/2}}{\Gamma(d/2)}$.
Further setting $\nu:=
\frac d2-1$, we define
\begin{equation}\label{eq_fkm_intro}
    f_{k,m}(\rho\boldsymbol{\omega})
    :=
    L_k^{\nu+m}(\rho^2)Y_m(\boldsymbol{\omega})\rho^m e^{-\rho^2/2},
    \qquad
    k,m\in \Z_{\geq 0},
\end{equation}
where $L_k^\alpha$ is the generalized Laguerre polynomial, normalized so that $L_k^\al(0)=\binom{k+\al}k$.
The changes needed to make this work in dimension $d=1$ are discussed in Remark \ref{rem_d1} below. For now, it suffices to mention that, when $d=1$, the function $f_{1,1}$ is proportional to $h_3$, the third Hermite function.
We further define the polynomials $p_{1,1}, p_{0,2}$ and $p_{2,0}$ via
$$p_{1,1}(\la) :=1- (1-\la)((\tfrac d2+2)\la-\tfrac d2),$$ $$p_{0,2}(\la):=\la,$$
\begin{align*}
p_{2,0}(\la) &:=1-(1-\la) \bigg( \left(\tfrac{3d}{4}
 + \tfrac{5}{2}\right) \la^2
 - \tfrac{d}{2}\la
 + \left(\tfrac{1}{2}-\tfrac{d}{4}
 \right) \bigg).
\end{align*}
These polynomials are part of a larger family $\{p_{k,m}\}$ which satisfies the following  property established in Proposition \ref{prop_modes} below:
\[
    \frac{\delta_{q,r}''[g_0](f_{k,m},f_{k,m})}{2\lVert f_{k, m}\rVert_{L^2(\R^d)}^2}\cong_{d,r} 1-p_{k,m}(\tfrac2r).
\]

A quantitative analysis of Theorem \ref{thm_main}  (a) leads to the following result.

\begin{theorem}[Spectral Gap]\label{thm_gap}
Let $f_{k,m}$ be as in \eqref{eq_fkm_intro}, and let $(q,r)$ be a Schrödinger-admissible pair.

\begin{itemize}
\item[(a)] If $d=1$ and $2<r<10$, then
\begin{align}\label{eq_gapd1}
\Lambda_{q,r}(1)
&=
\frac{\delta_{q,r}''[g_0]
(f_{1,1},f_{1,1})}
{2\|f_{1,1}\|_{L^2(\R)}^2} \\
&=
\pi^{\frac{2}{q}-\frac{1}{2}}
\left(\frac{2\pi}{r}\right)^{\frac1r}
\left(1-p_{1,1}\left(\frac{2}{r}\right)\right)>0.
\end{align}

\item[(b)] If $d=2$ and $2<r<6$, then
\begin{align}\label{eq_gapd2}
\Lambda_{q,r}(2)
&=
\min\left\{
\frac{\delta_{q,r}''[g_0]
(f_{1,1},f_{1,1})}
{2\|f_{1,1}\|_{L^2(\R^2)}^2},
\frac{\delta_{q,r}''[g_0]
(f_{0,2},f_{0,2})}
{2\|f_{0,2}\|_{L^2(\R^2)}^2}
\right\} \\
&=
\pi^{\frac{2}{q}-1}
\left(\frac{2\pi}{r}\right)^{\frac2r}
\min\left\{
1-p_{1,1}\left(\frac{2}{r}\right),
1-p_{0,2}\left(\frac{2}{r}\right)
\right\}>0.
\end{align}

\item[(c)] If $d\in\{3,4,5\}$ and $2<r<\rho_d$, or if
$d\geq6$ and $2<r\leq\frac{2d}{d-2}$, then
\begin{align}\label{eq_gapd3}
\Lambda_{q,r}(d)
&=
\min\left\{
\frac{\delta_{q,r}''[g_0]
(f_{1,1},f_{1,1})}
{2\|f_{1,1}\|_{L^2(\R^d)}^2},
\frac{\delta_{q,r}''[g_0]
(f_{0,2},f_{0,2})}
{2\|f_{0,2}\|_{L^2(\R^d)}^2},
\frac{\delta_{q,r}''[g_0]
(f_{2,0},f_{2,0})}
{2\|f_{2,0}\|_{L^2(\R^d)}^2}
\right\} \\
&=
\pi^{\frac{2}{q}-\frac{d}{2}}
\left(\frac{2\pi}{r}\right)^{\frac dr}
\min\left\{
1-p_{1,1}\left(\frac{2}{r}\right),
1-p_{0,2}\left(\frac{2}{r}\right),
1-p_{2,0}\left(\frac{2}{r}\right)
\right\}>0.
\end{align}
\end{itemize}
\end{theorem}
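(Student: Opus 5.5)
The plan is to diagonalize the quadratic form $\delta''_{q,r}[g_0]$ in the basis $\{f_{k,m}\}$ and then minimize over the modes orthogonal to $T_{g_0}\Gg$.

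\textbf{Step 1: compute the second variation.} Write $u_0=e^{-it\Delta/2}g_0$, an explicit never-vanishing gaussian. Expanding $\|u_0+\varepsilon u\|_{L^q_tL^r_x}^2$ to second order gives two pieces. The first is a weighted $L^2$-type form, with weight $|u_0|^{r-2}$ (integrated in $x$) and a time weight $\|u_0(t)\|_{L^r}^{q-r}$, involving $|u|^2$ and $\Re(\bar u_0^2u^2)$-type terms. The second collects rank-one cross terms coming from the derivative of the outer $L^q_t$ norm and of the square.

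\textbf{Step 2: pass to the harmonic oscillator via the lens transform.} After the lens transform and the change of time variable $t=\tan s$, the weight $|u_0|^{r-2}$ becomes a fixed gaussian times a power of $\cos s$. The evolution of $f_{k,m}$ acquires the phase $e^{-i(2k+m+d/2)s}$, since $f_{k,m}$ are Hermite (harmonic-oscillator) eigenfunctions with eigenvalue $2k+m+d/2$. Hence:
\begin{itemize}
\item[(i)] Modes with different $(k,m)$ decouple. Rotational invariance separates different $m$ and spherical harmonics. Distinct energies give oscillatory $s$-integrals, which I expect vanish or combine into something diagonal. This is the content I would record as Proposition \ref{prop_modes}.
\item[(ii)] Each diagonal entry reduces to integrals of Laguerre polynomials against $e^{-(r/2)\rho^2}$, together with a trigonometric integral $\int\cos^{a}(s)\cos(2ns)\,ds$.
\end{itemize}
Both are computable in closed form. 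Rescaling $\rho^2\mapsto 2\rho^2/r$ and using the multiplication formula for Laguerre polynomials shows that each entry equals the normalization $\pi^{2/q-d/2}(2\pi/r)^{d/r}$ times $1-p_{k,m}(2/r)$. Here $p_{k,m}$ is a polynomial in $\la=2/r$, and I would identify it explicitly for small $(k,m)$.

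\textbf{Step 3: minimize over the admissible modes.} Orthogonality to $T_{g_0}\Gg$ removes exactly the modes $(0,0)$, $(0,1)$ and $(1,0)$. The spectral gap is therefore $\min_{(k,m)\notin\{(0,0),(0,1),(1,0)\}}\bigl(1-p_{k,m}(2/r)\bigr)$, up to the normalization. In $d=1$ only $m\in\{0,1\}$ occur (parity), with $h_{2n}$ and $h_{2n+1}$ playing those roles. So the relevant low modes are $(1,1)\sim h_3$ and $(2,0)\sim h_4$, and one checks that $p_{2,0}\le p_{1,1}$ there. In $d=2$ one checks that $p_{2,0}$ is dominated by $p_{1,1}$ or $p_{0,2}$.

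\textbf{Step 4 (main obstacle): uniform domination of higher modes.} One must show $p_{k,m}(\la)\le\max\{p_{1,1},p_{0,2},p_{2,0}\}(\la)$ for all higher $(k,m)$ and all $\la$ in the range. I would aim for a monotonicity statement: $p_{k,m}$ decreases in $k$ and in $m$ for $\la\in[\frac{d-2}{d},1)$. I would prove it by writing $p_{k,m}(\la)$ as an explicit hypergeometric or Laguerre-coefficient sum with nonnegative terms, and then comparing consecutive terms using $|\cos s|\le1$-type bounds or convexity in $\la$.

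\textbf{Step 5: sign conditions and thresholds.} Positivity of $1-p_{1,1}$, $1-p_{0,2}$ and $1-p_{2,0}$ for $2/r\in(2/\rho_d,1)$ is a direct quadratic/cubic root computation. The roots give $\rho_d$: for instance, $1-p_{1,1}$ vanishes at $\la=\frac15$ when $d=1$, which is $r=10$. This yields the stated strict positivity.
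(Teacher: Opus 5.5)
Your architecture is the paper's: lens transform, diagonalization in the $f_{k,m}$, the Laguerre dilation formula giving $1-p_{k,m}(2/r)$, then reduction to a few low modes. But Step 4, which you rightly call the main obstacle, is where the proof actually lives, and your plan does not go through there.

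\textbf{Monotonicity in $k$.} Monotonicity in $m$ is indeed an elementary comparison of nonnegative sums, valid for all $\lambda\in(0,1)$ (Lemma~\ref{lem_master}(i)). Monotonicity in $k$ is different: it is false on the range $[\frac{d-2}{d},1)$ you propose. Indeed, $p_{k,1}=F_{k,\nu+1}$ and $F_{k,\alpha}(0)=\binom{k+\alpha}{k}$, so $p_{k,1}(\lambda)$ \emph{increases} in $k$ for $\lambda$ near $0$, which lies in your range when $d\in\{1,2\}$. Already $F_{0,\alpha}-F_{1,\alpha}=(1-\lambda)\bigl((\alpha+2)\lambda-\alpha\bigr)$ shows that the comparison hinges on a lower bound such as $\lambda>\frac{\alpha}{\alpha+2}$. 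A proof must use such a bound essentially. The paper writes $F_{k,\alpha}(\lambda)=(1-2\lambda)^kP_k^{(\alpha,0)}(X)$ with $X=\frac{\lambda^2+(1-\lambda)^2}{1-2\lambda}$, and inducts on the Jacobi three-term recurrence, checking $AX+B-C\le1$ for $\lambda\ge\frac{\alpha}{\alpha+2}$ (resp.\ $\lambda\ge\frac{\alpha+1}{\alpha+3}$ when $\alpha<0$). Your sketch gives no mechanism of this kind. Also, ``$|\cos s|\le1$-type bounds'' are not available: once the eigenvalues are polynomials in $\lambda$, there is no $s$ left.

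\textbf{The radial family $m=0$.} More seriously, $p_{k,0}$ is not a sum of nonnegative terms. The rank-one term from the outer $L^q_t$ norm contributes $\bigl(\frac{\nu+2}{\nu+1}-\frac1\lambda\bigr)(1-\lambda)^{2k-1}\binom{k+\nu}{k}$ to $p_{k,0}$. This is negative precisely when $q<r$, i.e.\ $\lambda<\frac{\nu+1}{\nu+2}$, and that region meets the theorem's range in every dimension (e.g.\ $6<r<10$ for $d=1$, $4<r<6$ for $d=2$). There $\lambda p_{k,0}$ is a difference of two sequences that both decrease in $k$. So no termwise comparison yields $p_{k,0}\le p_{2,0}$ for all $k\ge2$, and your reduction to $\{p_{1,1},p_{0,2},p_{2,0}\}$ is unproven throughout the regime $q<r$.

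The paper does not prove monotonicity there at all. Instead it proves the cross-comparisons $p_{k,0}\le\lambda^{-1}F_{4,\nu}<p_{1,1}$ for $k\ge4$ and $p_{3,0}<p_{2,0}$, see \eqref{eq_claim}. These are explicit polynomial inequalities on $\max(\frac{\nu}{\nu+1},\frac{\nu+1}{\nu+3})\le\lambda\le\frac{\nu+1}{\nu+2}$. They are checked by Sturm sequences for $d\le9$, and for $d\ge10$ by coefficient positivity after substituting $\lambda=y+\frac{\nu}{\nu+1}$ and $\nu=\mu+4$. That is the missing ingredient in your proposal.

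\textbf{A smaller point in Step 2.} There is no residual weight $\cos^a s$. By admissibility $\frac2q+\frac dr=\frac d2$, all powers of $\cos s$ cancel exactly (Proposition~\ref{prop_lensnorms}). This cancellation is what makes the form diagonal: with a genuine weight, the off-diagonal integrals $\int\cos^a(s)\cos(2(k-k')s)\,ds$ would not vanish.
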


The minima in the  spectral gaps computed in Theorem \ref{thm_gap} cannot be removed. In fact, numerical evidence indicates that, in every dimension $d\geq 2$, the minimizing polynomial changes as $r$ varies in the admissible range; see Figure~\ref{fig_alt}, and \cite{NOS23} for different but related switching phenomena.

\begin{figure}[h]
    \centering
    \includegraphics[width=0.49\linewidth]{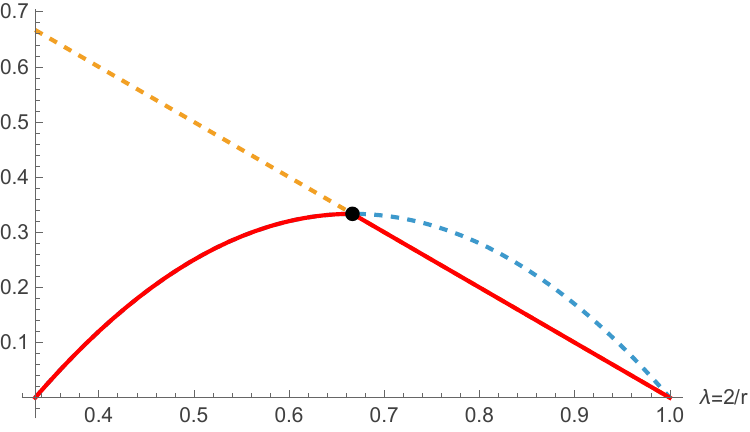}
    \includegraphics[width=0.5\linewidth]{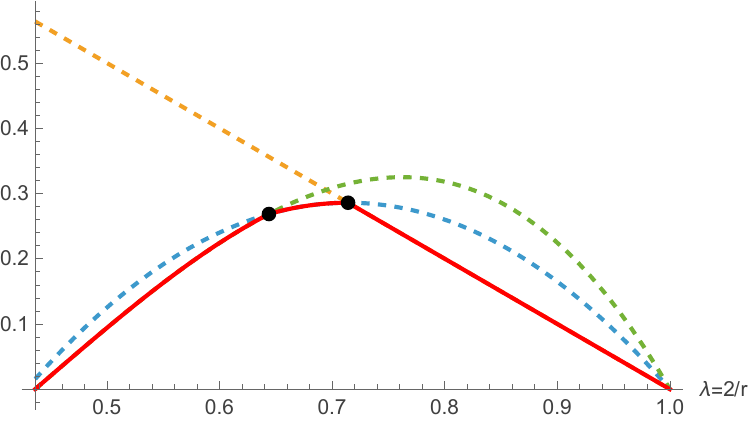}
    \caption{The cases $d\in\{2,3\}$ of Theorem \ref{thm_gap}. 
    Left: The case $d=2$ for $\frac13<\lambda=\frac2r<1$, with $1-p_{1,1}$ in dashed  blue, $1-p_{0,2}$ in dashed orange, and min$\{1-p_{1,1},1-p_{0,2}\}$ in solid red.
    Right: The case $d=3$ for $\frac1{2\sqrt7-3}<\lambda=\frac2r<1$, with $1-p_{1,1}$ in dashed blue, $1-p_{0,2}$ in dashed orange, $1-p_{2,0}$ in dashed green, and min$\{1-p_{1,1},1-p_{0,2},1-p_{2,0}\}$ in solid red. 
Note that the two panels display different $\lambda$-ranges. When $d=3$, we already observe the two switches which appear to be present in all  $d\geq 3$.}
    \label{fig_alt}
\end{figure}

We record  the spectral gaps of the cases corresponding to Theorem~\ref{thm:effective_stability}:
\begin{equation}\label{eq:spectral_gaps_concrete}
    \Lambda_{6,6}(1)=\frac{2}{9\sqrt[6]{3}},
    \qquad
    \Lambda_{8,4}(1)=\frac{3}{8\sqrt[4]{2}},
    \qquad
    \Lambda_{4,4}(2)=\frac{1}{4\sqrt{2}}.
\end{equation}
Here we used for $d=1$
\[
1-p_{1,1}\left(\frac13\right)=\frac29,
\qquad
1-p_{1,1}\left(\frac12\right)=\frac38,
\]
and, for $d=2$,
\[
1-p_{1,1}\left(\frac12\right)=\frac14,
\qquad
1-p_{0,2}\left(\frac12\right)=\frac12.
\]

We finish this section by briefly commenting on the proofs of Theorems \ref{thm_main} and \ref{thm:effective_stability}.

The idea for  Theorem \ref{thm_main} is that coercivity of $\delta_{q,r}''$ implies stability of $\delta_{q,r}$.
The proof proceeds through a spectral decomposition of the Hessian at
$g_0$, which we accomplish via the lens transform.
As part of our proof we will show that, when $r=\rho_d$ and $1\le d\le5$, the second variation of the deficit is positive semidefinite on $(T_{g_0}\Gg)^\perp$. On this space, its kernel has complex dimension $1$, except in dimension $d=2$, where it has complex dimension $2$; see Remark \ref{rem_dimKer} below.

 The method for Theorem~\ref{thm:effective_stability} uses a local-to-global argument due to Christ~\cite{Ch17}, as adapted in \cite[p.~10]{DEFFL25}, combining our spectral gap analysis from Theorem~\ref{thm_gap} with the heat-flow monotonicity of Strichartz norms identified in \cite{BBCH09}. The second inequality~\eqref{eq:Eight_Four_Stability} is especially interesting since it is mixed-norm, and treating the mixed-norm case thoroughly is exactly the main purpose of the present paper.

\subsection{Contextual remarks}\label{sec_context}
The quest for sharp Strichartz inequalities for the Schrödinger equation has a long and rich history,  intertwined with that of sharp restriction theory \cite{FOS17, NOST23} in a beautifully intricate way.
One can broadly distinguish four main types of contributions:
\begin{itemize}
    \item {\bf Existence of maximizers and precompactness of maximizing sequences modulo symmetries.} Kunze \cite{Ku03} established this when $(q,r,d)=(6,6,1)$, using a refinement of the concentration--compactness principle of Lions \cite{Li85}. In arbitrary dimensions $d\geq 1$, Shao \cite[Cor.\@ 1.8]{Sh09} established it for nonendpoint Schrödinger-admissible  pairs (i.e., $q\neq 2$), using a profile decomposition and refined Strichartz estimates due to Merle--Vega \cite{MV98}, Bégout--Vargas \cite{BV07}, and Carles--Keraani \cite{CK07}. Stovall \cite{St20} later treated $L^p\to L^q$ Fourier extension inequalities on the paraboloid when $p\neq 2$.
    \item {\bf Best constants and maximizers.} Ozawa--Tsutsumi \cite{OT98} computed the best constant \eqref{eq_Strichartz} when $(q,r,d)=(4,4,2)$, showing that ${\bf S}_{4,4}=2^{-1/4}$.  Hundertmark--Zharnitsky \cite{HZ06} and Foschi  \cite{Fo07} further computed ${\bf S}_{6,6}=3^{-1/12}$ and characterized the maximizers for $(q,r,d)\in\{(6,6,1),(4,4,2)\}$. The case $(q,r,d)=(8,4,1)$ was subsequently addressed by Carneiro \cite{Ca09}, who computed ${\bf S}_{8,4}=2^{-1/8}$. Alternative proofs followed, first by Bennett-Bez--Carbery--Hundertmark \cite{BBCH09} using quadratic heat flow, and then by Gonçalves \cite{Go19} using orthogonal polynomials and generating functions (he also formulated  \cite[Conjecture 1]{Go19} which we disprove here). A surprising detour was taken by Gonçalves--Zagier \cite{GZ22}, who found that {\it derivatives} of gaussians are the maximizers for \eqref{eq_Strichartz}  restricted to {\it odd} functions when $(q,r,d)=(6,6,1)$.
    \item {\bf Sharpened inequalities and stability.} Gonçalves \cite{Go19b} established a sharpened Strichartz inequality for radial functions when $(q,r,d)=(4,4,2)$. The work of Gonçalves--Negro \cite{GN22}, a natural precursor of the present work,  used the lens transform to prove that gaussians are local maximizers in the Strichartz--Stein--Tomas setting $q=r=2+\frac4d$.
    They further showed the existence of $C,\varepsilon$, depending only on $d$ but non-explicit, for which
    \begin{equation}\label{eq_stablest} \delta_{2+\frac4d,2+\frac4d}[f]\geq C\textup{dist}^2(f,\Gg) \text{ if }\textup{dist}(f,\Gg)<\varepsilon \|f\|_{L^2(\R^d)},
    \end{equation}
     noting that the latter condition can be removed in the lowest dimensional cases $d\in\{1,2\}$ (i.e., when the corresponding maximizers have been characterized). Recently, Di--Yan \cite{DY25} showed the existence of minimizers for the stability inequality~\eqref{eq_stablest}, under the assumption that gaussians maximize the corresponding sharp inequality. They also obtained a formula for the spectral gap  in the diagonal paraboloid setting; see \cite[Proposition~1.6]{DY25}.
    \item {\bf Applications.} 
    We mention three distinct examples, in chronological order, which emerged from the study of sharp Strichartz inequalities for the Schrödinger equation: Duyckaerts--Merle--Roudenko \cite{DMR11} proved some sharp results for the $L^2$-critical nonlinear Schrödinger equation, Frank--Lieb--Sabin \cite{FLS16}  established the conditional existence of endpoint Stein--Tomas maximizers on the sphere, and Oliveira e Silva--Quilodrán \cite{OSQ18} together with Brocchi--Oliveira e Silva--Quilodrán \cite{BOSQ20} resolved certain dichotomies from the PDE literature concerning  fractional and higher-order
Schrödinger equations.
\end{itemize}

Two further types of isolated contributions deserve to be mentioned. Wayne--Zharnitsky \cite{WZ21} and, more recently, Freire--Muñoz--Valenzuela \cite{FMV26} provided systematic numerical investigations of Strichartz maximizers. On the other hand, Christ--Quilodrán \cite{CQ14} showed that gaussians never extremize adjoint restriction inequalities for paraboloids outside the $L^2$ setting, providing an important counterpoint to the heuristic suggested by Lieb's principle \cite{Li90}. The present work grew out of a closer look at this tension.

\subsection{Outline}
In Section~\ref{sec_prelim}, we introduce the lens transform, compute the second variation, and diagonalize it in Laguerre/spherical-harmonic modes. In Section~\ref{sec_proofs}, we analyze the resulting polynomial eigenvalues and prove Theorems~\ref{thm_main} and~\ref{thm_gap}, treating the case $r=\infty$ when $d=1$ separately. In Section~\ref{sec:stability}, we combine the exact spectral gaps with heat-flow monotonicity to prove the effective global stability of Theorem \ref{thm:effective_stability}. The appendices contain some symmetry considerations and the relevant PARI/GP code.

\subsection{Notation}
 We use  $X\cong Y$ to denote the identity $X=CY$, for some $C>0$.

\section{Preliminaries}
\label{sec_prelim}
It is computationally simpler to consider the $q$-homogeneous version of the deficit functional:
\begin{equation}\label{eq:q_homog_deficit}
    \mathfrak{d}_{q,r}[f]
    :=
    \frac{\|e^{-it\Delta/2}g_0\|_{L_t^qL_{\boldsymbol{x}}^r(\R^{1+d})}^q}
         {\|g_0\|_{L^2(\R^d)}^q}
    \|f\|_{L^2(\R^d)}^q
    -
    \|e^{-it\Delta/2}f\|_{L_t^qL_{\boldsymbol{x}}^r(\R^{1+d})}^q.
\end{equation}
There is no conceptual difference between the local analysis of $\delta_{q,r}$ and of $\mathfrak{d}_{q, r}$;  Lemma~\ref{lem:deficit_dictionary} below elucidates  the precise relationship between the two. The first step in the proof of all results is  to apply a convenient transform to compactify the time integrals in $\mathfrak{d}_{q, r}$. 
\subsection{Lens transform and the second variation}
The lens transform is a map that is applied to the solutions of \eqref{eq_freeSchr}, turning the Laplace operator into the harmonic oscillator $\Delta-|\cdot|^2$. In order to apply it, 
we write the free Schr\"odinger equation as
\begin{equation}\label{eq_Schr}
    {\left(i\partial_t-\frac12\Delta_{\boldsymbol{x}}\right)u=0.}
\end{equation}

Define the lens map
\[
\Lc:\R^{1+d}\to\left(-\frac\pi2,\frac\pi2\right)\times\R^d,
\qquad
(t,\boldsymbol{x})\mapsto\left(\arctan t,\frac{\boldsymbol{x}}{\sqrt{1+t^2}}\right),
\]
with inverse $(t,\boldsymbol{x})=\left(\tan T,\frac{\boldsymbol{X}}{\cos T}\right)$. If $u=u(t,\boldsymbol{x})$ and
$U=U(T,\boldsymbol{X})$ are related by
\begin{equation}\label{eq_Uu}
    U(T,\boldsymbol{X})
    =
    \frac{1}{\cos^{d/2}T}
    u\!\left(\tan T,\frac{\boldsymbol{X}}{\cos T}\right)
    {e^{\frac{i}{2}|\boldsymbol{X}|^2\tan T}},
\end{equation}
then $U$ solves
\begin{equation}\label{eq:harmonic_oscillator_evolution}
{\left(i\partial_T-\frac12\Hc\right)U=0},
\qquad
\Hc:=\Delta_{\boldsymbol{X}}-|\boldsymbol{X}|^2,
\end{equation}
whenever $u$ solves \eqref{eq_Schr}; see \cite{Tao09}. In our notation, this reads as follows.
\begin{lemma}[Lens Transform]\label{lem_lens}
If $(t,\boldsymbol{x})=\left(\tan T,\frac{\boldsymbol{X}}{\cos T}\right)$, then
\[
(e^{-it\Delta/2}f)(\boldsymbol{x})
=
\cos^{d/2}T\,
(e^{-iT\Hc/2}f)(\boldsymbol{X})\,
{e^{-\frac{i}{2}|\boldsymbol{X}|^2\tan T}}.
\]
\end{lemma}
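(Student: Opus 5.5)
The plan is to derive Lemma~\ref{lem_lens} from the explicit Mehler kernel of the harmonic oscillator propagator, after checking that both sides agree in the form stated. The sign conventions matter here. The paper writes the evolution as $i\partial_t u=\frac12\Delta u$, so $u(t)=e^{-it\Delta/2}f$. The propagator then has kernel
\[
(e^{-it\Delta/2}f)(\boldsymbol x)=(2\pi i t)^{-d/2}\int_{\R^d} e^{-i|\boldsymbol x-\boldsymbol y|^2/(2t)}f(\boldsymbol y)\d\boldsymbol y,
\]
up to the standard branch of the square root. This is because $e^{-it\Delta/2}$ acts on the Fourier side as multiplication by $e^{it|\boldsymbol\xi|^2/2}$, which is the conjugate of the usual Schrödinger propagator.

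Similarly, $e^{-iT\Hc/2}$ with $\Hc=\Delta-|\boldsymbol X|^2$ is the conjugate of the usual oscillator propagator $e^{-iT(-\Delta+|\boldsymbol X|^2)/2}$. By Mehler's formula, for $|T|<\pi/2$ its kernel is
\[
(2\pi i\sin T)^{-d/2}\exp\Big(-\frac{i}{2\sin T}\big((|\boldsymbol X|^2+|\boldsymbol y|^2)\cos T-2\boldsymbol X\cdot\boldsymbol y\big)\Big).
\]

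The key step is to substitute $t=\tan T$ and $\boldsymbol x=\boldsymbol X/\cos T$ into the free kernel and expand the phase:
\[
\frac{|\boldsymbol x-\boldsymbol y|^2}{2t}=\frac{|\boldsymbol X|^2}{2\sin T\cos T}-\frac{\boldsymbol X\cdot\boldsymbol y}{\sin T}+\frac{|\boldsymbol y|^2\cos T}{2\sin T}.
\]
Comparing with the Mehler phase, the difference is the $\boldsymbol y$-independent term
\[
\frac{|\boldsymbol X|^2}{2}\Big(\frac1{\sin T\cos T}-\frac{\cos T}{\sin T}\Big)=\frac{|\boldsymbol X|^2}{2}\tan T.
\]
This produces exactly the factor $e^{-\frac i2|\boldsymbol X|^2\tan T}$. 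For the prefactors, $(2\pi i\tan T)^{-d/2}=\cos^{d/2}T\,(2\pi i\sin T)^{-d/2}$, which gives the factor $\cos^{d/2}T$. Together these yield the identity for Schwartz $f$ and $T\neq0$. The case $T=0$ is trivial, and density together with unitarity of both propagators extends it to all $f\in L^2$.

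As a cross-check, or as an alternative route that avoids kernels, one can verify directly that $U$ defined by \eqref{eq_Uu} satisfies $(i\partial_T-\frac12\Hc)U=0$ with $U(0)=f$, and then invoke uniqueness. Inverting \eqref{eq_Uu} gives precisely the stated formula, since $\cos^{-d/2}T$ and the phase move to the other side. This route requires a chain-rule computation using $\partial_T t=\sec^2T$ and $\partial_T(\boldsymbol X\sec T)=\boldsymbol X\sec T\tan T$. The dilation terms cancel against the derivative of the phase $e^{\frac i2|\boldsymbol X|^2\tan T}$, and the leftover $|\boldsymbol X|^2$ term gives the potential.

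The main obstacle is bookkeeping of signs and branches. The paper's conventions (the $-\frac12\Delta$ convention and the sign of the potential in $\Hc$) are conjugate to the standard ones, so the Mehler formula must be taken in its conjugated form. The branch of $(i\sin T)^{-d/2}$ must also be chosen to match that of $(it)^{-d/2}$ for $T\in(-\pi/2,\pi/2)$. This matching holds because $\tan T$ and $\sin T$ have the same sign on that interval.
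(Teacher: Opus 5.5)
Your argument is correct, but your main route is not the paper's. The paper gives no separate proof. It quotes the classical fact (citing Tao) that the lens transform \eqref{eq_Uu} carries solutions of $(i\partial_t-\frac12\Delta)u=0$ to solutions of $(i\partial_T-\frac12\Hc)U=0$. The lemma is then just \eqref{eq_Uu} inverted, with $U(T)=e^{-iT\Hc/2}f$ identified through $U(0)=f$ and uniqueness. That is precisely your ``cross-check'' route.

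Your primary argument instead matches integral kernels: you substitute $t=\tan T$, $\boldsymbol x=\boldsymbol X/\cos T$ into the free propagator and compare with the Mehler kernel of $e^{-iT\Hc/2}$. Your phase computation, which isolates $\frac12|\boldsymbol X|^2\tan T$, is right, and so is the prefactor ratio $\cos^{d/2}T$.

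The two routes trade off differently. The kernel route gives an explicit verification in which every factor of the identity is visibly produced, but it requires importing Mehler's formula and doing sign and branch bookkeeping. The PDE route is insensitive to branches. It needs only a chain-rule computation plus an $L^2$ uniqueness argument (conservation of the $L^2$ norm of the difference of two solutions).

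There is one harmless slip. Conjugating the standard kernels also conjugates their prefactors. So with the paper's conventions the free prefactor is $(-2\pi i t)^{-d/2}$ and the Mehler prefactor is $(-2\pi i\sin T)^{-d/2}$, not $(2\pi i\,\cdot\,)^{-d/2}$. Since the same correction appears in both, and $\tan T$ and $\sin T$ share a sign on $(-\pi/2,\pi/2)$, the ratio is still $\cos^{d/2}T$ and your conclusion stands.
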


\begin{proposition}\label{prop_lensnorms}
Let $(q,r)$ be a Schrödinger-admissible pair. Then
\begin{equation}\label{eq_lensnorms}
    \|e^{-it\Delta/2}f\|_{L_t^qL_{\boldsymbol{x}}^r(\R^{1+d})}^q
    =
    \int_{-\pi/2}^{\pi/2}
    \left(
    \int_{\R^d}|(e^{-iT\Hc/2}f)(\boldsymbol{X})|^r\d \boldsymbol{X}
    \right)^{q/r}\d T,
\end{equation}
with the usual interpretation when $r=\infty$.
\end{proposition}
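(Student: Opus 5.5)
Substitute the pointwise identity of Lemma~\ref{lem_lens} into the mixed norm and change variables via the lens map, first in $\boldsymbol{x}$ at fixed time and then in $t$. The phase factor $e^{-\frac{i}{2}|\boldsymbol X|^2\tan T}$ has modulus one, so it drops out once absolute values are taken. The only work is to track the Jacobian factors and check that the admissibility relation \eqref{eq_admissible} makes them cancel exactly.

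Concretely, fix $t\in\R$ and set $T=\arctan t$, so $\cos T=(1+t^2)^{-1/2}>0$. The spatial substitution $\boldsymbol x=\boldsymbol X/\cos T$ has $\d\boldsymbol x=\cos^{-d}T\d\boldsymbol X$. By Lemma~\ref{lem_lens}, for $r<\infty$,
\[
\int_{\R^d}|(e^{-it\Delta/2}f)(\boldsymbol x)|^r\d\boldsymbol x
=\cos^{\frac{dr}{2}-d}T\int_{\R^d}|(e^{-iT\Hc/2}f)(\boldsymbol X)|^r\d\boldsymbol X .
\]
Raising to the power $q/r$ produces the factor $\cos^{q(\frac d2-\frac dr)}T=\cos^{2}T$, because $\frac d2-\frac dr=\frac2q$. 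For $r=\infty$ the dilation does not affect the sup norm, and the prefactor is directly $\cos^{qd/2}T=\cos^2T$, since then $\frac{d}{2}=\frac{2}{q}$.

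Next integrate in $t$, using $t=\tan T$ and $\d t=\sec^2T\d T$. This map is a diffeomorphism from $(-\frac\pi2,\frac\pi2)$ onto $\R$, and the factor $\sec^2T$ cancels the $\cos^2T$ exactly, which gives \eqref{eq_lensnorms}. All integrands are nonnegative, so Tonelli justifies the changes of variables with no convergence assumptions; both sides may be $+\infty$ a priori, and the identity then shows they are finite simultaneously. For $f\in L^2$, Lemma~\ref{lem_lens} is applied to the $L^2$-evolutions, if necessary first for Schwartz $f$ and then extended by density along a subsequence converging almost everywhere together with Fatou, or simply read as an a.e.\ identity.

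I expect no real obstacle. The one point requiring care is the bookkeeping showing that the exponent of $\cos T$ equals $2$ precisely under the scaling condition $\frac2q+\frac dr=\frac d2$, including the separate treatment of $r=\infty$. This cancellation is exactly why the lens transform compactifies the time integral with no weight.
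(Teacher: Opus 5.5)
Your proposal is correct and matches the paper's proof essentially step for step. In both arguments you substitute Lemma~\ref{lem_lens}, rescale $\boldsymbol{x}=\boldsymbol{X}/\cos T$ to pick up the factor $\cos^{d(r/2-1)}T$, use admissibility to turn its $q/r$ power into $\cos^2T$, and cancel it against $\d t=\cos^{-2}T\,\d T$. Your separate treatment of $r=\infty$ and your Tonelli/a.e.\ remarks are harmless additions that the paper leaves implicit.
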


\begin{proof}
Lemma~\ref{lem_lens} yields
\[
\|e^{-it\Delta/2}f\|_{L_t^qL_{\boldsymbol{x}}^r(\R^{1+d})}^q
=
\int_\R
\left(
\int_{\R^d}
\cos^{dr/2}T\,
|(e^{-iT\Hc/2}f)(\boldsymbol{X})|^r
\d \boldsymbol{x}
\right)^{q/r}\d t,
\]
where $T=\arctan t$ and $\boldsymbol{X}=\boldsymbol{x}\cos T$. The change of variables
$\boldsymbol{x}=\boldsymbol{X}/\cos T$ produces the Jacobian $\d \boldsymbol{x}=\cos^{-d}T\d \boldsymbol{X}$, so the inner integral
becomes
\[
\cos^{d(r/2-1)}T\int_{\R^d}|(e^{-iT\Hc/2}f)(\boldsymbol{X})|^r\d \boldsymbol{X}.
\]
Raising to the $q/r$ power and using the Schrödinger admissibility~\eqref{eq_admissible} yields
\[
\|e^{-it\Delta/2}f\|_{L_t^qL_{\boldsymbol{x}}^r(\R^{1+d})}^q
=
\int_\R
\left(
\int_{\R^d}|(e^{-iT\Hc/2}f)(\boldsymbol{X})|^r\d \boldsymbol{X}
\right)^{q/r}
\cos^2 T\,\d t.
\]
Finally, the change of variables $t=\tan T$ gives
$\d t=\cos^{-2}T\d T$, and \eqref{eq_lensnorms} follows.
\end{proof}

Note that this time compactification  reduces the  time interval $\mathbb R$ to $(-\pi/2, \pi/2)$. This is still not ideal, as we would like to work on $(-\pi, \pi)$ in order to fully exploit orthogonality of the characters. Moreover, it will be convenient to shift the harmonic oscillator $\mathcal{H}/2$ by $d/2$, coming from the eigenvalue relation $\Hc g_0=-dg_0$ (recall that $g_0=e^{-|\cdot|^2/2}$ denotes  the standard gaussian in $\R^d$). This results in $-(\Hc+d)/2$, which is known as the \emph{number operator} since its eigenvalues coincide with the nonnegative integers. The following proposition implements both improvements by exploiting a hidden symmetry of the harmonic oscillator evolution~\eqref{eq:harmonic_oscillator_evolution}.\footnote{A similar idea works for the wave equation; see~\cite[Lemma~A.5]{NOSST24}.}
\begin{proposition}\label{prop:symmetry_trick}
For every $f\in L^2(\R^d)$,
\begin{equation}\label{eq_symmetry_trick}
    \|e^{-iT\Hc/2}f\|_{L_T^qL_{\boldsymbol{X}}^r((-\pi/2,\pi/2)\times\R^d)}^q=\frac12\|e^{-iT(\Hc+d)/2}f\|_{L_T^qL_{\boldsymbol{X}}^r((-\pi,\pi)\times\R^d)}^q.
\end{equation}
\end{proposition}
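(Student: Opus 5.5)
The plan is to show that the map $T\mapsto \|e^{-iT(\Hc+d)/2}f\|_{L^r_{\boldsymbol X}}$ is $\pi$-periodic. Then the integral over $(-\pi,\pi)$ is twice the integral over any interval of length $\pi$, in particular over $(-\pi/2,\pi/2)$. Since multiplying the propagator by the unimodular factor $e^{-iTd/2}$ does not change the $L^r_{\boldsymbol X}$ norm, the left-hand side of \eqref{eq_symmetry_trick} equals $\int_{-\pi/2}^{\pi/2}\|e^{-iT(\Hc+d)/2}f\|_{L^r}^q\d T$. So once periodicity is established, \eqref{eq_symmetry_trick} follows by splitting $(-\pi,\pi)=(-\pi,-\pi/2)\cup(-\pi/2,\pi/2)\cup(\pi/2,\pi)$ and translating the two outer pieces by $\pm\pi$ so that together they cover $(-\pi/2,\pi/2)$ once more.

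The key step is to identify $e^{-i\pi(\Hc+d)/2}$. The operator $-(\Hc+d)/2$ is the number operator: it acts on the Hermite functions $h_{\boldsymbol\alpha}=h_{\alpha_1}\otimes\cdots\otimes h_{\alpha_d}$ with eigenvalue $|\boldsymbol\alpha|=\alpha_1+\dots+\alpha_d$. This follows from $\Hc g_0=-dg_0$ and the ladder structure, or equivalently from the Laguerre–spherical harmonic basis $f_{k,m}$ in \eqref{eq_fkm_intro}, where the eigenvalue is $2k+m$. Consequently $e^{-i\pi(\Hc+d)/2}h_{\boldsymbol\alpha}=e^{i\pi|\boldsymbol\alpha|}h_{\boldsymbol\alpha}=(-1)^{|\boldsymbol\alpha|}h_{\boldsymbol\alpha}$. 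Since $h_{\boldsymbol\alpha}(-\boldsymbol X)=(-1)^{|\boldsymbol\alpha|}h_{\boldsymbol\alpha}(\boldsymbol X)$, this operator coincides with the reflection $Pf(\boldsymbol X)=f(-\boldsymbol X)$ on an orthonormal basis, and hence on all of $L^2$ by unitarity and density.

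It then follows that $e^{-i(T+\pi)(\Hc+d)/2}f=P\,e^{-iT(\Hc+d)/2}f$, because $P$ commutes with the group. The reflection $P$ preserves every $L^r$ norm, including $r=\infty$, which gives the $\pi$-periodicity of the norm. In the case $q=\infty$ (which forces $r=2$), both sides of \eqref{eq_symmetry_trick} should be read as suprema, and the statement reduces to unitarity, so no work is needed there beyond the convention.

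The only point requiring care is the spectral identification. Justifying that $-(\Hc+d)/2$ has eigenvalue $|\boldsymbol\alpha|$ on $h_{\boldsymbol\alpha}$, and that the Hermite functions are complete in $L^2(\R^d)$, is standard; I would cite it rather than reprove it. Alternatively, one can avoid spectral theory by using Mehler's formula: the kernel at time $T=\pi$ degenerates to $\delta(\boldsymbol X+\boldsymbol Y)$ up to the phase $e^{-i\pi d/2}$, which the shift by $d$ cancels. The phase bookkeeping in that route is the main place where a sign error could creep in, which is why I prefer the eigenbasis argument.
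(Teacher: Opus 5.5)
Your proposal is correct and takes essentially the same approach as the paper. The paper proves $U(T+\pi,-\boldsymbol X)=U(T,\boldsymbol X)$ by checking it on eigenfunctions of the number operator $-(\Hc+d)/2$, whose parity matches the parity of the eigenvalue; this is your identity $e^{-i\pi(\Hc+d)/2}=P$. It then splits $(-\pi,\pi)$ and translates the outer pieces by $\pm\pi$, just as you do.
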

 The proof hinges on classical properties of the number operator; see e.g.~\cite[\S18.39(i)]{NIST:DLMF}.
\begin{lemma}\label{lem:number_operator_props}
    The operator $-(\Hc+d)/2$ has pure point spectrum $\mathbb Z_{\ge 0}$. Eigenfunctions are even or odd according to the parity of the corresponding eigenvalue. 
\end{lemma}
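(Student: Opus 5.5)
The plan is to construct an explicit orthogonal basis of eigenfunctions of $N:=-(\Hc+d)/2$ built from Hermite functions. In one dimension, set $h_n(x):=H_n(x)e^{-x^2/2}$, where $H_n$ is the $n$-th Hermite polynomial. The classical identity $-h_n''+x^2h_n=(2n+1)h_n$ gives $-(\partial_x^2-x^2+1)h_n/2=n\,h_n$, so $h_n$ is an eigenfunction of the one-dimensional number operator with eigenvalue $n$.

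In $d$ dimensions I will take tensor products. For a multi-index $\alpha=(\alpha_1,\dots,\alpha_d)$, put $h_\alpha(\boldsymbol X):=\prod_j h_{\alpha_j}(X_j)$. Since $-(\Hc+d)/2=\sum_j -(\partial_j^2-X_j^2+1)/2$ splits into one-dimensional pieces acting on separate variables, we get $N h_\alpha=|\alpha|\,h_\alpha$ with $|\alpha|=\sum_j\alpha_j\in\Z_{\ge0}$. Each $h_\alpha$ has parity $(-1)^{|\alpha|}$ under $\boldsymbol X\mapsto-\boldsymbol X$, because $H_n$ has parity $(-1)^n$ and the gaussian factor is even.

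The key step is completeness: $\{h_\alpha\}$ is an orthogonal basis of $L^2(\R^d)$. Orthogonality follows from the one-dimensional orthogonality of Hermite polynomials against $e^{-x^2}$. For density, I will use the standard argument. Suppose $g\perp h_\alpha$ for all $\alpha$. Then $g$ is orthogonal to $P(\boldsymbol X)e^{-|\boldsymbol X|^2/2}$ for every polynomial $P$. The function $F(\boldsymbol\xi):=\int g(\boldsymbol X)e^{-|\boldsymbol X|^2/2}e^{-i\boldsymbol X\cdot\boldsymbol\xi}\d\boldsymbol X$ extends to an entire function on $\Co^d$, by the gaussian decay and Cauchy--Schwarz. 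All its Taylor coefficients at $0$ vanish, so $F\equiv0$. Hence $g e^{-|\cdot|^2/2}=0$ by Fourier injectivity, and $g=0$. This is the only nontrivial point, and it is classical.

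Given completeness, $N$ is unitarily equivalent to multiplication by $|\alpha|$ on $\ell^2$; its natural self-adjoint realization is the closure from the span of the $h_\alpha$. Therefore the spectrum is the closure of $\{|\alpha|\}=\Z_{\ge0}$, consisting of eigenvalues only, i.e. pure point spectrum $\Z_{\ge0}$. Each eigenspace $\ker(N-n)$ is spanned by $\{h_\alpha:|\alpha|=n\}$; this is finite-dimensional, and in particular every eigenfunction is a finite combination of these. All of these functions share the parity $(-1)^n$, so every eigenfunction with eigenvalue $n$ is even or odd according to the parity of $n$, as claimed.
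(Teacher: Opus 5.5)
Your proof is correct and complete. The paper gives no argument of its own: it simply defers to the classical harmonic-oscillator theory (DLMF \S18.39(i)). Your tensor-product Hermite eigenbasis, with completeness via the entire-function/Fourier-injectivity argument and eigenspaces of common parity $(-1)^{|\alpha|}=(-1)^n$, is exactly that classical argument written out. The tensor basis is fully adequate here, even though the paper later notes it fails to diagonalize the second variation and switches to the Laguerre--spherical-harmonic eigenbasis $f_{k,m,j}$, which has eigenvalue $2k+m$ and parity $(-1)^m$.
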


\begin{proof}[Proof of Proposition~\ref{prop:symmetry_trick}]
First observe that 
\begin{equation}\label{eq:phase_translation_harmless}
    \lvert e^{-iT(\Hc + d)/2}f(\boldsymbol{X})\rvert=\lvert e^{-iT d/2}e^{-iT\Hc/2}f(\boldsymbol{X})\rvert= \lvert e^{-iT\Hc/2}f(\boldsymbol{X})\rvert, 
\end{equation}
so the phase shift does not influence any of the norms in~\eqref{eq_symmetry_trick}. We claim that 
\[
U(T,\boldsymbol{X}):=e^{-iT(\Hc+d)/2}f(\boldsymbol{X})
\]
satisfies the symmetry 
\begin{equation}\label{eq:hidden_symmetry_U}
    U(T+\pi,-\boldsymbol{X})=U(T, \boldsymbol{X}).
\end{equation}
 It will suffice to prove~\eqref{eq:hidden_symmetry_U} when $f=f_n$ is an eigenfunction of the number operator $-(\Hc+d)/2$. Assuming $-\frac{\Hc + d}{2} f_n=n f_n$ for some $n\in\mathbb Z_{\ge 0}$,
\[
U(T,\boldsymbol{X})=e^{in T}f_n(\boldsymbol{X}),
\]
and since $f_n(-\boldsymbol{X})=(-1)^n f_n(\boldsymbol{X})$ by Lemma~\ref{lem:number_operator_props}, we immediately obtain 
\begin{equation*}
    U(T+\pi,-\boldsymbol{X})=e^{in(T+\pi)}f_{n}(-\boldsymbol{X})=e^{in T}f_{n}(\boldsymbol{X})=U(T, \boldsymbol{X})
    ,
\end{equation*} 
as claimed.
We can now conclude as follows:
\[
\begin{aligned}
\|U\|_{L_T^qL_{\boldsymbol{X}}^r((-\pi,\pi)\times\R^d)}^q
&=
\left(
\int_{-\pi}^{-\pi/2}
+
\int_{-\pi/2}^{0}
+
\int_0^{\pi/2}
+
\int_{\pi/2}^{\pi}
\right)
\left(
\int_{\R^d}|U(T,\boldsymbol{X})|^r\d \boldsymbol{X}
\right)^{q/r}\d T \\
&=
2
\left(
\int_{-\pi/2}^{0}
+
\int_0^{\pi/2}
\right)
\left(
\int_{\R^d}|U(T,\boldsymbol{X})|^r\d \boldsymbol{X}
\right)^{q/r}\d T,
\end{aligned}
\]
which is \eqref{eq_symmetry_trick}.
\end{proof}
We  now return  to the $q$-homogeneous deficit functional defined in~\eqref{eq:q_homog_deficit}.
Since $r>2$, the derivatives at $g_0$ exist:
\begin{equation}\label{eq:frak_derivatives}
    \begin{array}{cc}
        \mathfrak{d}_{q,r}'[g_0](f):=\left.\frac{\partial}{\partial \varepsilon}\right|_{\varepsilon=0} \mathfrak{d}_{q,r}[g_0+\varepsilon f], 
        & \mathfrak{d}_{q,r}''[g_0](f, f):=\left.\frac{\partial^2}{\partial \varepsilon^2}\right|_{\varepsilon=0} \mathfrak{d}_{q,r}[g_0+\varepsilon f].
    \end{array}
\end{equation}
The relationship between these derivatives and those of the 2-homogeneous deficit $\delta_{q,r}$ is as follows.
\begin{lemma}\label{lem:deficit_dictionary}
    It holds that
    \begin{align}
        \frac{\mathfrak{d}_{q,r}'[g_0]}{q\lVert e^{-it\Delta/2}g_0\rVert_{L_t^qL_{\boldsymbol{x}}^r(\R^{1+d})}^{q-1}} &= \frac{\delta_{q,r}'[g_0]}{2\lVert e^{-it\Delta/2}g_0\rVert_{L_t^qL_{\boldsymbol{x}}^r(\R^{1+d})}}=0\,; \label{eq:first_der_dict} \\ 
        \frac{\mathfrak{d}_{q,r}''[g_0]}{q\lVert e^{-it\Delta/2}g_0\rVert_{L_t^qL_{\boldsymbol{x}}^r(\R^{1+d})}^{q-1}}&= \frac{\delta_{q,r}''[g_0]}{2\lVert e^{-it\Delta/2}g_0\rVert_{L_t^qL_{\boldsymbol{x}}^r(\R^{1+d})}}. \label{eq:second_der_dict} 
    \end{align}
    Explicitly, \begin{equation}\label{eq:second_der_dict_explicit}
    \frac{\delta_{q,r}''[g_0]}{2}
    =
    \pi^{\frac{2-q}{q}}
    \left(\frac{2\pi}{r}\right)^{\frac{d(2-q)}{2r}}
    \frac{\mathfrak{d}_{q,r}''[g_0]}{q}.
\end{equation}
\end{lemma}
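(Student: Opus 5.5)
The plan is to write both deficits as functions of two scalar quantities and differentiate at $\varepsilon=0$. Set $A:=\|e^{-it\Delta/2}g_0\|_{L_t^qL_{\boldsymbol x}^r}$, $N(\varepsilon):=\|g_0+\varepsilon f\|_{L^2}$ and $S(\varepsilon):=\|e^{-it\Delta/2}(g_0+\varepsilon f)\|_{L_t^qL_{\boldsymbol x}^r}$. With $K:=A/\|g_0\|_{L^2}$, so that $S(0)=A=KN(0)$, we have
\[
\delta_{q,r}[g_0+\varepsilon f]=K^2N^2-S^2=:\phi_2(\varepsilon),
\qquad
\mathfrak d_{q,r}[g_0+\varepsilon f]=K^qN^q-S^q=:\phi_q(\varepsilon).
\]
Both $N$ and $S$ are twice differentiable in $\varepsilon$. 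For $N$ this is immediate. For $S$ it follows from Proposition~\ref{prop_lensnorms} and Proposition~\ref{prop:symmetry_trick}: $e^{-iT\Hc/2}g_0$ is nonvanishing and $r,q>2$, so differentiation under the integral sign is justified. For $q=2$ the two functionals coincide, so there is nothing to prove.

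Next, introduce $a(\varepsilon):=KN(\varepsilon)$ and $b(\varepsilon):=S(\varepsilon)$, so that $a(0)=b(0)=A$ and $\phi_p=a^p-b^p$ for $p\in\{2,q\}$. The chain rule gives
\[
\phi_p'(0)=pA^{p-1}(a'-b')(0),
\qquad
\phi_p''(0)=pA^{p-1}(a''-b'')(0)+p(p-1)A^{p-2}\bigl((a')^2-(b')^2\bigr)(0).
\]
The first identity gives \eqref{eq:first_der_dict}, apart from the vanishing. To show $\delta_{q,r}'[g_0]=0$, I use that $g_0$ is a critical point of the functional $f\mapsto S(f)/\|f\|_{L^2}$. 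There are two routes. If gaussians maximize, this is immediate, but that is not known in general. The intended route is to use the symmetries: by Appendix~\ref{app_symmetry}, $g_0$ is invariant, up to phase, under the harmonic oscillator flow. I would therefore compute $S'(0)$ directly through the lens representation. Since $e^{-iT(\Hc+d)/2}g_0=g_0$, we get
\[
\frac{d}{d\varepsilon}\Big|_{\varepsilon=0}\int_{-\pi}^{\pi}\|g_0+\varepsilon U_f(T)\|_{L^r}^q\,dT
= q\|g_0\|_{L^r}^{q-r}\,\Re\int_{-\pi}^{\pi}\!\int g_0^{r-1}\,\overline{U_f(T,\boldsymbol X)}\,d\boldsymbol X\,dT ,
\]
with $U_f(T):=e^{-iT(\Hc+d)/2}f$. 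Integrating in $T$ over a full period projects $f$ onto the kernel of the number operator, that is, onto $g_0$. Hence the derivative is proportional to $\Re\langle f,g_0\rangle$, and a direct comparison with $(N^q)'(0)=q\|g_0\|^{q-2}\Re\langle f,g_0\rangle$ then shows that $a'(0)=b'(0)$. Checking this constant, namely that $\|g_0\|_{L^r}^{r}$ relates to $A$ and $\|g_0\|_{L^2}$ exactly as needed, is the one real computation. It follows from $A^q=\tfrac12\cdot 2\pi\|g_0\|_{L^r}^q$ and $\int g_0^{r-1}g_0=\|g_0\|_{L^r}^r$.

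Once $a'(0)=b'(0)$ is known, the quadratic term in $\phi_p''(0)$ drops out. Therefore $\phi_p''(0)/(pA^{p-1})=(a''-b'')(0)$ is independent of $p$, and comparing $p=2$ with $p=q$ yields \eqref{eq:second_der_dict}. The explicit form \eqref{eq:second_der_dict_explicit} then follows by evaluating $A^{q-2}$, which is where the only real work lies. By Propositions~\ref{prop_lensnorms} and~\ref{prop:symmetry_trick},
\[
A^q=\pi\,\|g_0\|_{L^r}^q=\pi\Bigl(\frac{2\pi}{r}\Bigr)^{dq/(2r)},
\]
using $\int e^{-r|\boldsymbol X|^2/2}\,d\boldsymbol X=(2\pi/r)^{d/2}$. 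Consequently
\[
\frac{\delta''_{q,r}[g_0]}{2}=A^{2-q}\,\frac{\mathfrak d''_{q,r}[g_0]}{q}=\pi^{\frac{2-q}{q}}\Bigl(\frac{2\pi}{r}\Bigr)^{\frac{d(2-q)}{2r}}\frac{\mathfrak d''_{q,r}[g_0]}{q}.
\]
The main obstacle I expect is the justification of $a'(0)=b'(0)$, that is, criticality of $g_0$. The projection argument above handles it, and the remaining steps are bookkeeping.
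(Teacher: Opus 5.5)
Your proposal is correct and follows essentially the paper's route. Both deficits are written as $F^p-G^p$ with $F(0)=G(0)$, the identity $F'(0)=G'(0)$ comes from the lens-transform computation, and the explicit constant comes from $\|e^{-it\Delta/2}g_0\|_{L_t^qL_{\boldsymbol x}^r}^q=\pi(2\pi/r)^{dq/(2r)}$. Your projection onto the kernel of the number operator is the same argument as the paper's eigenfunction-wise use of $\int_{-\pi}^{\pi}e^{inT}\,dT=0$ together with $\mathfrak{d}_{q,r}'[g_0](g_0)=0$.

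Two small remarks. The invariance $e^{-iT(\Hc+d)/2}g_0=g_0$ comes from $\Hc g_0=-dg_0$, not from Appendix~\ref{app_symmetry}. For $q=2$, the vanishing of the first variation still needs your general argument; the coincidence $\mathfrak{d}_{2,r}=\delta_{2,r}$ alone does not give it.
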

\begin{proof}
    Letting $F(\varepsilon):=\frac{\|e^{-it\Delta/2}g_0\|_{L_t^qL_{\boldsymbol{x}}^r}}
         {\|g_0\|_{L^2}}
    \|g_0+\varepsilon f\|_{L^2}$ and $G(\varepsilon):=\|e^{-it\Delta/2}(g_0+\varepsilon f)\|_{L_t^qL_{\boldsymbol{x}}^r}$, we have that $\mathfrak{d}_{q,r}[g_0+\varepsilon{f}]=F^q(\varepsilon)-G^q(\varepsilon)$ and $\delta_{q,r}[g_0+\varepsilon{f}]=F^2(\varepsilon)-G^2(\varepsilon)$. The first identity in~\eqref{eq:first_der_dict}  follows by differentiating in $\varepsilon$ and using $F(0)=G(0)$. The fact that such a derivative necessarily vanishes is contained in Christ--Quilodrán~\cite{CQ14}, but we will give a direct proof using the lens transform  in Lemma \ref{lem_second_variation} below. In particular, $F'(0)=G'(0)$, which then  implies~\eqref{eq:second_der_dict}. Expression~\eqref{eq:second_der_dict_explicit} is a consequence of $\left\|e^{-it\Delta/2}g_0\right\|_{L_t^qL_{\boldsymbol{x}}^r(\mathbb{R}^{1+d})}
    =
    \pi^{\frac1q}
    \left(\frac{2\pi}{r}\right)^{\frac{d}{2r}}$, which in turn follows from \eqref{eq_PropGauss}.
\end{proof}

\begin{lemma}\label{lem_second_variation}
Let $(q,r)$  be a Schrödinger-admissible pair with $2<r<\infty$. For every $f\in L^2(\mathbb R^d)$, 
\begin{equation}\label{eq:first_variation_vanishes}
    \mathfrak{d}_{q, r}'[g_0](f)=0.
\end{equation}
Moreover, if
\begin{equation}\label{eq_ortho}
    \int_{\R^d}f(\boldsymbol{X})g_0(\boldsymbol{X})\d \boldsymbol{X}=0,
\end{equation} then the following identity holds:
\begin{equation}\label{eq_second_variation}
    \begin{aligned}
    \mathfrak{d}_{q,r}''[g_0](f,f)
    =&\;
    \pi q\,
    \frac{\|g_0\|_{L^r(\R^d)}^q}{\|g_0\|_{L^2(\R^d)}^2}
    \|f\|_{L^2(\R^d)}^2 -
    \frac{qr}{4}\,
    \|g_0\|_{L^r(\R^d)}^{q-r}
    \int_{-\pi}^{\pi}\int_{\R^d}
    g_0^{r-2}(\boldsymbol{X})|(e^{-iT(\Hc+d)/2}f)(\boldsymbol{X})|^2
    \d \boldsymbol{X}\d T \\
    &-
    \frac{q(q-r)}{2}\,
    \|g_0\|_{L^r(\R^d)}^{q-2r}
    \int_{-\pi}^{\pi}
    \left(
    \Re\int_{\R^d}g_0^{r-1}(\boldsymbol{X})(e^{-iT(\Hc+d)/2}f)(\boldsymbol{X})\d \boldsymbol{X}
    \right)^2
    \d T.
    \end{aligned}
\end{equation}
\end{lemma}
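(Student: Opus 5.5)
We combine Proposition~\ref{prop_lensnorms} with Proposition~\ref{prop:symmetry_trick} to rewrite the $q$-homogeneous deficit on the compactified cylinder:
\[
\mathfrak d_{q,r}[f]=\frac{A}{\|g_0\|_{L^2}^q}\|f\|_{L^2}^q-\frac12\int_{-\pi}^{\pi}\Big(\int_{\R^d}|U_f(T,\boldsymbol X)|^r\d\boldsymbol X\Big)^{q/r}\d T,
\qquad U_f:=e^{-iT(\Hc+d)/2}f .
\]
Here $A=\frac12\int_{-\pi}^{\pi}\|g_0\|_{L^r}^q\d T=\pi\|g_0\|_{L^r}^q$, since $-(\Hc+d)g_0=0$ gives $U_{g_0}=g_0$, which is independent of $T$. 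By linearity, $U_{g_0+\varepsilon f}=g_0+\varepsilon U_f$. The plan is to expand both terms to second order in $\varepsilon$; the only tool needed is the classical expansion of $|a+\varepsilon b|^r$ for $a=g_0>0$ and $r>2$, which is legitimate by dominated convergence because $g_0$ never vanishes.

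\textbf{First-order terms.} We have $\frac{d}{d\varepsilon}\|g_0+\varepsilon f\|^q\big|_0=q\|g_0\|^{q-2}\Re\langle g_0,f\rangle$. For the second term, $\frac{d}{d\varepsilon}\int|g_0+\varepsilon U_f|^r\big|_0=r\Re\int g_0^{r-1}U_f$. We expand $f=\sum_n c_nf_n$ in eigenfunctions of the number operator, so that $U_f=\sum_n c_ne^{inT}f_n$. Integrating over $T\in(-\pi,\pi)$ then kills every mode with $n\ne0$, and the eigenspace for $n=0$ is spanned by $g_0$. Hence
\[
\int_{-\pi}^{\pi}\Re\int g_0^{r-1}U_f\d T=2\pi\Re\Big(c_0\int g_0^r\Big)=2\pi\,\frac{\|g_0\|_r^r}{\|g_0\|_2^2}\Re\langle f,g_0\rangle .
\]
Multiplying by $\frac12\cdot\frac qr\|g_0\|_r^{q-r}\cdot r$ gives $\pi q\|g_0\|_r^q\|g_0\|_2^{-2}\Re\langle f,g_0\rangle$. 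This exactly matches the derivative of the first term, which is $\frac{\pi\|g_0\|_r^q}{\|g_0\|_2^q}\,q\|g_0\|_2^{q-2}\Re\langle g_0,f\rangle$. This proves \eqref{eq:first_variation_vanishes}.

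\textbf{Second-order terms under \eqref{eq_ortho}.} For the norm term, $\partial_\varepsilon^2\|g_0+\varepsilon f\|_2^q|_0=q\|g_0\|^{q-2}\|f\|^2+q(q-2)\|g_0\|^{q-4}(\Re\langle g_0,f\rangle)^2$, and the second piece vanishes by \eqref{eq_ortho}. This gives the first term of \eqref{eq_second_variation}. For the Strichartz term, set $\Phi(\varepsilon)=\int|g_0+\varepsilon U|^r$ and write $N=\Phi^{q/r}$. Then
\[
N''(0)=\frac qr\Phi(0)^{q/r-1}\Phi''(0)+\frac qr\Big(\frac qr-1\Big)\Phi(0)^{q/r-2}\Phi'(0)^2 .
\]
The ingredients are
\[
\Phi'(0)=r\Re\int g_0^{r-1}U,\qquad
\Phi''(0)=r\int g_0^{r-2}|U|^2+r(r-2)\int g_0^{r-2}(\Re U)^2 .
\]

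\textbf{Main obstacle.} The formula \eqref{eq_second_variation} contains only $\frac{qr}{4}\int g_0^{r-2}|U|^2$, so the $(\Re U)^2$ term must be converted. We write $(\Re U)^2=\frac12|U|^2+\frac12\Re(U^2)$. After integrating in $T$, $U^2=\sum_{n,m}c_nc_me^{i(n+m)T}f_nf_m$ retains only the term $n=m=0$, because all indices are nonnegative. That term is zero, since $c_0=0$ by \eqref{eq_ortho}. Therefore $\int_{-\pi}^{\pi}\!\int g_0^{r-2}(\Re U)^2=\frac12\int\!\int g_0^{r-2}|U|^2$. Consequently the time-integrated $\Phi''$ equals $\frac{r^2}{2}\int\!\int g_0^{r-2}|U|^2$.

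\textbf{Assembling the constants.} Multiplying by $\frac12\cdot\frac qr\|g_0\|_r^{q-r}$ gives $\frac{qr}{4}$, as required. The $\Phi'^2$ term gives $\frac12\cdot\frac qr\cdot\frac{q-r}{r}\cdot r^2\|g_0\|_r^{q-2r}=\frac{q(q-r)}2\|g_0\|_r^{q-2r}$ times the stated integral. Both match \eqref{eq_second_variation}. The main point requiring care is precisely this nonnegative-frequency cancellation argument, together with justifying that differentiation may be taken under the $T$ and $\boldsymbol X$ integrals; the latter follows from $|U|\in L^\infty_TL^2\cap L^q_TL^r$ and the Taylor remainder bound for $|a+b|^r$.
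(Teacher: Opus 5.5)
Your proposal is correct and follows essentially the same route as the paper. You rewrite the deficit on $(-\pi,\pi)$ via the lens transform, expand $|g_0+\varepsilon U|^r$ to second order, and kill both the nonzero modes in the first variation and the $\Re(U^2)$ term by integrating $e^{i(n_1+n_2)T}$ over a full period, which uses $c_0=0$. The only cosmetic difference is in the first variation: you match the $g_0$-component contributions of the two terms explicitly, whereas the paper disposes of the $g_0$ direction by homogeneity and checks eigenfunctions with $n>0$ separately.
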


\begin{proof}
Using Propositions~\ref{prop_lensnorms} and \ref{prop:symmetry_trick}, we obtain
\begin{equation}\label{eq:deficit_post_lens}
\mathfrak{d}_{q,r}[f]
=
\frac12
\frac{\|e^{-iT(\Hc+d)/2}g_0\|_{L_T^qL_{\boldsymbol{X}}^r((-\pi,\pi)\times\R^d)}^q}
     {\|g_0\|_{L^2(\R^d)}^q}
\|f\|_{L^2(\R^d)}^q
-
\frac12
\|e^{-iT(\Hc+d)/2}f\|_{L_T^qL_{\boldsymbol{X}}^r((-\pi,\pi)\times\R^d)}^q.
\end{equation}
Since  $\mathfrak{d}_{q,r}'[g_0](g_0)=0$, it will suffice to prove~\eqref{eq:first_variation_vanishes} for an eigenfunction $f=f_n$ satisfying $-\frac{\mathcal H+d}{2}f_n=n f_n$, for some $n\in\mathbb Z_{>0}$. In this case,~\eqref{eq:first_variation_vanishes} is a consequence of
\begin{equation}\label{eq:first_var_vanishes_proof}
    \begin{aligned}
    &\left.\frac{\partial}{\partial \varepsilon}\right|_{\varepsilon=0}
    \int_{-\pi}^{\pi}
    \left(\int_{\mathbb R^d}
    \left|g_0+\varepsilon e^{inT}f_n\right|^r\,\d \boldsymbol{X}\right)^{q/r}\,\d T \\
    &\qquad =q\|g_0\|_{L^r}^{q-r}
    \Re\int_{-\pi}^{\pi}\int_{\mathbb R^d}
    g_0^{r-1}(\boldsymbol{X})e^{inT}f_n(\boldsymbol{X})\,\d \boldsymbol{X}\,\d T=0,
    \end{aligned}
\end{equation}
where the last identity follows from $\int_{-\pi}^{\pi} e^{inT}\, \d T=0$. 

We now turn to the proof of~\eqref{eq_second_variation}. Here, we consider a general $f$ satisfying~\eqref{eq_ortho}, not necessarily a single eigenfunction. The second derivative of the first summand of~\eqref{eq:deficit_post_lens} is simply
\[
\pi q\,
\frac{\|g_0\|_{L^r(\R^d)}^q}{\|g_0\|_{L^2(\R^d)}^2}
\|f\|_{L^2(\R^d)}^2.
\]
For the second summand, let $U(T,\boldsymbol{X})=(e^{-iT(\Hc+d)/2}f)(\boldsymbol{X})$, and expand
\[
|g_0+\varepsilon U|^r
=
g_0^r
+ r\varepsilon g_0^{r-1}\Re U
+ \frac{\varepsilon^2}{2}g_0^{r-2}
\left(
\frac{r^2}{2}|U|^2+\frac{r(r-2)}{2}\Re(U^2)
\right)
+ o(\varepsilon^2).
\]
Inserting this into the mixed norm and differentiating twice yields three terms.
The one containing $\Re(U^2)$ vanishes after integration in $T$, because
\eqref{eq_ortho} removes the ground state from the expansion of $f$ into eigenfunctions of the number operator $-(\Hc +d)/2$. Therefore all summands contain exponentials $e^{i(n_1+n_2)T}$ with $n_1, n_2\in\mathbb Z_{>0}$, which integrate to zero on $(-\pi,\pi)$. The remaining two terms coincide with the last two summands
in \eqref{eq_second_variation}.
\end{proof}

\subsection{Laguerre and spherical-harmonic modes}\label{subsection:laguerre}

Recall the notation
$\nu:=\frac d2-1$.
For $d\geq 2$, let $\{Y_{m, j}\}$ denote a complete orthogonal system of spherical harmonics of degree $m$ normalized via
\[
\int_{\Ss^{d-1}}|Y_{m, j}(\boldsymbol{\omega})|^2\d\sigma(\boldsymbol{\omega})=1.
\]
We then define
\begin{equation}\label{eq_fkm}
    f_{k,m,j}(\rho\boldsymbol{\omega})
    :=
    L_k^{\nu+m}(\rho^2)Y_{m,j}(\boldsymbol{\omega})\rho^m e^{-\rho^2/2},
    \qquad
    k,m\in \Z_{\geq 0}, \, j\in\{1,\ldots,N(d,m)\}.
\end{equation}
Here, $N(d, m):=\binom{m+d-1}{m}-\binom{m+d-3}{m-2}$ denotes the degeneracy of the spherical harmonics of degree $m$, where the second binomial coefficient is understood to vanish for $m<2$. Moreover, $L_k^\alpha$ denotes the generalized Laguerre polynomial, normalized so that $L_k^\al(0)=\binom{k+\al}k$, or equivalently,
$$
\int_0^\infty L_k^\al(\rho)^2 \rho^\al e^{-\rho} \d \rho = \al!L_k^\al(0) = \al!\binom{k+\al}k,
$$
where $\al!:=\Gamma(\al+1)$;
see \cite{AIK}. In particular, we obtain for $\alpha:=\nu+m$ $$\|f_{k,m,j}\|_{L^2(\R^d)}^2= \frac{1}{2}\int_0^\infty L_k^{\al}(s)^2 s^{\al} e^{-s}\, \d s = \frac{\al!}2 L_{k}^\al(0) .$$
These functions form a complete system of
eigenfunctions for the number operator $-(\Hc+d)/2$, satisfying
\begin{equation}\label{eq_fkm_eigen}
    e^{-iT(\Hc+d)/2}f_{k,m,j}=e^{i(2k+m)T}f_{k,m,j}.
\end{equation}

\begin{remark}\label{rem_d1}
The case $d=1$ can be described via the same notation if one endows
$\Ss^0=\{-1,+1\}$ with the counting measure assigning mass $1$ to each
point. Then the only spherical harmonics are
\[
Y_0\equiv 1,
\qquad
Y_1(\omega)=\omega,
\]
corresponding respectively to even and odd functions. In particular, the functions $f_{k,m,j}$ do not exist for $m\geq 2$. We can therefore drop the index $j$, and the resulting functions $f_{k,0}, f_{k,1}$ are  proportional to $h_{2k}, h_{2k+1}$, respectively, 
where
\begin{equation}\label{eq:hermite_functions}
    h_n(s)=\frac{(-1)^n}{\sqrt{2^n n!}}e^{s^2/2}\frac{\d^n}{\d s^n}(e^{-s^2})
\end{equation}
denote the usual Hermite functions.
\end{remark}
In Appendix \ref{app_symmetry}, the tangent subspace of the manifold $\Gg$ at $f=g_0$ is explicitly given by
\begin{equation}\label{eq:tangent_space_main_body}
    \begin{split}
        T_{g_0}\Gg &= \operatorname*{span}_{\mathbb C} \{g_0, x_1 g_0, \ldots , x_d g_0, |\boldsymbol{x}|^2 g_0\} \\ 
        &=\operatorname*{span}_{\mathbb C} \{f_{k, m,j}\ :\ (k=0, m\in\{0,1\}) \text{ or } (k=1, m=0)\}.
    \end{split}
\end{equation}
We conclude that $f \perp T_{g_0} \Gg$ if and only if $f$  admits an orthogonal expansion of the form
$$
f = {\sum_{k+m\geq 2}\sum_{j=1}^{N(d,m)}} a_{k,m,j} f_{k,m,j}.
$$
Plugging this expansion into~\eqref{eq_second_variation}, we obtain
\begin{align}\label{eq_orthogonal_decomp}
{\mathfrak{d}}_{q,r}''[g_0](f,f) ={\sum_{k+m\geq 2}\sum_{j=1}^{N(d,m)}} |a_{k,m, j}|^2 {\mathfrak{d}}_{q,r}''[g_0](f_{k,m,j},f_{k,m,j}).
\end{align}
In other words, the quadratic form ${\mathfrak{d}}_{q,r}''[g_0]$ is diagonal in the basis $\{f_{k,m,j}\}$. We proceed to verify this  for the last summand in~\eqref{eq_second_variation}, the proof for the remaining two summands being straightforward. Considering the real bilinear form 
\begin{equation}\label{eq:diagonalize_second_var}
    B(f, g):=\int_{-\pi}^\pi
    Lf(T)\,Lg(T)
    \d T,\quad \text{where}\quad Lf(T):=\Re\int_{\R^d}g_0^{r-1}(\boldsymbol{X})(e^{-iT(\Hc+d)/2}f)(\boldsymbol{X})\d \boldsymbol{X},
\end{equation}
we need to prove that $B(f_{k, m, j}, f_{k', m', j'})=0$ if $k\ne k'$ or $m\ne m'$ or $j\ne j'$. Now, since 
\begin{equation*}
    Lf_{k, m, j}(T)=\cos((2k+m)T)\int_0^\infty  L_k^{\nu+m}(\rho^2)\rho^{d+m-1}{e^{-\frac r2 \rho^2}}\, \d\rho \int_{\mathbb S^{d-1}} Y_{m, j}(\boldsymbol{\omega})\, \d\sigma(\boldsymbol{\omega}), 
\end{equation*}
we have that $Lf_{k, m, j}=0$ unless $m=0$, which then forces $j=1$. In that case, by orthogonality of the cosines on $(-\pi, \pi)$, we have $B(f_{k,0,1}, f_{k',0,1})=0$ unless $2k=2k'$, that is, $k=k'$. 

\noindent\textbf{Notational convention}. As shown by~\eqref{eq_orthogonal_decomp}, the degeneracy index $j$ of the spherical harmonics plays no role in the analysis. We will systematically omit it from now on. 

\begin{remark}\label{rem:tensor_hermite_no_good}
    In dimensions $d>1$, a commonly used basis of eigenfunctions for the number operator $-(\Hc + d)/2$ is obtained by taking tensor products of the Hermite functions~\eqref{eq:hermite_functions}. This basis is not suitable for our purposes because it does not diagonalize ${\mathfrak{d}}_{q,r}''[g_0]$.
\end{remark}

\begin{definition}\label{def:bottom_of_spectrum}
    Given a  Schrödinger-admissible pair $(q, r)$,  let 
    \begin{equation}\label{eq:bottom_of_spectrum}
        \mathfrak{S}_{q,r}(d):=\inf_{k+m\ge2} \frac{\mathfrak{d}''_{q, r}[g_0](f_{k,m}, f_{k,m} )}{q\lVert f_{k, m}\rVert_{L^2(\R^d)}^2},
    \end{equation}
    to which we refer as  the \emph{bottom of the spectrum} of $\mathfrak{d}''_{q, r}[g_0]$.        
\end{definition}
\begin{remark}
In view of Lemma~\ref{lem:deficit_dictionary}, the spectral gap $\Lambda_{q, r}(d)$ defined in \eqref{eq_SpecGap} is related to $\mathfrak{S}_{q, r}(d)$ as follows:
\begin{equation}\label{eq:spectral_gap_dictionary}
    \Lambda_{q,r}(d)
    =
    \pi^{\frac{2-q}{q}}
    \left(\frac{2\pi}{r}\right)^{\frac{d(2-q)}{2r}}
    \mathfrak{S}_{q,r}(d).
\end{equation}
\end{remark}
The quantity $\mathfrak{S}_{q, r}(d)$ determines the local maximizer nature of gaussians. Indeed, given $f\perp T_{g_0}\mathfrak{G}$, the following expansion holds:
\begin{equation}\label{eq:taylor_second_order}
    \mathfrak{d}_{q,r}[g_0+\varepsilon f]=\mathfrak{d}''_{q,r}[g_0](f,f)\frac{\varepsilon^2}{2} + o(\varepsilon^2). 
\end{equation}
As such, the local behaviour of $\mathfrak{d}_{q, r}$ at $g_0$ is determined by the sign of $\mathfrak{d}''_{q,r}[g_0]$, which coincides with the sign of $\mathfrak{S}_{q,r}(d)$.

\begin{proposition}\label{prop_modes}
Let $(q,r)$ be a Schrödinger-admissible pair. For every $(k,m) \in \Z^2_{\geq 0}$ with $k+m\geq 2$, we have
\begin{equation}\label{eq_prop_modes}
\frac{\pi^{\nu}}{\|g_0\|_{L^r(\R^d)}^q}
    \frac{{\mathfrak{d}}_{q,r}''[g_0](f_{k,m},f_{k,m})}
         {q\|f_{k,m}\|_{L^2(\R^d)}^2}
    =
    1-p_{k,m}(\tfrac2r).
\end{equation}
Here, $\nu=\frac d2-1$ and
\begin{equation}\label{eq_pkm_r}
    \begin{aligned}
    p_{k,m}(\lambda)
    :=&\;
    \lambda^{m-1}
    \sum_{j=0}^k
    \binom{k}{j}\binom{k+\nu+m}{k-j}
    \lambda^{2j}
    \left(1-\lambda\right)^{2k-2j} +
     \left(\frac{\nu+2}{\nu+1}-\frac1{\lambda}\right)
    \left(1-\lambda\right)^{2k-1}
    \binom{k+\nu}{k}\,\boldsymbol{\delta}_{m,0},
    \end{aligned}
\end{equation}
    where   $\boldsymbol{\delta}_{m,0}$ denotes the Kronecker delta.
\end{proposition}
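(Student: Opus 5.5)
We plug $f=f_{k,m}$ (which satisfies \eqref{eq_ortho} since $k+m\ge2$) into the formula \eqref{eq_second_variation} of Lemma~\ref{lem_second_variation} and evaluate each of the three summands explicitly. By \eqref{eq_fkm_eigen}, $e^{-iT(\Hc+d)/2}f_{k,m}=e^{i(2k+m)T}f_{k,m}$. The time modulus is therefore trivial in the second summand, which becomes $2\pi\int g_0^{r-2}|f_{k,m}|^2$. In the third summand the time dependence is $\cos((2k+m)T)$ times a spatial integral. That integral vanishes unless $m=0$, which accounts for the Kronecker delta. When $m=0$ it integrates to $\pi(\int g_0^{r-1}f_{k,0})^2$.

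For the computation we use $\|g_0\|_{L^r}^r=(2\pi/r)^{d/2}$ and $\|g_0\|_{L^2}^2=\pi^{d/2}$, and put $\lambda=2/r$. After dividing by $q\|f_{k,m}\|^2\|g_0\|_{L^r}^q/\pi^\nu$, the first summand gives exactly $1$, since $\pi\cdot\pi^{\nu}/\pi^{d/2}=1$. The second summand reduces to
\[
\frac{r}{2}\,\frac{\pi^{\nu+1}}{\|g_0\|_{L^r}^r}\,\frac{\int g_0^{r-2}|f_{k,m}|^2}{\|f_{k,m}\|^2}.
\]
In polar coordinates and with $s=\rho^2$, the radial integral is $\frac12\int_0^\infty L_k^\alpha(s)^2 s^\alpha e^{-(r-1)s}\,ds$, where $\alpha=\nu+m$.

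The key step is to evaluate this Laguerre integral against the mismatched weight $e^{-(r-1)s}$. We rescale $s=\lambda u/(2-\lambda)$, using $r-1=(2-\lambda)/\lambda$. We then need $L_k^\alpha(cu)$ with $c=\lambda/(2-\lambda)$, which the multiplication theorem gives as $L_k^\alpha(cu)=\sum_i\binom{k+\alpha}{k-i}c^i(1-c)^{k-i}L_i^\alpha(u)$. Laguerre orthogonality then collapses the square to a single sum $\sum_i\binom{k+\alpha}{k-i}^2c^{2i}(1-c)^{2k-2i}\alpha!\binom{i+\alpha}{i}$. Combining the prefactors, this should reorganize, via a Vandermonde-type binomial identity, into $\lambda^{m-1}\sum_j\binom kj\binom{k+\alpha}{k-j}\lambda^{2j}(1-\lambda)^{2k-2j}$.

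An alternative that may be cleaner is to use the generating function $\sum_k L_k^\alpha(s)t^k=(1-t)^{-\alpha-1}e^{-st/(1-t)}$ twice. The Gaussian integral in $s$ then becomes explicit, and the coefficient of $t^k\bar t^k$ can be matched with the claimed sum. I would try the generating-function route first, since it turns the identification into comparing coefficients of a rational function.

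For the third summand with $m=0$, the same generating function gives
\[
\int_0^\infty L_k^\nu(s)s^\nu e^{-rs/2}\,ds=\nu!\binom{k+\nu}{k}\left(\tfrac{2}{r}\right)^{\nu+1}(1-\tfrac2r)^k
\]
(possibly up to sign), a standard Laplace transform. Squaring produces $(1-\lambda)^{2k}$. Dividing by $\|f_{k,0}\|^2=\frac{\nu!}{2}\binom{k+\nu}k$ leaves a single factor $\binom{k+\nu}{k}$. The coefficient is $\frac{q(q-r)}{2}$ against $\frac{qr}4$, and with $q/r$ expressed through admissibility, $\frac{q}{r}=\frac{2}{d(1-\lambda)}$ by \eqref{eq_admissible}, it should combine into $\left(\frac{\nu+2}{\nu+1}-\frac1\lambda\right)(1-\lambda)^{2k-1}$. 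The bookkeeping of exactly this coefficient, and the binomial identity in the second summand, are where I expect the main difficulty. I would verify both against the stated low cases $p_{1,1}$, $p_{0,2}$ and $p_{2,0}$ as a check. The case $d=1$ follows identically using Remark~\ref{rem_d1}.
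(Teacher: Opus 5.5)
Your route is the paper's: insert $f_{k,m}$ into \eqref{eq_second_variation}, remove the time integrals with \eqref{eq_fkm_eigen}, expand a dilated Laguerre polynomial with the multiplication theorem, collapse it with orthogonality, and treat the $m=0$ term by the same dilation. However, the key step is built on a wrong integral and fails as written.

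In the second summand the weight is not $e^{-(r-1)s}$. Since $g_0^{r-2}|f_{k,m}|^2$ contains $e^{-(r-2)\rho^2/2}\,e^{-\rho^2}=e^{-r\rho^2/2}$, setting $s=\rho^2$ gives $\frac12\int_0^\infty L_k^\alpha(s)^2 s^\alpha e^{-rs/2}\,\d s$. This is the same weight $e^{-rs/2}=e^{-s/\lambda}$ that you correctly used in the third summand. With your weight the dilation parameter becomes $c=\lambda/(2-\lambda)$, and the identity you hope to reach by a ``Vandermonde-type'' manipulation is false. Already for $(k,m)=(0,2)$, your normalized second summand equals $\lambda(2-\lambda)^{-\nu-3}$ rather than $p_{0,2}(\lambda)=\lambda$. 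No binomial identity can repair this; the low-case check you planned would have exposed it.

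With the correct weight, substitute $s=\lambda u$ and apply the multiplication theorem with $c=\lambda$:
\[
\int_0^\infty L_k^\alpha(s)^2 s^\alpha e^{-s/\lambda}\,\d s=\lambda^{\alpha+1}\alpha!\sum_{i=0}^k\binom{k+\alpha}{k-i}^2\binom{i+\alpha}{i}\lambda^{2i}(1-\lambda)^{2k-2i}.
\]
No Vandermonde identity is needed. The elementary identity $\binom{k+\alpha}{k-i}\binom{i+\alpha}{i}=\binom{k+\alpha}{k}\binom{k}{i}$ rewrites the right-hand side as $\lambda^{\alpha+1}\alpha!\binom{k+\alpha}{k}\sum_{i}\binom ki\binom{k+\alpha}{k-i}\lambda^{2i}(1-\lambda)^{2k-2i}$. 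The prefactor $\frac r2\,\pi^{\nu+1}\|g_0\|_{L^r}^{-r}=\lambda^{-\nu-2}$, together with $\|f_{k,m}\|_{L^2}^2=\frac{\alpha!}{2}\binom{k+\alpha}{k}$, then gives exactly the first part of $p_{k,m}(\lambda)$, namely $\lambda^{m-1}\sum_j\binom kj\binom{k+\nu+m}{k-j}\lambda^{2j}(1-\lambda)^{2k-2j}$. This is the paper's computation.

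There is also a smaller slip in the third summand. Admissibility gives $q=\frac{4}{d(1-\lambda)}$, so $\frac qr=\frac{2\lambda}{d(1-\lambda)}$, not $\frac{2}{d(1-\lambda)}$. The relation you actually need is
\[
\frac{q-r}{2}=\frac{1}{(\nu+1)(1-\lambda)}-\frac1\lambda=\frac{\frac{\nu+2}{\nu+1}-\frac1\lambda}{1-\lambda}.
\]
Once this is in place, your Laplace-transform evaluation of the $m=0$ projection correctly produces the $\boldsymbol{\delta}_{m,0}$ term.
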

\begin{proof}
Set
$\alpha=\nu+m$ and $\lambda=\frac2r$, and recall $\lVert g_0 \rVert_{L^r(\R^d)}^r = (\tfrac{2\pi}r)^{\frac d2}$. Direct computations reveal that
\begin{equation}\label{eq:good_bad_ugly}
        \begin{split}
            \frac{{\mathfrak{d}}_{q,r}''[g_0](f_{k, m}, f_{k, m})}{q {\lVert g_0 \rVert_{L^r}^q}}
           &= \frac{\pi}{\lVert g_0 \rVert_{L^2}^2} \|f_{k,m}\|_{L^2}^2   - \frac{r}{4}\lVert g_0\rVert_{L^r}^{-r} 2\pi \int_{0}^\infty L_k^{\nu+m}(\rho^2)^2 \rho^{2\nu+2m+1} e^{-\frac r 2\rho^2}\, \d\rho \\
            &\quad -\frac{\pi(q-r)}{2}\lVert g_0 \rVert_{L^r}^{-2r}  \left( \int_{0}^\infty  L_k^{\nu+m}(\rho^2)\rho^{2\nu+m+1} e^{-\frac r 2 \rho^2}\, \d\rho\right)^2\left( \int_{\mathbb S^{d-1}} Y_m\, \d\sigma\right)^2 \\
            & =  \pi^{-\nu}   \|f_{k,m}\|_{L^2}^2 - \frac12 \pi^{-\nu}  \la^{m-1} \int_{0}^\infty L_k^{\al}( \la s)^2 s^{\al} e^{-s}\, \d s \\
            & \quad -\frac{(q-r)\pi^{-\nu}}{4\nu!}   \left( \int_{0}^\infty  L_k^{\nu}(\la s)s^\nu e^{-s}\, \d s\right)^2\boldsymbol{\delta}_{m,0},      
        \end{split}
    \end{equation}
    where  we applied the change of variables $s=\rho^2/\la$.
   It follows from the dilation formula  \cite[Eq. (2.7)]{AIK} that
    \begin{align}\label{id:multiformulaLag}
        \frac{L_k^\al(\la t)}{L_k^\al(0)} = \sum_{j=0}^k \binom k j \la^j (1-\la)^{k-j} \frac{L_j^\al(t)}{L_j^\al(0)}.
    \end{align}
Since $L_k^\al(0)=\binom{k+\al} k$, this implies that 
    $\int_{0}^\infty  L_k^{\nu}(\la s)s^\nu e^{-s}\, \d s = \nu! (1-\la)^k L_k^\nu(0)$, and moreover
    $$
    \int_0^\infty L_k^{\al}(\la s)^2 s^{\al} e^{-s}\, \d s = \al!L_k^\al(0)\sum_{j=0}^k \binom k j \binom {k+\al} {k-j}\la^{2j}(1-\la)^{2k-2j}.
    $$
    Using that $\|f_{k,m}\|_{L^2}^2= \frac{\al!}2 L_{k}^\al(0) $ and that $\frac{q-r}2 = 
    \frac{\frac{\nu+2}{\nu+1}-\frac1{\la}}{1-\la}$, we finally obtain
    \begin{align}\label{eq:normalized_second_variation} 
         &\frac{\pi^{\nu}}{\lVert g_0 \rVert_{L^r}^q}\frac{{\mathfrak{d}}_{q,r}''[g_0](f_{k, m}, f_{k, m})}{q \|f_{k,m}\|_{L^2}^2}   \\\notag
         & = 1  -  {\la^{m-1}} \sum_{j=0}^k \binom k j \binom {k+\al} {k-j}{\la^{2j}}(1-\la)^{2k-2j} -\frac{\frac{\nu+2}{\nu+1}-\frac1{\la}}{1-\la} (1-\la)^{2k}L_k^\nu(0)\boldsymbol{\delta}_{m,0},
       \end{align}
       which concludes the proof of the proposition.
\end{proof}

\section{Local analysis close to gaussians: Proofs of Theorems~\ref{thm_main} and~\ref{thm_gap}}\label{sec_proofs}

Rather than working with the spatial dimension $d$ and the integrability parameter $r$, it will be convenient to switch to
\[
\nu=\frac d2-1,
\qquad
\lambda=\frac2r.
\]
Note that $\la \in (0,1)$. For $\nu\ge\frac12$, corresponding to $d\ge 3$, we additionally have $\la\in [\tfrac{\nu}{\nu+1},1)$, corresponding to the endpoint condition on $r$. 

In order to prove both Theorems~\ref{thm_main} and ~\ref{thm_gap}, we will study the quantity 
\begin{equation}\label{eq:inf_pkm}
\widetilde{\mathfrak{S}}_{q,r}:=  \inf_{k+m\geq 2} (1-p_{k,m}(\la)),
\end{equation}
which, as observed in Proposition~\ref{prop_modes}, coincides with the bottom of the spectrum of $\mathfrak{d}_{q, r}''[g_0]$ up to a strictly positive multiplicative constant. For the parameter range in Theorem~\ref{thm_main} (b), we will prove that $\widetilde{\mathfrak{S}}_{q,r}<0$.
Consequently, gaussians are not local maximizers for those cases, namely:\footnote{
We postpone the analysis of the case $(q,r,d)=(4,\infty,1)$ to \S\ref{sec_d1ri}.}
\begin{equation}\label{eq:gaussians_not_maximizers_nu}
\begin{array}{cc}
    \nu=-\frac12,& \rho_1<\frac2\lambda\le \infty, \\
    \nu=0, & \rho_2<\frac2\lambda<\infty, \\
    \nu=\frac12, & \rho_3<\frac2\lambda \le 6,\\
    \nu=1, & \rho_4<\frac2\lambda \le 4.
\end{array}
\end{equation}
We will also verify that $\widetilde{\mathfrak{S}}_{q,r}\ge 0$ in the remaining cases. More precisely, $\widetilde{\mathfrak{S}}_{q,r}= 0$ only if $d\in\{1,2,3,4,5\}$ and $r=\rho_d$, i.e., $\nu\in\{-\frac12, 0, \frac12, 1, \frac32\}$ and $\lambda=\frac2{\rho_d}$. In these exceptional cases, as observed in Remark \ref{rem_postmainthm} (c), we do not know whether gaussians are local maximizers. Finally, in all remaining cases, ${\widetilde{\mathfrak{S}}}_{q,r}>0$, which leads to Theorem~\ref{thm_main} (a). Along the way, we will compute the quantity ${\widetilde{\mathfrak{S}}}_{q,r}$ exactly, leading to the proof of Theorem~\ref{thm_gap}. To facilitate the verification of some upcoming algebraic identities  and the nonvanishing of certain polynomials, we include the relevant PARI/GP code \cite{gp} in Appendix~\ref{app_code}.

Given $k\geq 0$ and $\alpha \geq -1/2$, we define the polynomial
\begin{equation}\label{eq_Fka}
    F_{k,\alpha}(\lambda)
    :=
    \sum_{j=0}^k
    \binom{k}{j}\binom{k+\alpha}{k-j}
    \lambda^{2j}(1-\lambda)^{2k-2j}.
\end{equation}
From Proposition~\ref{prop_modes}, we have that 
\[
p_{k,m}(\lambda)=\lambda^{m-1}F_{k, \nu+m}(\lambda)+\left(\frac{\nu+2}{\nu+1}-\frac1\lambda\right)
    \left(1-\lambda\right)^{2k-1}
    \binom{k+\nu}{k}\boldsymbol{\delta}_{m,0}.
    \]    
We reduce the analysis of~\eqref{eq:inf_pkm} to the study of $1-p_{k,0}$ and $1-p_{k,1}$, via the following monotonicity lemma.\footnote{We remark that the proof of Lemma \ref{lem_master}  yields an alternative argument for the radialization result contained in~\cite[Proof of Theorem 1.2, Step 2]{GN22}.}
\begin{lemma}\label{lem_master}
The following monotonicity statements hold.
\begin{enumerate}
\item[(i)] Let $0<\la<1$, $k\geq 0$,  and $\al \geq 0$. Then
\[
\lambda^{\alpha+1} F_{k,\alpha+1}(\lambda) < \lambda^{\alpha} F_{k,\alpha}(\lambda).
\]

\item[(ii)] Let $k\geq 0$. For $\alpha\geq 0$, let
$
\frac{\alpha}{\alpha+2} < \lambda < 1,
$
while for $-\frac12 \leq \al<0$, let $
\frac{\al+1}{\al+3} \leq \lambda < 1.
$
Then
$$
F_{k+1,\alpha}(\lambda) < F_{k,\alpha}(\lambda).
$$
Moreover, if $\alpha>0$ and $\lambda=\frac{\alpha}{\alpha+2}$, then 
$F_{1,\alpha}(\lambda)=F_{0,\alpha}(\lambda)$, while for $k\geq 1$ we have
\[
F_{k+1,\alpha}(\lambda)<F_{k,\alpha}(\lambda).
\]
\end{enumerate}
\end{lemma}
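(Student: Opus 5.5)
The plan is to treat both parts of Lemma~\ref{lem_master} as sign statements about explicit polynomials, and to get them from an integral representation of $F_{k,\alpha}$. From the proof of Proposition~\ref{prop_modes} we already have
\[
\alpha!\,L_k^\alpha(0)\,F_{k,\alpha}(\lambda)=\int_0^\infty L_k^\alpha(\lambda s)^2 s^\alpha e^{-s}\d s .
\]
Equivalently, after substituting $t=\lambda s$,
\[
\lambda^{\alpha}F_{k,\alpha}(\lambda)=\frac{1}{\lambda\,\alpha!\,L_k^\alpha(0)}\int_0^\infty L_k^\alpha(t)^2\,t^\alpha e^{-t}\,e^{-(\frac1\lambda-1)t}\d t .
\]
So $\lambda^\alpha F_{k,\alpha}(\lambda)$ is, up to the factor $\lambda^{-1}$, the expectation of the decreasing weight $e^{-\mu t}$, with $\mu=\frac1\lambda-1>0$, against the probability density $L_k^\alpha(t)^2t^\alpha e^{-t}/(\alpha!L_k^\alpha(0))$.

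For part (i), I would show that passing from $\alpha$ to $\alpha+1$ pushes this probability measure to the right, in the sense that the Laplace transform $\mu\mapsto\int e^{-\mu t}\,(\cdot)$ strictly decreases for every $\mu>0$. The intended tool is the contiguous relation $L_k^{\alpha}=L_k^{\alpha+1}-L_{k-1}^{\alpha+1}$, together with the normalization identities. Expanding $L_k^{\alpha+1}(t)^2t^{\alpha+1}$ in terms of the $\alpha$-family, the difference of the two Laplace transforms should become a finite sum of explicit terms in $\lambda$ and $1-\lambda$. If that turns out messy, I would use the fallback directly on the polynomial. Write
\[
F_{k,\alpha}(\lambda)=(1-\lambda)^{2k}\sum_j\binom kj\binom{k+\alpha}{k-j}x^j,\qquad x=\frac{\lambda^2}{(1-\lambda)^2},
\]
and use $\binom{k+\alpha+1}{k-j}=\frac{k+\alpha+1}{\alpha+1+j}\binom{k+\alpha}{k-j}$. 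Then $\lambda^{\alpha+1}F_{k,\alpha+1}-\lambda^{\alpha}F_{k,\alpha}$ becomes
\[
\lambda^\alpha(1-\lambda)^{2k}\sum_j\binom kj\binom{k+\alpha}{k-j}x^j\Big(\lambda\tfrac{k+\alpha+1}{\alpha+1+j}-1\Big).
\]
The terms with small $j$ have the wrong sign when $\lambda$ is close to $1$. I would repair this by regrouping consecutive $j$'s, using $x=\lambda^2/(1-\lambda)^2$ to absorb the positive terms into the negative ones, for example via an Abel summation in $j$.

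For part (ii), I would recognise the inner sum as a Jacobi polynomial:
\[
F_{k,\alpha}(\lambda)=(1-2\lambda)^kP_k^{(\alpha,0)}\!\Big(\frac{1-2\lambda+2\lambda^2}{1-2\lambda}\Big).
\]
This is to be read as a polynomial identity, so the singularity at $\lambda=\frac12$ is harmless. The three-term recurrence for $P_k^{(\alpha,0)}$ then gives a recurrence
\[
F_{k+1,\alpha}=A_k(\lambda)F_{k,\alpha}-B_k(\lambda)F_{k-1,\alpha}.
\]
I would prove $F_{k+1,\alpha}<F_{k,\alpha}$ by induction on $k$, with the ratio $R_k=F_{k+1,\alpha}/F_{k,\alpha}$ as the inductive object; the claim is $R_k<1$. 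The base case $k=0$ is the explicit inequality
\[
F_{1,\alpha}(\lambda)=\lambda^2+(1+\alpha)(1-\lambda)^2<1 .
\]
Dividing by $1-\lambda>0$ reduces it to a linear inequality in $\lambda$. For $\alpha>0$ this is exactly $\lambda>\frac{\alpha}{\alpha+2}$, with equality at $\lambda=\frac{\alpha}{\alpha+2}$. For $-\frac12\le\alpha<0$ the stated lower bound $\frac{\alpha+1}{\alpha+3}$ should come from the first induction step $k=1$ rather than from $k=0$. The inductive step uses the recurrence to show that $R_{k-1}\le1$ implies $R_k<1$, and this requires lower bounds on $A_k-1$ relative to $B_k$ that hold uniformly in $k$. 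At the borderline $\lambda=\frac{\alpha}{\alpha+2}$ we have $R_0=1$, so the $k=1$ step must itself give $R_1<1$ strictly; I would check this by direct computation with $F_{2,\alpha}$.

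I expect the main obstacle to be sign control. In (i), the naive term-by-term comparison fails, so the regrouping or Laplace-transform argument must really use the structure of the sum. In (ii), the induction needs uniform-in-$k$ inequalities for the recurrence coefficients on exactly the stated $\lambda$-ranges. I would first verify the low cases $k\le3$ symbolically, as in Appendix~\ref{app_code}, to pin down which combination of the coefficients actually controls the sign.
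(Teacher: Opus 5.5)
\paragraph{The gap is in part (i).} Your Laplace-transform formulation is an equivalent restatement of (i), not a proof. The contiguous-relation computation is never carried out, so everything rests on the fallback. There the decisive step (regroup consecutive $j$'s, e.g.\ by Abel summation) is left unspecified.

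Your reduction itself is correct. Put $s=\frac{\lambda}{1-\lambda}$ (so $x=s^2$) and $n=k+\alpha$, and use $\binom{n}{k-j}\frac{k-j}{\alpha+1+j}=\binom{n}{k-j-1}$. Multiplying your sum by $1+s$ shows that (i) is equivalent to
\[
\sum_{j=0}^{k-1}\binom kj\binom n{k-j-1}s^{2j+1}<\sum_{j=0}^{k}\binom kj\binom n{k-j}s^{2j}\qquad(s>0).
\]
Odd powers of $s$ must be beaten by even powers uniformly in $s$; already for $k=1$ this reads $s<s^2+n$. So you need a device that compares each odd-degree term with its even-degree neighbours, and an unspecified regrouping does not supply one.

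The missing idea is AM--GM. Set $v_j^2:=\binom kj\binom n{k-j}s^{2j}$. Then $\binom kj\binom n{k-j-1}s^{2j+1}\le v_jv_{j+1}$ if and only if $\binom kj\binom n{k-j-1}\le\binom k{j+1}\binom n{k-j}$, i.e.\ $\frac{k-j}{\alpha+1+j}\le\frac{k-j}{j+1}$. This is exactly where $\alpha\ge0$ is used; your plan never says where that hypothesis enters. Summing gives $\sum_{j<k}v_jv_{j+1}\le\sum_j v_j^2-\frac12(v_0^2+v_k^2)<\sum_j v_j^2$. This is the paper's proof, written there in the variable $Y=s^{-2}$ with $\beta_r=\sqrt{(k-r+1)/(n-r+1)}\le1$.

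\paragraph{Part (ii) follows the paper's route.} You use the same Jacobi representation with $X=\frac{\lambda^2+(1-\lambda)^2}{1-2\lambda}$, the same three-term recurrence, and the same induction from $F_{k-2,\alpha}>F_{k-1,\alpha}$ using positivity of the coefficient of $F_{k-2,\alpha}$. At $\lambda=\frac{\alpha}{\alpha+2}$ you also plan the same direct check, which gives $F_{1,\alpha}-F_{2,\alpha}=8\alpha/(\alpha+2)^4$. Reading the recurrence as a polynomial identity legitimately covers $\lambda=\frac12$, which the paper treats separately via Vandermonde.

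What you still owe is the inequality driving the step. Note that you need an \emph{upper} bound $A_k-B_k\le1$, not a lower bound on $A_k-1$. The paper gets it from
\[
A(\alpha,k)X+B(\alpha,k)-C(\alpha,k)=1-(1-\lambda)\frac{(\alpha+2k)(\alpha^2+\alpha+2k-2)\lambda-\alpha^2(\alpha+2k-1)}{k(\alpha+2k-2)(\alpha+k)}.
\]
The numerator here is increasing in $\lambda$ and nonnegative at the left endpoint of the range.

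For $-\frac12\le\alpha<0$, do not expect the induction to produce the threshold $\frac{\alpha+1}{\alpha+3}$. The first step only needs that numerator to be nonnegative at $k=2$, which already holds on a larger range. The bound $\frac{\alpha+1}{\alpha+3}$ is simply the range needed later for $p_{1,1}$.
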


\begin{proof}
For part (i), write
\[
S(\alpha):=\lambda^\alpha F_{k,\alpha}(\lambda).
\]
Set $Y=(\frac{1-\lambda}{\lambda})^2$ and $n=k+\alpha$. Since
$\binom{k}{j}=\binom{k}{k-j}$,
\[
S(\alpha)=\lambda^{2k+\alpha}\sum_{r=0}^k\binom{n}{r}\binom{k}{r}Y^r.
\]
Thus $S(\alpha+1)<S(\alpha)$ can be equivalently rewritten as
\begin{equation}\label{eq_ToTest}
\sum_{r=1}^k\binom{n}{r-1}\binom{k}{r}Y^r
<
\sqrt Y\sum_{r=0}^k\binom{n}{r}\binom{k}{r}Y^r.
\end{equation}
Introduce the symbols
\[
u_r:=\sqrt{\binom{n}{r}\binom{k}{r}Y^r},
\qquad
\beta_r:=
\sqrt{
\frac{\binom{n}{r-1}\binom{k}{r}}
     {\binom{n}{r}\binom{k}{r-1}}
}
=
\sqrt{\frac{k-r+1}{n-r+1}}.
\]
Since $n\geq k$, one has $\beta_r\leq 1$. Therefore
\[
\sum_{r=1}^k\binom{n}{r-1}\binom{k}{r}Y^r
=
\sqrt Y\sum_{r=1}^k u_{r-1}u_r\beta_r
\leq
\sqrt Y\sum_{r=1}^k u_{r-1}u_r
<
\sqrt Y\sum_{r=0}^k u_r^2,
\]
which proves \eqref{eq_ToTest} and thus part (i).

For part (ii), we first dispose of the case $\lambda=\frac12$.
By Vandermonde's identity,
\[
F_{k,\alpha}\left(\tfrac12\right)
=4^{-k}\binom{2k+\alpha}{k},
\]
and therefore
\[
\frac{F_{k+1,\alpha}(\frac12)}{F_{k,\alpha}(\frac12)}
=
\frac{(2k+\alpha+2)(2k+\alpha+1)}
     {4(k+1)(k+\alpha+1)}.
\]
The denominator minus the numerator equals
\[
(2-\alpha)(\alpha+1)+2k.
\]
If $\lambda=\frac12$ lies in the open range of part~(ii), then
$\alpha<2$, and this quantity is strictly positive. If instead
$\lambda=\frac{\alpha}{\alpha+2}=\frac12$, then $\alpha=2$, and the
same quantity equals $2k$, giving equality for $k=0$ and strict
inequality for every $k\geq1$. Thus all the required conclusions hold
when $\lambda=\frac12$, and we may henceforth assume
$\lambda\neq\frac12$.
Under this assumption, set $X=\frac{\la^2+(1-\la)^2}{1-2\la}$, and note that $\frac{X+1}{2}=\frac{(1-\la)^2}{1-2\la}$ and $\frac{X-1}{2}=\frac{\la^2}{1-2\la}$. Consequently,
$$
F_{k,\al}(\la) = (1-2\la)^k \sum_{j=0}^k \binom k j \binom {k+\al} {k-j}\left(\frac{X-1}{2}\right)^j \left(\frac{X+1}{2}\right)^{k-j} = (1-2\la)^k P_k^{\al,0}(X),
$$
where $P_k^{\al,0}(X)$ is the Jacobi polynomial. Therefore the three-term recurrence formula 
\[
F_{k,\al}(\la) = (A(\al,k)X+B(\al,k) )F_{k-1,\al}(\la) - C(\al,k)F_{k-2,\al}(\la)
\]
holds, see~\cite[\S18.9]{NIST:DLMF},
where the coefficients are given by
\begin{align*}
    A(\al,k) & :=\frac{(1-2\la)(2k+\al)(2k+\al-1)}{2k(k+\al)}, \\
B(\al,k)& :=\frac{(1-2\la)\al^2(2k+\al-1)}{2k(k+\al)(2k+\al-2)}, \\
C(\al,k)& :=\frac{(1-2\la)^2(2k+\al)(k-1)(k+\al-1)}{k(k+\al)(2k+\al-2)}.
\end{align*}
Observe  that $C(\al,k)>0$ for $k\geq 2$ and $\al\geq -1/2$. Assuming  $k\geq 2$ and $F_{k-2,\al}(\la) > F_{k-1,\al}(\la)$, we then see that 
$F_{k,\al}(\la)  < (A(\al,k)X+B(\al,k)-C(\al,k))F_{k-1,\al}(\la)$. We now claim that $A(\al,k)X+B(\al,k)-C(\al,k)\leq 1$, which will lead to the proof of part (ii) via induction. A direct computation reveals that
\[
A(\al,k)X+B(\al,k)-C(\al,k) = 1
-
(1-\lambda)\frac{
(\al + 2k)(\al^2 + \al + 2k - 2)\la -\al^2(\al+2k-1)
}{
k\,(\alpha+2k-2)\,(\alpha+k)
}.
\]
Hence, to prove our claim it will suffice to show that 
\[
(\al + 2k)(\al^2 + \al + 2k - 2)\la -\al^2(\al+2k-1) \geq 0.
\]
This function is increasing in $\la$ because $\al^2 + \al + 2k - 2\geq \al^2+\al+2\geq \frac74$ for $\al \geq -\frac12$. If $\al\geq 0$, then we can simply set $\la=\frac{\al}{\al+2}$, and the calculation yields $\frac{4k(k - 1)\al}{\al + 2}\geq 0$. If $-\frac12\leq\al<0$, then we set $\la = \frac{\al+1}{\al+3}$ to obtain
$(2\al^2 + (4k^2 - 2)(\al+1/2) + (2k^2+1 - 4k))/(\al + 3)$. In turn, this is an increasing function of $k \geq 2$, so we can set $k=2$ to obtain $(2\al^2 + 14\al + 8)/(\al + 3)>0$, which establishes  the claim. 

Away from the positive-$\alpha$ endpoint considered below, the induction
starts from
\[
F_{0,\alpha}(\lambda)-F_{1,\alpha}(\lambda)
=(1-\lambda)((\alpha+2)\lambda-\alpha)>0,
\]
and the result follows.
It remains to consider the endpoint
$\lambda=\frac{\alpha}{\alpha+2}$, with $\alpha>0$ and
$\alpha\neq2$. Here
\[
F_{0,\alpha}(\lambda)=F_{1,\alpha}(\lambda)=1,
\qquad
F_{1,\alpha}(\lambda)-F_{2,\alpha}(\lambda)
=\frac{8\alpha}{(\alpha+2)^4}>0.
\]
Moreover, the computation above gives, for $k\geq2$,
\[
(\alpha+2k)(\alpha^2+\alpha+2k-2)\lambda
-\alpha^2(\alpha+2k-1)
=\frac{4k(k-1)\alpha}{\alpha+2}>0.
\]
The same induction therefore yields
$F_{k+1,\alpha}(\lambda)<F_{k,\alpha}(\lambda)$ for every $k\geq1$.
\end{proof}

Recall that $\la\in [\frac{\nu}{\nu+1},1)$, corresponding to the endpoint condition on $r$. Now that the  monotonicity prerequisites have been settled, we observe that $p_{k,m}(\la) = \la^{-1-\nu} \la^{\nu+m}F_{k,\nu+m}(\la)$ if $m\geq 1$. For $d\geq 2$, i.e., $\nu\geq 0$, we can directly apply Lemma \ref{lem_master}   (i) to obtain 
$$
\inf_{k+m\geq 2} (1-p_{k,m}(\la)) = \min\bigg(  1-p_{1,1}(\la), 1-p_{0,2}(\la) ,\inf_{k\geq 2}( 1-p_{k,0}(\la)) , \inf_{k\geq 2}(1-p_{k,1}(\la)) \bigg),
$$
for all $\la\in (0,1)$. For $d=1$, i.e., $\nu=-\frac12$,  only  two harmonics $m\in \{0,1\}$ exist, and  instead  
$$
\inf_{k+m\geq 2} (1-p_{k,m}(\la)) = \min\bigg( 1-p_{1,1}(\la), \inf_{k\geq 2}( 1-p_{k,0}(\la)) , \inf_{k\geq 2}(1-p_{k,1}(\la))\bigg).
$$
The term $1-p_{1,1}(\lambda)$  simplifies to
$$
\frac{1-p_{1,1}(\la)}{1-\la}= (\nu + 3)\la - (\nu + 1),
$$
hence $\text{sign}(1-p_{1,1}(\la)) = \text{sign}(\la-\tfrac{\nu+1}{\nu+3})$.  We can thus restrict the analysis to the range $\frac{\nu+1}{\nu+3}\leq  \la <1$. Since $p_{k,1}(\la)=F_{k,\nu+1}(\la)$, we can apply  Lemma \ref{lem_master} (ii) for $\max\left(\frac{\nu}{\nu+1},\frac{\nu+1}{\nu+3}\right)\leq  \la <1$ to conclude that for $d\geq 2$ we have
$$
\inf_{k+m\geq 2} (1-p_{k,m}(\la)) = \min\bigg( 1-p_{1,1}(\la), 1-p_{0,2}(\la) ,  \inf_{k\geq 2}( 1-p_{k,0}(\la)  )\bigg),
$$
while for $d=1$ we have
$$
\inf_{k+m\geq 2} (1-p_{k,m}(\la)) = \min\bigg( 1-p_{1,1}(\la),\inf_{k\geq 2}( 1-p_{k,0}(\la)  )\bigg).
$$
Further note that, for $k \geq 2$,
$$
p_{k,0}(\la)=\la^{-1}\bigg(F_{k,\nu}(\la) + (\la\tfrac{\nu+2}{\nu+1}-1 )(1-\la)^{2k-1}L_k^\nu(0)\bigg).
$$
Observing also that 
$$
\frac{(1-\la)^{2k+1}L_{k+1}^\nu(0)}{(1-\la)^{2k-1}{L_{k}^\nu(0)} } =(1-\la)^2 \frac{\nu+k+1}{k+1} <1
$$
in the range $\max\left(\frac{\nu}{\nu+1},\frac{\nu+1}{\nu+3}\right)\leq  \la <1$, we conclude that $p_{k,0}(\la)$ is a decreasing sequence for $k\geq 2$  when $\frac{\nu+1}{\nu+2}< \la<1$. Thus  $$\inf_{k\geq 2}( 1-p_{k,0}(\la) ) = 1-p_{2,0}(\la)$$ in this range. We next claim that, for every $d\geq 1$ and $\max\left(\frac{\nu}{\nu+1},\frac{\nu+1}{\nu+3}\right)\leq  \la   \leq  \frac{\nu+1}{\nu+2}$, it holds 
\begin{equation}\label{eq_claim}
\sup_{k\geq 4}p_{k,0}(\la)  < p_{1,1}(\la)  \quad \text{and} \quad p_{3,0}(\la) < p_{2,0}(\la) .
\end{equation}
We establish this claim at the end of the proof. We also have
$$
\frac{p_{1,1}(\la) - p_{2,0}(\la)}{{1-\la}} =
\begin{cases}
\frac{13 \la^2}{4}-3 \la+\frac{3}{4}, & d=1,\\
(2\la-1)^2, & d=2.
\end{cases}
$$
The second  polynomial is visibly nonnegative everywhere, while the first one also has that property since  its discriminant equals $-\frac34$.  These facts together imply, in the range $\max\left(\frac{\nu}{\nu+1},\frac{\nu+1}{\nu+3}\right)\leq  \la <1$, 
$$
(d\geq 3) \qquad \inf_{k+m\geq 2} (1-p_{k,m}(\la)) = \min\bigg( 1-p_{1,1}(\la), 1-p_{0,2}(\la) ,   1-p_{2,0}(\la)  \bigg),
$$
$$
(d=2) \qquad \inf_{k+m\geq 2} (1-p_{k,m}(\la)) = \min\bigg( 1-p_{1,1}(\la),  1-p_{0,2}(\la)  \bigg),
$$
and 
$$
(d=1) \qquad \inf_{k+m\geq 2} (1-p_{k,m}(\la)) = 1-p_{1,1}(\la).
$$

Since $p_{0,2}(\la)=\la <1$, it does not interfere with any sign analysis of these minima. Let $d\in\{1,2\}$ (i.e., $\nu\in \{-\frac12,0\}$). Since $\max\left(\frac{\nu}{\nu+1},0\right)\leq 0 < \frac{\nu+1}{\nu+3}$ and $\text{sign}(1-p_{1,1}(\la)) = \text{sign}(\la-\tfrac{\nu+1}{\nu+3})$, we conclude that gaussians are not local maximizers for $\rho_d<r<\infty$ with $\rho_1=10$ and $\rho_2=6$, but are stable local maximizers for $2<r<\rho_d$. Now let $d\in \{3,4,5\}$ (i.e., $\nu\in\{\frac12,1,\frac32\}$). Since
$
\frac{1-p_{2,0}(\la)}{1-\la} = \left(\frac{3}{2}\nu
 + 4\right) \la^2
 - \left(\nu
 + 1\right) \la
 - \frac{1}{2}\nu,
$
we obtain 
\begin{align}\label{eq_factP20}
\frac{1-p_{2,0}(\la)}{1-\la} = \begin{cases}  \frac{19^2}{76} \left(\la-\frac{3+2 \sqrt{7}}{19}\right) \left(\la-\frac{3-2 \sqrt{7}}{19}\right),  & d=3, \\ 
 \frac{11^2}{22} \left(\la-\frac{2+\sqrt{15}}{11}\right) \left(\la-\frac{2-\sqrt{15}}{11}\right), & d=4, \\ 
\frac{25}{4} \left(\la-\frac{3}{5}\right) \left(\la+\frac{1}{5}\right), & d=5.
\end{cases}
\end{align} 
For $d=3,4$, we respectively have
$$
\frac{3+2 \sqrt{7}}{19} \approx 0.436 > \max\left(\frac{\nu}{\nu+1},\frac{\nu+1}{\nu+3}\right) = \frac{3}{7} \approx 0.429
$$
and
$$
\frac{2+\sqrt{15}}{11}  \approx 0.533 > \max\left(\frac{\nu}{\nu+1},\frac{\nu+1}{\nu+3}\right) = \frac{1}{2}.
$$
This shows that gaussians are not local maximizers for $\rho_d<r \leq \frac{2d}{d-2}$ for $d\in\{3,4\}$ with $\rho_3=4\sqrt{7}-6$ and $\rho_4 = 2\sqrt{15}-4$, but that gaussians are stable local maximizers for $2<r<\rho_d$. For $d=5$, we have $\frac{3}{5}= \frac{\nu}{\nu+1} > \frac{\nu+1}{\nu+3}$, hence the analysis is inconclusive at $r=\rho_5=\frac{10}{3}$, but gaussians are still stable local maximizers for $2<r<\rho_5$.
Finally, suppose that $d\geq6$, so that $\nu\geq2$. Since
$    \frac{\nu}{\nu+1}>\frac{\nu+1}{\nu+3}$,
we have $1-p_{1,1}(\lambda)>0$ throughout the admissible range.
Moreover, setting
\[
    Q_\nu(\lambda):=
    \frac{1-p_{2,0}(\lambda)}{1-\lambda},
\]
we have
\[
    Q_\nu\left(\frac{\nu}{\nu+1}\right)
    =\frac{\nu(2\nu-3)}{2(\nu+1)^2}>0,
\]
and $Q_\nu$ is increasing for
$\lambda\geq\nu/(\nu+1)$. Hence all three quantities in the
minimum above are strictly positive throughout the admissible range.

It remains to establish the claim made in \eqref{eq_claim}.
For $k\geq 4$, we can use Lemma \ref{lem_master}  (ii) to obtain
$$
p_{k,0}(\la)=\la^{-1}\bigg(F_{k,\nu}(\la) + (\la\tfrac{\nu+2}{\nu+1}-1 )(1-\la)^{2k-1}L_k^\nu(0)\bigg) \leq \la^{-1}F_{4,\nu}(\la).
$$
It then suffices  to show that,  for every $d\geq 1$ and $\max\left(\frac{\nu}{\nu+1},\frac{\nu+1}{\nu+3}\right)\leq  \la  <\infty$, we have
$$
\la^{-1}F_{4,\nu}(\la) < p_{1,1}(\la)  \quad \text{and} \quad p_{3,0}(\la) < p_{2,0}(\la) .
$$
Define the polynomials
$$
g_1(\la):=\frac{\la p_{1,1}(\la)-F_{4,\nu}(\la)}{1-\la} \quad \text{and} \quad g_2(\la):=\frac{p_{2,0}(\la)-p_{3,0}(\la)}{1-\la}.
$$
For $g_1$ when $1\leq d\leq 4$, and for $g_2$ when $1\leq d\leq 8$, Sturm's method with exact rational arithmetic reveals that these polynomials have no zeros in the range $\max\left(\frac{\nu}{\nu+1},\frac{\nu+1}{\nu+3}\right)\leq\la<\infty$. Since $g_1(1)=5$ and $g_2(1)=2$, they are strictly positive in this range. In the remaining cases, namely, $5\leq d\leq 9$ for $g_1$ and $d=9$ for $g_2$, translating the left endpoint to zero yields polynomials with positive coefficients. The desired inequalities follow. For $d\geq 10$, that is, $\nu\geq 4$, we let $y=\la-\frac{\nu}{\nu+1}$
and write $g_1(\la)$ and $g_2(\la)$ as polynomials in the new variable $y\geq 0$. 
Write the coefficients of $g_1$ and $g_2$ as rational functions of $\nu$, i.e., $g_1=\sum_{j=0}^7 \frac{A_j(\nu)}{B_j(\nu)} y^j$ and $g_2=\sum_{k=0}^4 \frac{C_k(\nu)}{D_k(\nu)} y^k$. The denominators $B_j$ and $D_k$ are polynomials with nonnegative coefficients. It is a routine verification (e.g., with elementary computer assistance) that $A_j(\mu + 4)$ and $C_j(\mu+4)$ are polynomials in $\mu$ with nonnegative coefficients. Since $\nu=\mu+4\ge 4$ if and only if $\mu\ge 0$, this proves the desired claim.  
We note that the  claim is not true for $\nu\geq 3$, since the coefficient of $y^2$ in $g_2(\la)$ has a simple root at $\nu\approx 3.108$.

\begin{remark}\label{rem_dimKer}
    
At the threshold $r=\rho_d$, the preceding analysis also identifies the
kernel of the second variation on $(T_{g_0}\Gg)^\perp$. Indeed,
by \eqref{eq_orthogonal_decomp} and Proposition \ref{prop_modes}, the eigenvalues are, up to a strictly
positive multiplicative constant,
\[
    1-p_{k,m}\!\left(\frac{2}{\rho_d}\right),
    \qquad k+m\geq 2.
\]
For $d\in\{1,2\}$, the only vanishing eigenvalue is the one corresponding
to $(k,m)=(1,1)$, since
\[
    \frac{1-p_{1,1}(\lambda)}{1-\lambda}
    =(\nu+3)\lambda-(\nu+1)
\]
vanishes at $\lambda=2/\rho_d$. Hence the kernel is
$\operatorname{span}_{\mathbb C}\{f_{1,1}\}$ when $d=1$, and the full
degree-one spherical-harmonic eigenspace when $d=2$, which has complex
dimension $N(2,1)=2$. For $d\in\{3,4,5\}$, the only vanishing eigenvalue
is the one corresponding to $(k,m)=(2,0)$, as follows from the
factorizations of $(1-p_{2,0}(\lambda))/(1-\lambda)$ in \eqref{eq_factP20} together with
the strict comparisons in \eqref{eq_claim}. Since this mode is radial, its
multiplicity is $N(d,0)=1$. Therefore the second variation is positive
semidefinite on $(T_{g_0}\Gg)^\perp$, and its kernel has complex
dimension $1$, except when $d=2$, in which case it has complex dimension
$2$.

\end{remark}

\subsection{\texorpdfstring{{The case $d=1$, $r=\infty$}}{The d=1, r=infinity case}}\label{sec_d1ri}

When $d=1$, the pair $(q,r)=(4,\infty)$ is Schrödinger-admissible, and, as stated in Theorem~\ref{thm_main}~(b), gaussians are not local maximizers. However, the proof requires an {\it ad hoc} argument because Lemma~\ref{lem_second_variation} does not apply when $r=\infty$, and neither does  the method of the previous section. We state and prove such an argument as an independent proposition. Recall the definition~\eqref{eq:hermite_functions} of the Hermite functions $\{h_n\}$. In particular, the standard one-dimensional gaussian satisfies  $g_0(x)=e^{-x^2/2}=h_0(x)$. We have $\lVert h_n\rVert_{L^2(\R)}^4=\pi$, and so $\tilde h_n(x):=\pi^{-1/4}h_n(x)$ denote the $L^2$-normalized Hermite functions.
\begin{proposition}\label{prop:infty_case}
    Given  $n\in 2\mathbb Z_{\geq 0}+1$, let $g_\varepsilon:=\frac{\tilde{h}_0+\varepsilon\tilde{h}_n}{\sqrt{1+\varepsilon^2}}$, so that $\lVert g_\varepsilon\rVert_{L^2(\R)}=1$ for all $\varepsilon\in\mathbb R$. Then
    \begin{equation}\label{eq:taylor_exp_infty}
        \mathfrak{d}_{4,\infty}[g_\varepsilon]=-\frac12C_n\varepsilon^2+o(\varepsilon^2),
    \end{equation}
    with 
    \begin{equation}\label{eq:const_infty}
        C_n=\frac{8n^2}{2^nn!}\left(\frac{(n-1)!}{\left(\frac{n-1}{2}\right)!}\right)^2-4.
    \end{equation}
    Moreover, $C_n>0$ for all odd $n\ge 3$. Thus the gaussian $g_0$ is not a local maximizer.
\end{proposition}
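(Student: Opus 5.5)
The plan is to compute $\mathfrak d_{4,\infty}[g_\varepsilon]$ directly via the lens transform. The formulas of Lemma~\ref{lem_second_variation} are unavailable here, but Propositions~\ref{prop_lensnorms} and~\ref{prop:symmetry_trick} hold verbatim for $r=\infty$ with $q=4$, $d=1$. They give
\[
\|e^{-it\Delta/2}f\|_{L^4_tL^\infty_x}^4=\frac12\int_{-\pi}^{\pi}\|U(T,\cdot)\|_{L^\infty}^4\,\d T,\qquad U=e^{-iT(\Hc+1)/2}f.
\]
For $f=g_\varepsilon$ we have $U(T,X)=\pi^{-1/4}(1+\varepsilon^2)^{-1/2}\bigl(h_0(X)+\varepsilon e^{inT}h_n(X)\bigr)$. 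For $f=\tilde h_0$ this gives $\frac12\cdot 2\pi\cdot\pi^{-1}=1$. Since $\|g_\varepsilon\|_{L^2}=1$, it follows that
\[
\mathfrak d_{4,\infty}[g_\varepsilon]=1-\frac{1}{2\pi(1+\varepsilon^2)^2}\int_{-\pi}^{\pi}M_\varepsilon(T)^2\,\d T,\qquad M_\varepsilon(T):=\sup_X|h_0(X)+\varepsilon e^{inT}h_n(X)|^2.
\]

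The key step is a uniform second-order expansion of $M_\varepsilon(T)$. Write $w=e^{inT}$. Since $h_n$ is real and odd ($n$ odd),
\[
|h_0+\varepsilon w h_n|^2=e^{-X^2}+2\varepsilon\cos(nT)\,e^{-X^2/2}h_n(X)+\varepsilon^2h_n(X)^2 .
\]
The maximum of $e^{-X^2}$ is strict and nondegenerate at $X=0$, and $h_n$, $h_n'$, $h_n''$ are bounded. Hence for small $\varepsilon$ the supremum is attained in a region $|X|\lesssim\varepsilon$, uniformly in $T$: outside any fixed neighborhood of $0$ the function is at most $1-c+O(\varepsilon)$.

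Near $0$ we Taylor expand, using $h_n(0)=0$, to get
\[
1-X^2+2\varepsilon a X+O(X^3+\varepsilon X^2+\varepsilon^2X^2),\qquad a=\cos(nT)\,h_n'(0).
\]
Maximizing in $X$ yields $M_\varepsilon(T)=1+\varepsilon^2\cos^2(nT)\,h_n'(0)^2+O(\varepsilon^3)$, uniformly in $T$. Making this uniformity rigorous, with a localization argument plus a one-variable maximization with controlled remainder, is the main technical point. Everything else is explicit.

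Inserting the expansion and using $\frac1{2\pi}\int_{-\pi}^{\pi}\cos^2(nT)\,\d T=\frac12$ and $(1+\varepsilon^2)^{-2}=1-2\varepsilon^2+O(\varepsilon^4)$ gives
\[
\mathfrak d_{4,\infty}[g_\varepsilon]=-\varepsilon^2\bigl(h_n'(0)^2-2\bigr)+o(\varepsilon^2).
\]
This is \eqref{eq:taylor_exp_infty} with $C_n=2h_n'(0)^2-4$.

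To match \eqref{eq:const_infty}, use the ladder identity $h_n'=\sqrt{2n}\,h_{n-1}-Xh_n$, which gives $h_n'(0)=\sqrt{2n}\,h_{n-1}(0)$. With $m=n-1$ even, $H_m(0)=(-1)^{m/2}m!/(m/2)!$ and $h_m(0)=H_m(0)/\sqrt{2^m m!}$. Substituting, $2h_n'(0)^2$ equals the first term in \eqref{eq:const_infty}.

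For positivity, $n=3$ gives $C_3=6-4=2>0$. Writing $a_n:=h_n'(0)^2$, a direct computation gives $a_{n+2}/a_n=\frac{n+2}{n+1}>1$. So $a_n$ increases along odd $n$, and $C_n\ge C_3>0$ for all odd $n\ge3$.

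Finally, the tangent space $T_{g_0}\Gg$ in $d=1$ is spanned by $h_0,h_1,h_2$. For odd $n\ge3$, the perturbation $\tilde h_n$ is therefore orthogonal to it, and $\operatorname{dist}(g_\varepsilon,\Gg)\le\|g_\varepsilon-\tilde h_0\|_{L^2}\to0$. Since $\mathfrak d_{4,\infty}$ and $\delta_{4,\infty}$ share sign, $\delta_{4,\infty}[g_\varepsilon]<0$ for small $\varepsilon\neq0$. Thus gaussians are not local maximizers.
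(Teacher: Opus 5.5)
Your proposal is correct and follows the paper's proof: the lens transform and the symmetry trick reduce the problem to $\int_{-\pi}^{\pi}\sup_X|h_0+\varepsilon e^{inT}h_n|^4\,\d T$. You then localize the supremum near $X=0$, expand to second order to get $C_n=2h_n'(0)^2-4$, evaluate $h_n'(0)$ with Hermite identities, and prove positivity through the ratio $\frac{n+2}{n+1}>1$. The differences are cosmetic. You bound the maximum directly with an $O(\varepsilon^3)$ remainder instead of using the implicit function theorem, you use the ladder identity in place of $H_n'=2nH_{n-1}$, and you spell out the final step that $\mathfrak{d}$ and $\delta$ have the same sign.
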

\begin{proof}
    Observing $\lVert g_0\rVert_{L^\infty}=1$, by~\eqref{eq:deficit_post_lens}  we have
    \begin{equation}\label{eq:deficit_infty_after_lens}
        \begin{split}
            2\mathfrak{d}_{4,\infty}[g_\varepsilon]&=\frac{\lVert e^{-iT(\Hc+d)/2}g_0 \rVert_{L^4_TL^\infty_{{X}}((-\pi, \pi)\times \mathbb R)}^4}{\lVert g_0\rVert_{L^2}^4} - \lVert e^{-iT(\Hc+d)/2} g_\varepsilon \rVert_{L^4_TL^\infty_{{X}}((-\pi, \pi)\times \mathbb R)}^4 \\
            &=2- {\int_{-\pi}^\pi \sup_{X\in\R}\left\lvert \frac{\tilde{h}_0(X)+\varepsilon e^{iTn}\tilde{h}_n(X)}{\sqrt{1+\varepsilon^2}}\right\rvert^4\, \d T}=:2-F(\varepsilon).
        \end{split}
    \end{equation}
    Below we show that $F$ is smooth in a neighborhood of $\varepsilon=0$ and expand it to second order. Note that $F(-\varepsilon)=F(\varepsilon)$, as can be established via the change of variables $T\mapsto T-\frac{\pi}n$, and so $F(\varepsilon)=2+O(\varepsilon^2)$.\footnote{This argument can  be used to produce a simpler proof that $g_0$ is a critical point for all Schrödinger-admissible pairs of exponents, as originally established by Christ--Quilodrán \cite{CQ14}.} 
    
   For the purpose of computing the second order expansion, let
\begin{equation*}
    G_{\varepsilon,T}(X)
    :=
    \left\lvert
        \tilde h_0(X)+\varepsilon e^{inT}\tilde h_n(X)
    \right\rvert^2.
\end{equation*}
Clearly $G_{\varepsilon,T}\to\tilde h_0^2$ uniformly in $X$ and $T$, as $\varepsilon\to 0^+,$ and
$\tilde h_0^2(X)=\pi^{-1/2}e^{-X^2}$. 
In particular, the limit has a unique global maximum at $X=0$ with strictly negative second derivative. Uniform convergence and the strict global maximum of $\tilde h_0^2$ imply
that every global maximizer converges to $0$, as $\varepsilon\to0^+$, uniformly
in $T$. On the other hand, by the strict negativity of that second derivative, the implicit function theorem applied to the critical point
equation
\begin{equation*}
    \partial_XG_{\varepsilon,T}(X)=0
\end{equation*}
reveals that, for sufficiently small $\varepsilon$, there exists a unique
critical point near $0$, which is therefore the unique global maximizer.
This implicit function theorem argument also proves that $F$ depends smoothly on $\varepsilon$, as claimed above.    

By the preceding observations, we can compute the supremum in the definition of $F(\varepsilon)$ via a Taylor expansion around $X=0$. To do this, observe that
    \begin{equation}\label{eq:local_def_g}
        \tilde{h}_0(X)+\varepsilon e^{inT}\tilde{h}_n(X)=\pi^{-\frac14}\left(1-\frac{X^2}{2}+O(X^4)\right)+\varepsilon e^{inT}\left( \tilde{h}_n'(0)X+O(X^3)\right),
    \end{equation}
    so we can compute 
    \begin{equation}\label{eq:expand_gsquare}
        G_{\varepsilon, T}(X)=\pi^{-\frac12}(1-X^2)+2\pi^{-\frac14}\cos(nT)\tilde{h}_n'(0)\varepsilon X + O(\varepsilon^2X^2)+O(X^3),
    \end{equation}
    which attains its maximum at $X_\star=\pi^{1/4}\cos(nT)\tilde{h}_n'(0)\varepsilon+O(\varepsilon^2)$. We conclude that  
    \begin{equation}\label{eq:compute_sup_second order}
        \sup_{X\in\mathbb R}G_{\varepsilon, T}(X)^2 = \pi^{-1} + 2\pi^{-\frac12}\cos^2(nT)(\tilde{h}_n'(0))^2\varepsilon^2 + O(\varepsilon^3).
    \end{equation}
   Recalling the definition of $\tilde{h}_n$, we have that $\tilde{h}_n'(0)=\pi^{-1/4}(2^n n!)^{-1/2}H'_n(0)$ where $H_n$ is the Hermite polynomial of degree $n$. The known formula $H'_n(x)=2nH_{n-1}(x)$ and the expansion $$H_{n-1}(x)=(n-1)! \sum_{k=0}^{\lfloor \frac{n-1}2\rfloor} \frac{(-1)^k(2x)^{n-1-2k}}{k!(n-1-2k)!},$$ see ~\cite[(18.9.25),(18.5.13)]{NIST:DLMF}, together imply\footnote{Recall that $n$ is odd, hence $\frac{n-1}2$ is an integer.}
    \begin{equation}\label{eq:derivative_normalized_hermite}
        \tilde{h}_n'(0)=(-1)^\frac{n-1}{2}\frac{2n}{\pi^\frac14\sqrt{2^n n!}}\frac{(n-1)!}{\left(\frac{n-1}{2}\right)!}.
    \end{equation}
    Multiplying by $(1+\varepsilon^2)^{-2}=1-2\varepsilon^2+O(\varepsilon^4)$ and integrating in $T$, we conclude that
    \begin{equation}\label{eq:expansion_integral}
        F(\varepsilon)=2+C_n \varepsilon^2+ O(\varepsilon^3),
    \end{equation}
    as claimed. To finish the proof, it remains to show that $C_n>0$ for $n\ge 3$ odd. This is readily seen by writing $n=2m+1$ with $m\in\mathbb N$ and noting that
    \[C_{2m+1}=4\frac{2m+1}{4^m}\binom{2m}{m}-4=:4a_m-4.\]
    Now, $\frac{a_{m+1}}{a_m}=\frac{2m+3}{2m+2}>1$ and $a_1=\frac 32>1$, so $a_m>1$ for every $m\in\mathbb N$. The conclusion follows.
\end{proof}

\section{Effective stability}\label{sec:stability}

 Our proof of Theorem~\ref{thm:effective_stability} will follow a general blueprint which we state and prove in abstract terms, in the vein of~\cite{GN22}. The resulting abstract Theorem~\ref{thm:abstract_stability} will then be applied to prove Theorem~\ref{thm:effective_stability} at the end of the section. 
 
 Let $\mathcal H$ be a complex\footnote{It is not difficult to formulate a version of the results of this section for real Hilbert spaces, in a way similar to~\cite[Remark~2.2]{GN22}.} Hilbert space, with norm $\|\cdot\|:=\|\cdot\|_{\mathcal H}$ induced by its inner product. Given exponents $q,r>3$, consider a linear operator $S\colon \mathcal H\to L_t^q L_{\boldsymbol{x}}^r(\mathbb R^{1+d})$ with operator norm $\lVert S\rVert$, and define the associated 2-homogeneous deficit functional
\begin{equation}\label{eq:general_deficit_two}
    D[f]:=\lVert S\rVert^2\lVert f\rVert^2 - \lVert Sf\rVert_{L^qL^r}^2.
\end{equation}
Note that $D$ is thrice real differentiable at every $f$ such that $Sf$ is nonzero, since $q,r>3$. For $f,g\in\mathcal H$, we denote the first three Fréchet derivatives at $g$ in the direction of $f$ by
\begin{equation}\label{eq:abstract_derivatives}
    \begin{array}{ccc}
         D'[g](f):=\left.\frac{\partial}{\partial\varepsilon}\right|_{\varepsilon=0}D[g+\varepsilon f],
         & D''[g](f,f):=\left.\frac{\partial^2}{\partial\varepsilon^2}\right|_{\varepsilon=0}D[g+\varepsilon f],
         & D'''[g](f,f, f):=\left.\frac{\partial^3}{\partial\varepsilon^3}\right|_{\varepsilon=0}D[g+\varepsilon f].
    \end{array}
\end{equation}
Let $M$ denote the set of all \emph{maximizers}, i.e., 
\begin{equation}\label{eq:maximisers_in_general}
    M=\{f\in\mathcal H\ :\ D[f]=0\},
\end{equation}
and suppose there exists $h_0\in M\setminus\{0\}$. We require the existence of  a Lie group of Hilbert space isometries 
 \begin{equation}\label{eq:generic_lie_group}
    G:=\{\Gamma_{\boldsymbol{\theta}}\colon \mathcal H \to \mathcal H\ :\ \boldsymbol{\theta}\in \mathbb R^k\},
\end{equation}
such that $\lVert S\Gamma_{\boldsymbol{\theta}}f\rVert_{L^qL^r}$ is independent of  $\boldsymbol{\theta}$. We further assume that $G$ vanishes at infinity, in the sense that $\langle\Gamma_{\boldsymbol{\theta}}f, g\rangle_{\mathcal H}\to 0$ as $\lvert\boldsymbol\theta\rvert\to\infty$. These assumptions ensure that the orbit
\begin{equation}\label{eq:orbit_hzero}
    \mathcal{O}:= (\mathbb C^\times G).h_0=\{c\Gamma_{\boldsymbol{\theta}}h_0\ :\ c\in \mathbb C\setminus\{0\},\ \boldsymbol{\theta}\in \mathbb R^k\}
\end{equation}
is a finite-dimensional smooth manifold that admits a (possibly non-unique) metric projection whenever $d(f,\mathcal O)<\lVert f\rVert$; see the proof of Lemma~\ref{lem:third_estimate} below. Finally, let $\overline{\mathcal{O}}:=\mathcal{O}\cup\{0\}$.

This setup is quite general and applies, in particular, to all known cases of  Sobolev, Strichartz and Fourier extension  inequalities whose best constants are known. The following theorem deduces an effective stability estimate under two extra assumptions, a global one (i) and a local one (ii). We remark that these assumptions are quite strong, and establishing them in specific cases of interest may amount to a formidable task.
\begin{theorem}\label{thm:abstract_stability}
    Suppose that the operator $S$ satisfies the following properties.
    \begin{enumerate}\label{enum:extra_assumptions}
        \item[(i)] \underline{Existence of a flow converging to the maximizers.} There exists a family $(\Phi_s)_{s\ge 0}$ of (possibly nonlinear) norm-preserving operators on $\mathcal H$ such that $\Phi_0$ is the identity and, for every $f\in\mathcal H$, the map $s\mapsto\Phi_s(f)$ is continuous, while $D[\Phi_s(f)]$ is strictly decreasing as long as $\Phi_s(f)\notin\overline{\mathcal O}$ and vanishes whenever $\Phi_s(f)\in\overline{\mathcal O}$. Moreover,
        \begin{equation}\label{eq:Phis_to_zero}
            \begin{array}{cc}
                \displaystyle\lim_{s\to\infty}D[\Phi_s(f)]=0,
                &
                \displaystyle\lim_{s\to\infty}d(\Phi_s(f),\mathcal O)=0.
            \end{array}
        \end{equation}
        \item[(ii)] \underline{Positive spectral gap.} There exists $\lambda>0$ such that $\frac12D''[h_0](f_\bot, f_\bot)\ge \lambda \lVert f_\bot\rVert^2$, for every $f_\bot\in\mathcal H$ which is orthogonal to the tangent space $T_{h_0}\mathcal O$.   
    \end{enumerate}
    Then $M=\overline{\mathcal O}$ and, letting $\eta$ denote the explicit \emph{local radius} of $D$ around $\mathcal O$ (see  Lemma~\ref{lem:third_estimate} below), we have that
    \begin{equation}\label{eq:explicit_stability}
        D[f]
        \ge
        \frac{\lambda\eta^2}{3}d(f,M)^2,
    \end{equation}
    for every $f\in\mathcal H$.
\end{theorem}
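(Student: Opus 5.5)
The plan is a Christ-type local-to-global argument: the flow carries any $f$ into a neighbourhood of $\mathcal O$, where the spectral gap gives a local stability estimate, and continuity of the flow lets us pick the moment of entry so that the local estimate applies.

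The first ingredient is the local estimate, which I expect Lemma~\ref{lem:third_estimate} to supply. It should say that there is an explicit radius $\eta\in(0,1)$ such that, whenever $d(f,\mathcal O)\le \eta\lVert f\rVert$, the metric projection exists and
\[
D[f]\ge \tfrac{\lambda}{2}\,d(f,\mathcal O)^2 .
\]
To prove it, I would write $f=c\Gamma_{\boldsymbol\theta}(h_0+f_\bot)$ with $f_\bot\perp T_{h_0}\mathcal O$, using the invariance of $D$ under $G$ and under scaling by $\mathbb C^\times$. Taylor expansion at $h_0$ then gives
\[
D[h_0+f_\bot]=\tfrac12D''[h_0](f_\bot,f_\bot)+R, \qquad D[h_0]=0,\quad D'[h_0]=0.
\]
The remainder satisfies $|R|\le \tfrac16\sup\lvert D'''\rvert\,\lVert f_\bot\rVert^3$, and the supremum of $D'''$ is controlled explicitly near $h_0$ because $q,r>3$. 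Assumption (ii) then yields $D\ge \lambda\lVert f_\bot\rVert^2-C\lVert f_\bot\rVert^3\ge \tfrac{\lambda}{2}\lVert f_\bot\rVert^2$ once $\lVert f_\bot\rVert\le\eta$, with $\eta$ explicit in terms of $\lambda$ and $C$.

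The global step uses assumption (i); take $\lVert f\rVert=1$ by homogeneity. If $d(f,\mathcal O)\le \eta$, the local estimate already gives the claim with constant $\lambda/2\ge \lambda\eta^2/3$. Otherwise, the map $s\mapsto d(\Phi_s(f),\mathcal O)$ is continuous, exceeds $\eta$ at $s=0$, and tends to $0$ as $s\to\infty$ by \eqref{eq:Phis_to_zero}. Hence there is a first time $s_*$ with $d(\Phi_{s_*}(f),\mathcal O)=\eta$. Since $D$ decreases along the flow and $\Phi$ preserves norms, the local estimate at $\Phi_{s_*}(f)$ gives
\[
D[f]\ge D[\Phi_{s_*}(f)]\ge \tfrac{\lambda}{2}\eta^2 .
\]
On the other hand, $d(f,M)\le\lVert f\rVert=1$, since $0\in\overline{\mathcal O}$. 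Therefore $D[f]\ge \tfrac{\lambda\eta^2}{2}d(f,M)^2\ge \tfrac{\lambda\eta^2}{3}d(f,M)^2$. In the original outline the constant $1/3$ presumably absorbs the slack between $d(f,\mathcal O)$ and the projection bounds; I would adjust the numerical constants in the local lemma so that both cases give at least $\tfrac{\lambda\eta^2}{3}$.

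To identify $M=\overline{\mathcal O}$, the inclusion $\overline{\mathcal O}\subset M$ is immediate from $h_0\in M$ and the invariances. Conversely, if $D[f]=0$ and $f\notin\overline{\mathcal O}$, then by (i) $D[\Phi_s(f)]$ would be strictly decreasing from $0$, hence negative, contradicting $D\ge0$. So $f\in\overline{\mathcal O}$, and then $d(f,M)=d(f,\overline{\mathcal O})$, which is the distance used above.

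The main obstacle is the local lemma with an explicit $\eta$. It requires existence of the metric projection, which needs compactness modulo the $\mathbb C^\times$ scaling and uses that $G$ vanishes at infinity, so that the infimum over $\boldsymbol\theta$ is attained. It also requires showing that the minimizing $f_\bot$ is genuinely orthogonal to the whole tangent space, via first-order conditions, and bounding $D'''$ uniformly on a ball around $h_0$ with explicit constants.
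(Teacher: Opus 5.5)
Your argument is essentially the paper's proof: the same identification $M=\overline{\mathcal O}$ via strict decrease of $D$ along the flow contradicting $D\ge 0$, the same case split at $d(f,M)=\eta\lVert f\rVert$, and the same intermediate-value choice of $s$ with $d(\Phi_s(f),M)=\eta\lVert\Phi_s(f)\rVert$, followed by the chain $D[f]/d(f,M)^2\ge D[f]/\lVert f\rVert^2\ge D[\Phi_s(f)]/\lVert\Phi_s(f)\rVert^2\ge\lambda\eta^2/3$. The only correction is cosmetic: Lemma~\ref{lem:third_estimate} gives the local constant $\lambda/3$ (not $\lambda/2$) for the stated $\eta$, which is exactly where the $1/3$ comes from, so your global step goes through verbatim without readjusting any constants.
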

The first conclusion of the theorem, namely that $M=\overline{\mathcal O}$, is  an immediate consequence of property~(i). The more interesting conclusion is that the constant $\lambda \eta^2/3$ in \eqref{eq:explicit_stability}, albeit possibly suboptimal, is explicit.

In order to prove Theorem~\ref{thm:abstract_stability}, we  first establish the following lemma, which is an improved version of~\cite[Theorem~2.1]{GN22}. The improvement lies in the fact that the local radius  $\eta$  is effective. Since this lemma is purely local, it does not rely on assumption (i) above.
\begin{lemma}[Effective Local Stability]\label{lem:third_estimate}
Let $K_{q,r}>0$ be such that, for every $g, h\in \mathcal H$,
\begin{equation}\label{eq:third_derivative_bound}
    \left|D'''[h](g,g,g)\right|
    \le
    K_{q,r}
    \frac{\lVert Sg\rVert_{L^qL^r}^3}
         {\lVert Sh\rVert_{L^qL^r}}
\end{equation}
whenever $Sh$ is nonzero, and define the \emph{local radius}
\begin{equation}\label{eq:eta_in_terms_of_third_der}
    \eta
    :=
    \min\left\{
        \frac{1}{\sqrt{5}},
        \frac{2\lambda}
        {\sqrt{K_{q,r}^2\lVert S\rVert^4+4\lambda^2}}
    \right\}.
\end{equation}
Then, for every $f\in\mathcal H$ such that $d(f,\mathcal{O})\le\eta\lVert f\rVert$, it holds 
\begin{equation}\label{eq:effective_local_taylor}
    D[f]\ge \frac{\lambda}{3}d(f,\mathcal{O})^2.
\end{equation}
An explicit choice of $K_{q,r}$ is
\begin{equation}\label{eq:K_explicit}
    \begin{split}
        K_{q,r}
        ={}&
        2\left(
            (r-2)(r+1)
            +3|q-r|(r-1)
            +|q-r||q-2r|
        \right)\\
        &+
        6(q-2)\left(r-1+|q-r|\right)
        +
        4(q-2)(q-1).
    \end{split}
\end{equation}
\end{lemma}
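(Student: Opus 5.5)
The plan is a Taylor expansion with integral remainder around a metric projection onto $\mathcal O$. Take $f$ with $d(f,\mathcal O)\le\eta\|f\|$. Since $\eta\le 1/\sqrt5<1$ and $G$ vanishes at infinity, minimizing sequences for $g\mapsto\|f-g\|$ over $\mathcal O$ cannot escape: the parameter $\boldsymbol\theta$ stays bounded, and $|c|$ is bounded away from $0$ and $\infty$ because $\|f-c\Gamma h_0\|<\|f\|$. So a minimizer $g=c\Gamma_{\boldsymbol\theta}h_0$ exists. By homogeneity and invariance we may assume $g$ is a positive multiple of $h_0$, and then rescale so that $g=h_0$ with $\|h_0\|$ fixed. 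First-order optimality gives $h:=f-h_0\perp T_{h_0}\mathcal O$, with $\|h\|=d(f,\mathcal O)$. Since $h_0\in T_{h_0}\mathcal O$, Pythagoras gives $\|f\|^2=\|h_0\|^2+\|h\|^2$. The condition $\|h\|\le\eta\|f\|$ then becomes $\|h\|^2\le\frac{\eta^2}{1-\eta^2}\|h_0\|^2$, so in particular $\|h\|\le\frac12\|h_0\|$.

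Next I expand $\phi(\varepsilon):=D[h_0+\varepsilon h]$ on $[0,1]$. We have $\phi(0)=0$, and $\phi'(0)=0$ because $h_0$ maximizes. By (ii), $\phi''(0)\ge2\lambda\|h\|^2$. Taylor's theorem gives
\[
D[f]\ge\lambda\|h\|^2-\tfrac16\sup_{0\le\varepsilon\le1}|D'''[h_0+\varepsilon h](h,h,h)|.
\]
To bound the remainder I use \eqref{eq:third_derivative_bound}, together with $\|Sh\|\le\|S\|\|h\|$ and the lower bound $\|S(h_0+\varepsilon h)\|\ge\|S\|\|h_0\|-\|S\|\|h\|$. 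This yields
\[
|D'''|\le K_{q,r}\|S\|^2\frac{\|h\|^3}{\|h_0\|-\|h\|}.
\]
Using $\|h\|\le\eta'\|h_0\|$ with $\eta'=\eta/\sqrt{1-\eta^2}$, the remainder is at most $\frac{K_{q,r}\|S\|^2\eta'}{6(1-\eta')}\|h\|^2$. The second constraint in \eqref{eq:eta_in_terms_of_third_der} is arranged exactly so that $K_{q,r}\|S\|^2\eta'\le2\lambda$, since $\eta'=\frac{2\lambda}{K_{q,r}\|S\|^2}$ at equality. The first constraint gives $\eta'\le\frac12$. Together these bound the remainder by $\frac{2\lambda}{6}\cdot2\|h\|^2=\frac{2\lambda}{3}\|h\|^2$, which leaves $D[f]\ge\frac\lambda3\|h\|^2$, as claimed.

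The last step, which I expect to be the main obstacle, is the explicit $K_{q,r}$. I would write $D[h+\varepsilon g]=\|S\|^2\|h+\varepsilon g\|^2-N(\varepsilon)^{2/q}$, where
\[
N(\varepsilon)=\int\Big(\int|u+\varepsilon v|^r\,\d x\Big)^{q/r}\d t,\qquad u=Sh,\ v=Sg.
\]
The norm term is quadratic in $\varepsilon$, so only $N^{2/q}$ contributes to the third derivative. I would differentiate $|u+\varepsilon v|^r$ up to three times, bounding each derivative pointwise by $r|u|^{r-1}|v|$, $r(r-1)|u|^{r-2}|v|^2$ and $r(r-1)(r-2)|u|^{r-3}|v|^3$ (the last is where $r>3$ enters). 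Then I apply the chain rule through the power $q/r$ and Hölder in $x$ and $t$. This turns every term into $\|u\|^{q-3}\|v\|^3$ times a combinatorial coefficient, and the factors $|q-r|$ and $|q-2r|$ arise from derivatives of $y^{q/r}$. Finally, the outer power $2/q$ is handled by Faà di Bruno, which produces the $(q-2)$ and $(q-2)(q-1)$ terms after normalizing by $N^{3/q}$ and taking absolute values. I would not insist on matching \eqref{eq:K_explicit} term by term: any admissible constant of this form suffices, and bookkeeping of the coefficients is the only delicate part.
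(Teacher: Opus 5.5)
Your proposal is correct and follows the paper's proof essentially step for step. The shared steps are: the metric projection (the paper imports its existence from \cite{GN22}, while you sketch the compactness argument; set aside $f=0$ first, where no projection exists), the reduction to $f=h_0+f_\bot$ with $f_\bot\perp T_{h_0}\mathcal O$, the Lagrange remainder controlled through $\lVert S(h_0+\theta f_\bot)\rVert_{L^qL^r}\ge\frac12\lVert S\rVert\lVert h_0\rVert$, the same use of both constraints defining $\eta$, and the same chain-rule/H\"older/Fa\`a di Bruno bookkeeping for $K_{q,r}$.

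For the last claim of the lemma, do not settle for ``some admissible constant''. Your pointwise bound $r(r-1)(r-2)\lvert u\rvert^{r-3}\lvert v\rvert^3$ is valid for $r\ge3$ because $\lvert 3c+(r-4)c^3\rvert\le r-1$ on $[-1,1]$; note that the naive triangle inequality does not give this when $3<r<4$. Carried through your scheme, it yields exactly \eqref{eq:K_explicit} with $(r-2)(r+1)$ replaced by the smaller $(r-1)(r-2)$, so the stated $K_{q,r}$ is admissible a fortiori.
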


\begin{proof}
    The conclusion is immediate if $f=0$, so suppose that $f$ is nonzero.
    Since $\eta<1$, the strict inequality $d(f,\mathcal{O})\le\eta\lVert f\rVert<\lVert f\rVert$ holds.
    Now, as observed in~\cite[Theorem~2.1]{GN22}, this strict inequality implies that there exists at least one metric projection $P(f)$ of $f$ onto $\mathcal{O}$, i.e., $f=P(f)+f_\bot$ with $\lVert f_\bot\rVert=d(f, \mathcal{O})$. In particular $P(f)$ is nonzero. Thus, after applying an element of $G$ and a phase rotation, both of which leave the deficit and the distance unchanged, we may assume 
    \begin{equation}\label{eq:f_h_zero_fbot}
        f=ch_0+f_\bot,
        \qquad
        c>0,
        \qquad
        f_\bot\perp T_{h_0}\mathcal{O},
    \end{equation}
    with
    $
        \lVert f_\bot\rVert=d(f,\mathcal{O}).
    $
    Since $h_0\in T_{h_0}\mathcal{O}$, we also have $f_\bot\perp h_0$, and hence
    \begin{equation}\label{eq:c_in_terms_of_distance}
        c^2\lVert h_0\rVert^2
        =
        \lVert f\rVert^2-d(f,\mathcal{O})^2.
    \end{equation}
    
    By construction, $D[h_0]=0$. Actually, $h_0$ is a global minimizer of $D$ and so $D'[h_0]=0$. Taylor expansion to second order therefore yields
    \begin{equation}\label{eq:second_order_taylor}
        \begin{split}
            D[f]
            =
            c^2D\left[h_0+\frac{f_\bot}{c}\right]
            &=
            c^2\left(
                \frac{D''[h_0](f_\bot,f_\bot)}{2c^2}
                +
                R\left(\frac{f_\bot}{c}\right)
            \right) \\
            &\ge
            \lambda\lVert f_\bot\rVert^2
            +
            c^2R\left(\frac{f_\bot}{c}\right),
        \end{split}
    \end{equation}
    for some remainder  $R$. By the Lagrange form of the remainder, there exists $\theta\in(0,1)$ such that
    \begin{equation*}
        R(g)
        =
        \frac{1}{6}D'''[h_0+\theta g](g,g,g),
    \end{equation*}
    where we wrote $g=f_\bot/c$.
    By the definition of $K_{q,r}$, we may upper bound the remainder as follows:
    \begin{equation}\label{eq:first_bound}
        \begin{split}
            \lvert R(g)\rvert
            \le
            \frac{K_{q,r}}{6}
            \frac{\lVert Sg\rVert_{L^qL^r}^3}
                 {\lVert Sh_0+\theta Sg\rVert_{L^qL^r}}
            .
        \end{split}
    \end{equation}
    Using~\eqref{eq:c_in_terms_of_distance} and $d(f, \mathcal{O})\le \eta \lVert f\rVert$, the condition $\eta\le \frac1{\sqrt 5}$ implies that
    \begin{equation*}
        \frac{\lVert g\rVert}{\lVert h_0\rVert}
        =
        \frac{d(f,\mathcal{O})}
             {\sqrt{\lVert f\rVert^2-d(f,\mathcal{O})^2}}
        \le
        \frac{\eta}{\sqrt{1-\eta^2}}
        \le \frac12.
    \end{equation*}
    Consequently, for every $\theta\in[0,1]$,
    \begin{equation*}
        \begin{split}
            \lVert Sh_0+\theta Sg\rVert_{L^qL^r}
            &\ge
            \lVert Sh_0\rVert_{L^qL^r}
            -\theta\lVert Sg\rVert_{L^qL^r} \\
            &\ge
            \lVert S\rVert\bigl(\lVert h_0\rVert-\lVert g\rVert\bigr)
            \ge
            \frac12\lVert S\rVert\lVert h_0\rVert,
        \end{split}
    \end{equation*}
    since $\lVert Sh_0\rVert_{L^qL^r}=\lVert S\rVert\lVert h_0\rVert$. Plugging this back into~\eqref{eq:first_bound}, we obtain  $\lvert R(g)\rvert \le
            \frac{K_{q,r}\lVert S\rVert^2}{3}
            \frac{\lVert g\rVert^3}{\lVert h_0\rVert}$, and therefore~\eqref{eq:second_order_taylor} yields
    \begin{equation*}
        D[f]
        \ge
        \lambda\lVert f_\bot\rVert^2
        -
        \frac{K_{q,r}\lVert S\rVert^2}{3}
        \frac{\lVert f_\bot\rVert^3}
             {c\lVert h_0\rVert}.
    \end{equation*}
    Using~\eqref{eq:c_in_terms_of_distance} a second time together with $\lVert f_\bot\rVert=d(f,\mathcal{O})$, we infer the lower bound
    \begin{equation}\label{eq:bound_below_Df}
        D[f]
        \ge
        \left(
            \lambda
            -
            \frac{K_{q,r}\lVert S\rVert^2}{3}
            \frac{d(f,\mathcal{O})}
                 {\sqrt{\lVert f\rVert^2-d(f,\mathcal{O})^2}}
        \right)d(f,\mathcal{O})^2,
    \end{equation}
    which, again by $d(f,\mathcal{O})\le\eta\lVert f\rVert$, yields via~\eqref{eq:eta_in_terms_of_third_der},
    \begin{equation*}
        \frac{K_{q,r}\lVert S\rVert^2}{3}
        \frac{\eta}{\sqrt{1-\eta^2}}
        \le
        \frac{2\lambda}{3}.
    \end{equation*}
    We conclude that the factor in parentheses in~\eqref{eq:bound_below_Df} is at least $\frac{\lambda}3$, proving \eqref{eq:effective_local_taylor}.

    It remains  to obtain an explicit admissible value for $K_{q,r}$. The first summand in $D$ is quadratic so its third derivative vanishes, and hence
    \begin{equation*}
        -D'''[h](g,g,g)
        =
        \left.
            \partial_\varepsilon^3\right|_{\varepsilon=0}
            \lVert S(h+\varepsilon g)\rVert_{L^qL^r}^2
        .
    \end{equation*}
    Letting 
    $
        A(\varepsilon):=\lVert Sh+\varepsilon Sg\rVert_{L^qL^r}^q,
    $
    it holds that
    \begin{equation}\label{eq:third_derivative_computed}\notag
        \begin{split}
            -D'''[h](g,g,g)
            ={}&
            \frac{2}{q}A(0)^{\frac{2}{q}-1}A'''(0)\\
            &+
            3\frac{2}{q}
            \left(\frac{2}{q}-1\right)
            A(0)^{\frac{2}{q}-2}A'(0)A''(0)\\
            &+
            \frac{2}{q}
            \left(\frac{2}{q}-1\right)
            \left(\frac{2}{q}-2\right)
            A(0)^{\frac{2}{q}-3}A'(0)^3.
        \end{split}
    \end{equation}
    Differentiation under the integral sign and an application of Hölder's inequality together yield \begin{equation}\label{eq_Homogenous}
        \lvert A^{(j)}(0)\rvert
        \le
        a_j(q,r)
        \lVert Sh\rVert_{L^qL^r}^{q-j}
        \lVert Sg\rVert_{L^qL^r}^{j},
        \qquad
        j\in\{1,2,3\},
    \end{equation}
    for some coefficients $a_j(q,r)>0$ to be specified below. The powers on the right-hand side of \eqref{eq_Homogenous} are forced by homogeneity, as we are differentiating the $q$-homogeneous expression $\lVert Sh+\varepsilon Sg\rVert_{L^qL^r}^q$. We  infer that we can take
    \begin{equation}\label{eq:Kqr_function_aj}
        K_{q,r}
        :=
        \frac{2}{q}a_3(q,r)
        +
        \frac{6(q-2)}{q^2}a_1(q,r)a_2(q,r)
        +
        \frac{4(q-2)(q-1)}{q^3}a_1(q,r)^3.
    \end{equation}
    
    We  turn to the computation of the constants $a_j(q,r)$ for $j\in\{1,2,3\}$. Observe that 
    \begin{equation*}
        A'(0)=q\lVert Sh\rVert_{L^qL^r}^{q-1}\left. \partial_\varepsilon\right|_{\varepsilon=0} \lVert Sh +  \varepsilon Sg\rVert_{L^qL^r}\le q\lVert Sh\rVert_{L^qL^r}^{q-1}\lVert Sg\rVert_{L^qL^r}
    \end{equation*}
    because $\lVert\cdot\rVert_{L^qL^r}$ is  1-Lipschitz, so we may take $a_1(q, r):=q$. To compute $a_2(q,r)$, direct differentiation yields
    \begin{equation*}
        \begin{split}
            A''(0)
            ={}&
            q(r-2)\int_{-\infty}^\infty
            \lVert Sh(t)\rVert_{L^r}^{q-r}
            \int_{\mathbb R^d}
            |Sh|^{r-4}
            \bigl(\Re(\overline{Sh}\,Sg)\bigr)^2
            \,\d \boldsymbol{x}\,\d t\\
            &+
            q\int_{-\infty}^\infty
            \lVert Sh(t)\rVert_{L^r}^{q-r}
            \int_{\mathbb R^d}
            |Sh|^{r-2}|Sg|^2
            \,\d \boldsymbol{x}\,\d t\\
            &+
            q(q-r)\int_{-\infty}^\infty
            \lVert Sh(t)\rVert_{L^r}^{q-2r}
            \left(
                \int_{\mathbb R^d}
                |Sh|^{r-2}\Re(\overline{Sh}\,Sg)
                \,\d \boldsymbol{x}
            \right)^2 \d t.
        \end{split}
    \end{equation*}
    By Hölder's inequality, each integral is bounded by $\lVert Sh\rVert_{L^qL^r}^{q-2}
        \lVert Sg\rVert_{L^qL^r}^2$,
    so
    \begin{equation*}
        |A''(0)|
        \le
        q\bigl(r-1+|q-r|\bigr)
        \lVert Sh\rVert_{L^qL^r}^{q-2}
        \lVert Sg\rVert_{L^qL^r}^2,
    \end{equation*}
    and we may take
    $
        a_2(q,r):=q\bigl(r-1+|q-r|\bigr).
    $
    We now turn to $a_3(q, r)$. To compute $A'''(0)$, introduce
    \begin{equation*}
        B(\varepsilon,t)
        :=
        \int_{\mathbb R^d}
        |Sh(t,\boldsymbol{x})+\varepsilon Sg(t,\boldsymbol{x})|^r\,\d \boldsymbol{x}.
    \end{equation*}
    Then $A(\varepsilon)=\int_{-\infty}^\infty B(\varepsilon,t)^{q/r}\,\d t$, so differentiation yields
    \begin{equation*}
        \begin{split}
            B'''(0,t)
            ={}&
            r(r-2)(r-4)
            \int_{\mathbb R^d}
            |Sh|^{r-6}
            \bigl(\Re(\overline{Sh}\,Sg)\bigr)^3\,\d\boldsymbol{x}\\
            &+
            3r(r-2)
            \int_{\mathbb R^d}
            |Sh|^{r-4}
            \Re(\overline{Sh}\,Sg)|Sg|^2\,\d \boldsymbol{x}.
        \end{split}
    \end{equation*}
    Thus, by Hölder's inequality,
    \begin{equation*}
        \begin{split}
            |B'(0,t)|
            &\le
            r\lVert Sh(t)\rVert_{L^r}^{r-1}
            \lVert Sg(t)\rVert_{L^r},\\
            |B''(0,t)|
            &\le
            r(r-1)\lVert Sh(t)\rVert_{L^r}^{r-2}
            \lVert Sg(t)\rVert_{L^r}^2,\\
            |B'''(0,t)|
            &\le
            r(r-2)(r+1)\lVert Sh(t)\rVert_{L^r}^{r-3}
            \lVert Sg(t)\rVert_{L^r}^3.
        \end{split}
    \end{equation*}
    Moreover,
    \begin{equation*}
    \begin{split}
        A'''(0)
        ={}&
        \frac qr\int_{-\infty}^\infty
        B(0,t)^{\frac qr-1}B'''(0,t)\,\d t\\
        &+
        3\frac qr\left(\frac qr-1\right)
        \int_{-\infty}^\infty
        B(0,t)^{\frac qr-2}B'(0,t)B''(0,t)\,\d t\\
        &+
        \frac qr\left(\frac qr-1\right)
        \left(\frac qr-2\right)
        \int_{-\infty}^\infty
        B(0,t)^{\frac qr-3}B'(0,t)^3\,\d t.
    \end{split}
    \end{equation*}
    Applying Hölder's inequality one last time yields
    \begin{equation*}
        |A'''(0)|
        \le
        a_3(q,r)
        \lVert Sh\rVert_{L^qL^r}^{q-3}
        \lVert Sg\rVert_{L^qL^r}^3,
    \end{equation*}
    and thus we may take
    \begin{equation*}
        a_3(q,r)
        :=
        q\left(
            (r-2)(r+1)
            +3|q-r|(r-1)
            +|q-r||q-2r|
        \right).
    \end{equation*}
    Inserting these values for $a_j(q, r)$ into~\eqref{eq:Kqr_function_aj} reveals that \eqref{eq:K_explicit} is indeed a valid choice for $K_{q, r}$. This concludes the proof of the lemma. 
\end{proof}

We now come to the proof of  Theorem~\ref{thm:abstract_stability}. Let $f\in\mathcal H\setminus\{0\}$. Since $\Phi_0=\operatorname{Id}$, property~(i) forces $\overline{\mathcal O}\subseteq M$. Conversely, if $f\in M\setminus\overline{\mathcal O}$, the fact that $D[\Phi_s(f)]$ strict decreases in $s$ yields $D[\Phi_s(f)]<D[f]=0$ for sufficiently small $s>0$.
This contradicts $D\ge0$,  hence $M=\overline{\mathcal O}$. In particular,
$d(f,M)=d(f,\overline{\mathcal O})=d(f,\mathcal O)$.
The conclusion of the theorem is immediate if $d(f,M)=0$, so suppose  $d(f,M)>0$. If  $d(f, M)\le \eta \lVert f\rVert$, then by Lemma~\ref{lem:third_estimate} we have
\begin{equation}\label{eq:obvious_close_estimate}
    \frac{D[f]}{d^2(f, M)}\ge \frac{\lambda}{3},
\end{equation}
and there is nothing to prove. So suppose that $f$ is far from $M$ in the sense that 
$    d(f, M)>\eta \lVert f\rVert.$
By property~(i), there exists $s$ such that
\begin{equation}\label{eq:f_can_be_made_close}
    d(\Phi_s(f), M)=\eta \left\lVert \Phi_s(f)\right\rVert.
\end{equation}
Since $d(f,M)\le\lVert f\rVert$, the norm preservation, the flow  monotonicity, Lemma~\ref{lem:third_estimate} and identity
\eqref{eq:f_can_be_made_close} together yield
\begin{equation}\label{eq:proof_explicit_stability}
    \begin{split}
        \frac{D[f]}{d(f,M)^2}
        \ge \frac{D[f]}{\lVert f\rVert^2}
        \ge \frac{D[\Phi_s(f)]}{\lVert\Phi_s(f)\rVert^2}
        \ge \frac{\lambda}{3}\frac{d(\Phi_s(f),M)^2}{\lVert\Phi_s(f)\rVert^2}
        =\frac{\lambda\eta^2}{3}.
    \end{split}
\end{equation}
This establishes~\eqref{eq:explicit_stability}, and concludes the proof of the theorem.

\subsection{Proof of Theorem~\ref{thm:effective_stability}}

Now that Theorem~\ref{thm:abstract_stability} is established, it remains to apply it to prove Theorem~\ref{thm:effective_stability}. The deficit functionals under consideration are 
\begin{equation*}
    \begin{split}
        \delta_{6,6}[f]
        &=
        \frac{1}{\sqrt[6]{3}}\lVert f\rVert_{L^2(\mathbb R)}^2
        -
        \lVert e^{-it\Delta/2}f\rVert_{L^6_tL^6_x(\R^{1+1})}^2,\\
        \delta_{8,4}[f]
        &=
        \frac{1}{\sqrt[4]{2}}\lVert f\rVert_{L^2(\mathbb R)}^2
        -
        \lVert e^{-it\Delta/2}f\rVert_{L^8_tL^4_x(\R^{1+1})}^2,\\
        \delta_{4,4}[f]
        &=
        \frac{1}{\sqrt{2}}\lVert f\rVert_{L^2(\mathbb R^2)}^2
        -
        \lVert e^{-it\Delta/2}f\rVert_{L^4_tL^4_{\boldsymbol{x}}(\R^{1+2})}^2.
    \end{split}
\end{equation*}
The corresponding spectral gaps $\Lambda_{q,r}(d)$ were recorded in~\eqref{eq:spectral_gaps_concrete}. The operator norms of $S=e^{-it\Delta/2}$, defined  in  \eqref{eq_Strichartz}, are
\begin{equation*}
    {\bf S}_{6,6}^2=\frac{1}{\sqrt[6]{3}},
    \qquad
    {\bf S}_{8,4}^2=\frac{1}{\sqrt[4]{2}},
    \qquad
    {\bf S}_{4,4}^2=\frac{1}{\sqrt{2}};
\end{equation*}
see \S\ref{sec_context}.
Finally, the explicit formula~\eqref{eq:K_explicit} yields
\begin{equation*}
    K_{6,6}=256,
    \qquad
    K_{8,4}=512,
    \qquad
    K_{4,4}=80.
\end{equation*}
Inserting these values into~\eqref{eq:eta_in_terms_of_third_der} yields
\begin{equation*}
    \begin{split}
        \eta_{6,6}
        &=
        \min\left\{
            \frac{1}{\sqrt{5}},
            \frac{2\Lambda_{6,6}(1)}
            {\sqrt{K_{6,6}^2{\bf S}_{6,6}^4
            +4\Lambda_{6,6}(1)^2}}
        \right\}
        =
        \frac{1}{\sqrt{331777}},\\
        \eta_{8,4}
        &=
        \min\left\{
            \frac{1}{\sqrt{5}},
            \frac{2\Lambda_{8,4}(1)}
            {\sqrt{K_{8,4}^2{\bf S}_{8,4}^4
            +4\Lambda_{8,4}(1)^2}}
        \right\}
        =
        \frac{3}{\sqrt{4194313}},\\
        \eta_{4,4}
        &=
        \min\left\{
            \frac{1}{\sqrt{5}},
            \frac{2\Lambda_{4,4}(2)}
            {\sqrt{K_{4,4}^2{\bf S}_{4,4}^4
            +4\Lambda_{4,4}(2)^2}}
        \right\}
        =
        \frac{1}{\sqrt{25601}}.
    \end{split}
\end{equation*}
Consequently,
\begin{equation*}
    \frac{\Lambda_{6,6}(1)\eta_{6,6}^2}{3}
    =
    \frac{2}{8957979\sqrt[6]{3}},
    \qquad
    \frac{\Lambda_{8,4}(1)\eta_{8,4}^2}{3}
    =
    \frac{9}{33554504\sqrt[4]{2}},
\end{equation*}
and
\begin{equation*}
    \frac{\Lambda_{4,4}(2)\eta_{4,4}^2}{3}
    =
    \frac{1}{307212\sqrt{2}},
\end{equation*}
as prescribed in the statement of Theorem~\ref{thm:effective_stability}.
In order to apply Theorem~\ref{thm:abstract_stability}, we start by
noting that, in all cases under consideration, $q$ and $r$ are even integers, which forces
\begin{equation*}
    \lVert e^{-it\Delta/2}f\rVert_{L^q_tL^r_{\boldsymbol{x}}(\R^{1+d})}
    \leq
    \lVert e^{-it\Delta/2}\lvert f\rvert\rVert_{L^q_tL^r_{\boldsymbol{x}}(\R^{1+d})},
\end{equation*}
and so
$\delta_{q,r}[f]\geq\delta_{q,r}[\lvert f\rvert]$.
Moreover, $r$ divides $q$, which puts us in a position to exploit the
heat-flow monotonicity from~\cite{BBCH09}. More precisely, set
\begin{equation*}
    \Phi_s(f):=\sqrt{e^{s\Delta}\lvert f\rvert^2}.
\end{equation*}
Then $\Phi_s$ preserves the $L^2$-norm, and the map
\begin{equation*}
    s\mapsto
    \lVert
        e^{-it\Delta/2}\Phi_s(f)
    \rVert_{L^q_tL^r_{\boldsymbol{x}}(\R^{1+d})}
\end{equation*}
is nondecreasing. The equality cases and the limiting behavior
described in~\cite{BBCH09} show that the corresponding deficit is
strictly decreasing away from the manifold $\Gg,$ and that
$d(\Phi_s(f),\Gg)\to0$, as $s\to\infty$. Thus $\Phi_s$ satisfies
property~{\rm (i)} of Theorem~\ref{thm:abstract_stability}, except
that $\Phi_0(f)=\lvert f\rvert$, so that $\Phi_0$ is the identity only
on the cone of nonnegative functions. Applying the proof of
Theorem~\ref{thm:abstract_stability} on this cone, with initial datum
$\lvert f\rvert$, yields the desired conclusion.

\section*{Acknowledgements}

FG  acknowledges support from the following funding agencies: The Office of Naval Research GRANT14201749 (award number N629092412126), The Serrapilheira Institute (Serra-2211-41824), FAPERJ (E-26/200.209/2023 and E-26/210.245/2024) and CNPq (309910/2023-4).
GN and DOS were funded by FCT/Portugal and the Recovery and Resilience Plan (PRR) through projects UID/04459/2025 and UID/PRR/04459/2025, and by the project 2023.17881.ICDT (SHADE).
The authors are grateful to René Quilodrán for inspiring discussions regarding the present work.

The authors acknowledge the use of AI tools for computational assistance and proofreading. All mathematical ideas, arguments, and proofs
in this work were developed, checked, and written by the authors.

\appendix

\section{Symmetry group and gaussians}\label{app_symmetry}
Let $u(t,\boldsymbol{x})=e^{-it\Delta/2}f(\boldsymbol{x})$ denote the solution to the Schr\"odinger initial
value problem \eqref{eq_freeSchr}. The following transformations map $u$ into another
solution $v(t,\boldsymbol{x})=e^{-it\Delta/2}g(\boldsymbol{x})$ of \eqref{eq_freeSchr}.
\begin{equation}\label{eq_symmetry_list}
    \begin{array}{lll}
        v(t,\boldsymbol{x})=u(t+t_0,\boldsymbol{x}), &
        g(\boldsymbol{x})=e^{-it_0\Delta/2}f(\boldsymbol{x}), &
        t_0\in\R, \\[0.4em]
        v(t,\boldsymbol{x})=u(t,\boldsymbol{x}+\boldsymbol{x}_0), &
        g(\boldsymbol{x})=f(\boldsymbol{x}+\boldsymbol{x}_0), &
        \boldsymbol{x}_0\in\R^d, \\[0.4em]
        v(t,\boldsymbol{x})=u(\lambda^2 t,\lambda \boldsymbol{x}), &
        g(\boldsymbol{x})=f(\lambda \boldsymbol{x}), &
        \lambda>0, \\[0.4em]
        v(t,\boldsymbol{x})=
        e^{i(\boldsymbol{x}\cdot\boldsymbol{\xi}_0+\frac t2|\boldsymbol{\xi}_0|^2)}
        u(t,\boldsymbol{x}+t\boldsymbol{\xi}_0), &
        g(\boldsymbol{x})=e^{i\boldsymbol{x}\cdot\boldsymbol{\xi}_0}f(\boldsymbol{x}), &
        \boldsymbol{\xi}_0\in\R^d, \\[0.4em]
        v(t,\boldsymbol{x})=cu(t,\boldsymbol{x}), &
        g(\boldsymbol{x})=cf(\boldsymbol{x}), &
        c\in\Co\setminus\{0\}.
    \end{array}
\end{equation}
These transformations generate a Lie group of real dimension $2d+4$, and
\eqref{eq_Strichartz} is invariant under all of them.
Applying \eqref{eq_symmetry_list} to the standard gaussian
$g_0(\boldsymbol{x})=e^{-|\boldsymbol{x}|^2/2}$ produces exactly the family \eqref{eq_gaussian_family_intro}. Indeed,
\begin{equation}\label{eq_PropGauss}
(e^{-it\Delta/2}g_0)(\boldsymbol{x})
=
\frac{1}{(1-it)^{d/2}}
e^{-\frac{|\boldsymbol{x}|^2}{2(1-it)}},
\end{equation}
so combining time translation and scaling yields
\[
\begin{aligned}
(e^{-it_0\Delta/2}g_0)(\lambda \boldsymbol{x})
&=
\frac{1}{(1-it_0)^{d/2}}
\exp\!\left(-\frac{\lambda^2|\boldsymbol{x}|^2}{2(1-it_0)}\right)\\
&=
\frac{1}{(1-it_0)^{d/2}}
\exp\!\left(
-\left(
\frac{\lambda^2}{2(1+t_0^2)}
+i\frac{\lambda^2t_0}{2(1+t_0^2)}
\right)|\boldsymbol{x}|^2
\right),
\end{aligned}
\]
whose quadratic coefficient has positive real part. Spatial translations,
modulations, and multiplication by constants then give precisely
\[ \Gg
    =
    \left\{
    c\,e^{-z|\boldsymbol x-\boldsymbol x_0|^2}e^{i\boldsymbol x\cdot \boldsymbol\xi_0}
    :
    c\in\Co\setminus\{0\},\;
    \Re z>0,\;
    \boldsymbol x_0,\boldsymbol\xi_0\in\R^d
    \right\}.\]
To compute the tangent space at $g_0$, consider the parametrization
\[
\Phi(c,z,\boldsymbol{x}_0,\boldsymbol{\xi}_0)(\boldsymbol{x})
:=c\,e^{-z|\boldsymbol{x}-\boldsymbol{x}_0|^2}e^{i\boldsymbol{x}\cdot\boldsymbol{\xi}_0}.
\]
At $(c,z,\boldsymbol{x}_0,\boldsymbol{\xi}_0)=(1,\frac12,0,0)$, its differential over the real parameters is
\[
D\Phi(\dot c,\dot z,\dot{ \boldsymbol{x}}_0,\dot{\boldsymbol{\xi}}_0)
=
\bigl(\dot c-\dot z|\boldsymbol{x}|^2+\boldsymbol{x}\cdot\dot{\boldsymbol{x}}_0
+i\boldsymbol{x}\cdot\dot{\boldsymbol{\xi}}_0\bigr)g_0,
\]
where $\dot c,\dot z\in\Co$ and $\dot {\boldsymbol{x}}_0,\dot{\boldsymbol{\xi}}_0\in\R^d$. Therefore
\[
T_{g_0}\Gg
=
\operatorname*{span}_{\Co}
\{g_0,x_1g_0,\ldots,x_dg_0,|\boldsymbol{x}|^2g_0\},
\]
{as claimed.


\section{PARI/GP code for the polynomial inequalities}\label{app_code}

\begin{verbatim}

/* x=lambda, nu=d/2-1, F(a,k)=F_{k,a}(x) */
F(a,k)=sum(j=0,k,binomial(k+a,k-j)*binomial(k,j)*x^(2*j)*(1-x)^(2*k-2*j));
p(k,m)=F(nu+m,k)*x^(m-1)+\
((nu+2)/(nu+1)-1/x)*(1-x)^(2*k-1)*binomial(nu+k,k)*(m==0);

/* three-term recurrence */
X=(x^2+(1-x)^2)/(1-2*x);
A(a,n)=(1-2*x)*(2*n+a)*(2*n+a-1)/(2*n*(n+a));
B(a,n)=(1-2*x)*a^2*(2*n+a-1)/(2*n*(n+a)*(2*n+a-2));
C(a,n)=(1-2*x)^2*(2*n+a)*(n-1)*(n+a-1)/(n*(n+a)*(2*n+a-2));

vector(10,n,F(a,n+1)==(A(a,n+1)*X+B(a,n+1))*F(a,n)\
-C(a,n+1)*F(a,n-1))==vector(10,n,1)

/* induction computations */
T(a,n)=(a+2*n)*(a^2+a+2*n-2)*x-a^2*(a+2*n-1);

A(a,n)*X+B(a,n)-C(a,n)==\
1-(1-x)*T(a,n)/(n*(a+2*n-2)*(a+n))

subst(T(a,n),x,a/(a+2))==4*n*(n-1)*a/(a+2)

subst(T(a,n),x,(a+1)/(a+3))==\
(2*a^2+(4*n^2-2)*(a+1/2)+2*n^2+1-4*n)/(a+3)

subst(subst(T(a,n),x,(a+1)/(a+3)),n,2)==\
(2*a^2+14*a+8)/(a+3)

/* dimensions d=1,2 */
(1-p(1,1))/(1-x)==(nu+3)*x-nu-1

subst((p(1,1)-p(2,0))/(1-x),nu,-1/2)==\
13/4*x^2-3*x+3/4

poldisc(13/4*x^2-3*x+3/4)==-3/4

subst((p(1,1)-p(2,0))/(1-x),nu,0)==(2*x-1)^2

p(0,2)==x

/* dimensions d=1,...,5 */
(1-p(2,0))/(1-x)==\
(3*nu/2+4)*x^2-(nu+1)*x-nu/2

subst((1-p(2,0))/(1-x),nu,-1/2)==\
13/4*x^2-x/2+1/4

subst((1-p(2,0))/(1-x),nu,0)==\
4*x*(x-1/4)

subst((1-p(2,0))/(1-x),nu,1/2)==\
lift(Mod(19^2/76*(x-(3+2*z)/19)*(x-(3-2*z)/19),z^2-7))

subst((1-p(2,0))/(1-x),nu,1)==\
lift(Mod(11^2/22*(x-(2+z)/11)*(x-(2-z)/11),z^2-15))

subst((1-p(2,0))/(1-x),nu,3/2)==\
25/4*(x-3/5)*(x+1/5)

/* remaining claim for d=1,...,9 */
g1=(x*p(1,1)-F(nu,4))/(1-x);
g2=(p(2,0)-p(3,0))/(1-x);

subst(g1,x,1)==5
subst(g2,x,1)==2

/* Sturm is needed for g1 only when d=1,...,4 */
apply(d->polsturm(subst(g1,nu,d/2-1),\
max((d-2)/d,d/(d+4)),oo),[1..4])==vector(4,j,0)

/* for g1 and d=5,...,9, the shifted coefficients are positive */
apply(d->vecmin(Vec(substvec(g1,[nu,x],\
[d/2-1,y+(d-2)/d])))>0,[5..9])==vector(5,j,1)

/* Sturm is needed for g2 only when d=1,...,8 */
apply(d->polsturm(subst(g2,nu,d/2-1),\
max((d-2)/d,d/(d+4)),oo),[1..8])==vector(8,j,0)

/* for g2 and d=9, the shifted coefficients are positive */
vecmin(Vec(substvec(g2,[nu,x],[7/2,y+7/9])))>0

/* remaining claim for d>=10 */
Vec(subst(g1,x,y+nu/(nu+1)))==\
[\
(nu^4+26*nu^3+251*nu^2+1066*nu+1680)/24,\
(25*nu^4+426*nu^3+2243*nu^2+2682*nu-5040)/(24*(nu+1)),\
(16*nu^5+437*nu^4+2946*nu^3+3607*nu^2-9598*nu+6960)/(24*(nu+1)^2),\
(208*nu^5+2205*nu^4+3122*nu^3-10929*nu^2+11826*nu-5520)/(24*(nu+1)^3),\
(72*nu^6+1240*nu^5+2787*nu^4-6210*nu^3+11673*nu^2-7114*nu+2784)/(24*(nu+1)^4),\
(336*nu^6+1208*nu^5-2957*nu^4+4782*nu^3-5911*nu^2+2182*nu-936)/(24*(nu+1)^5),\
(72*nu^7+480*nu^6-352*nu^5+1871*nu^4-1090*nu^3+\
1741*nu^2-202*nu+216)/(24*(nu+1)^6),\
(48*nu^7-48*nu^6+200*nu^5-169*nu^4+198*nu^3-155*nu^2-2*nu-24)/(24*(nu+1)^7)\
]

Vec(subst(g2,x,y+nu/(nu+1)))==\
[\
(4*nu^2+29*nu+54)/3,\
(9*nu^3+76*nu^2+113*nu-186)/(6*(nu+1)),\
(3*nu^3+2*nu^2-41*nu+18)/(nu+1)^2,\
(6*nu^4+27*nu^3-50*nu^2+143*nu-18)/(6*(nu+1)^3),\
(3*nu^4-3*nu^3+13*nu^2-10*nu)/(3*(nu+1)^4)\
]

/* after nu=mu+4, all numerator and denominator coefficients are nonnegative */
apply(c->\
vecmin(Vec(subst(numerator(c),nu,mu+4)))>=0\
&&vecmin(Vec(denominator(c)))>=0,\
concat(\
Vec(subst(g1,x,y+nu/(nu+1))),\
Vec(subst(g2,x,y+nu/(nu+1)))\
))==vector(13,j,1)
\end{verbatim}

\end{document}